\documentclass [11pt, a4paper, twoside, reqno]{amsart}

\usepackage[PS]{diagrams}
\usepackage{enumerate}
\usepackage{hyperref}

\numberwithin{equation}{section}

\linespread{1.1}
\addtolength{\textwidth}{1pt}

\theoremstyle{plain}
\newtheorem{theorem}{Theorem}[subsection]
\newtheorem{proposition}[theorem]{Proposition}
\newtheorem{lemma}[theorem]{Lemma}
\newtheorem{corollary}[theorem]{Corollary}
\newtheorem*{main}{Theorem~\ref{thm:main}}
\newtheorem{assertion}[equation]{}

\theoremstyle{definition}
\newtheorem{definition}[theorem]{Definition}
\newtheorem{remark}[theorem]{Remark}

\newtheorem{numpar}[theorem]{}

\numberwithin{equation}{theorem}


\newcommand{\nbd}{\nobreakdash}
\newcommand{\cone}{\mathrm{cone}\,}
\newcommand{\bR}{{\mathbb R}}
\newcommand{\bZ}{{\mathbb Z}}
\newcommand{\bC}{{\mathbb C}}
\newcommand{\bN}{{\mathbb N}}
\newcommand{\iso}{\cong}
\newcommand{\tensor}{\otimes}
\newcommand{\ie}{{\it i.e.}}
\newcommand{\pre}{\mathrm{Pre}\,}
\newcommand{\qco}{\mathrm{qCoh}\,}
\newcommand{\hco}{\mathrm{hCoh}\,}
\newcommand{\hvect}{\mathrm{hVect}\,}
\newcommand{\perf}{\mathrm{Perf}\,}

\newcommand{\Ch}{\mathrm{Ch}\,}
\newcommand{\RMod}{R\text{-}\mathrm{Mod}}
\newcommand{\con}{\mathrm{con}\,}


\date{}

\begin{document}

\title[$K$\nbd-theory of projective toric schemes]{A splitting result
  for the algebraic $K$\nbd-theory of projective toric schemes}

\author{Thomas H\"uttemann}

\address{Thomas H\"uttemann, Queen's University Belfast, Pure
  Mathematics Research Centre, Belfast BT7~1NN, Northern Ireland, UK}

\email{t.huettemann@qub.ac.uk}

\urladdr{http://huettemann.zzl.org/}

\subjclass[2000]{Primary 18F25; Secondary 14F05, 18F20, 18G35}

\keywords{Toric scheme, quasi-coherent sheaf, perfect complex,
  algebraic $K$-theory}

\thanks{This work was supported by the Engineering and Physical
  Sciences Research Council [grant number EP/H018743/1].}

\begin{abstract}
  Suppose $X$ is a projective toric scheme defined over a ring~$R$ and
  equipped with an ample line bundle~$\mathcal{L}$. We prove that its
  $K$-theory has a direct summand of the form $K(R)^{k+1}$ where $k
  \geq 0$ is minimal such that $\mathcal{L}^{\tensor (-k-1)}$ is {\it
    not\/} acyclic. Using a combinatorial description of
  quasi-coherent sheaves we interpret and prove this result for a
  ring~$R$ which is either commutative, or else left
  \textsc{noether}ian.
\end{abstract}

\maketitle

{\small \tableofcontents
}\goodbreak

\section*{Introduction}

Let~$R$ be a commutative ring with unit. An $n$\nbd-dimensional
polytope $P \subseteq \bR^n$ with integral vertices determines a
projective toric $R$\nbd-scheme~$X_P$ together with a family of line
bundles~$\mathcal{O}_{X_P}(k)$ (a family of quasi-coherent sheaves of
$\mathcal{O}_{X_P}$\nbd-modules which are locally free of
rank~$1$). To 
a given chain complex of quasi-coherent sheaves~$\mathcal{F}_\bullet$
we associate an $R$\nbd-module chain complex $\check \Gamma
(\mathcal{F}_\bullet)$ given by first forming a \textsc{\v Cech}
complex in each chain degree of~$\mathcal{F}_\bullet$, and then taking
the total complex of the resulting twofold chain complex.

Set $n_P = 0$ if $P$~has integral points in its interior; otherwise,
let $n_P \geq 1$ be such that the dilated polytope $(n_P+1) P$ has
lattice points in its interior, but $n_P P$~does not. This number can
be characterised in different ways: $n_P \geq 0$ is minimal among
non-negative integers~$k$ such that $\mathcal{O}_{X_P} (-k-1)$ is not
acyclic; also, $n_P$~is the number of distinct integral roots of the
\textsc{Ehrhart} polynomial of~$P$.

We will show that there is a homotopy equivalence of $K$\nbd-theory
spaces
\[K(X_P) \simeq \underbrace{K(R) \times \ldots \times K(R)}_{n_P+1
  \text{ factors}} \times K(X_P,\, [n_P])\]where $K$\nbd-theory is
defined using perfect complexes of sheaves and modules, and the last
factor on the right denotes the $K$\nbd-theory of those perfect
complexes $\mathcal{F}_\bullet$ of quasi-coherent sheaves on~$X_P$ for
which $\check \Gamma \big( \mathcal{F}_\bullet (k) \big)$ is acyclic
for $0 \leq k \leq n_P$. In fact, we will prove slightly more: by
exploiting a strictly combinatorial viewpoint of sheaves on toric
varieties we can prove the corresponding result for a unital ring~$R$
which is commutative, or else left \textsc{noether}ian.

A corresponding result has been proved by the author in a
``non-linear'' context, replacing modules by topological
spaces~\cite{H-nonlin}.  It must also be pointed out that the
splitting result is, in general, far from optimal: a lot of
$K$\nbd-theoretical information can be left over in the factor
$K(X_P,\, [n_P])$. For example, if $n_P = 0$ (which can be guaranteed
by first replacing~$P$ by its dilate $(n+1)P$) the splitting results
merely gives a version of reduced $K$\nbd-theory. But in the other
extreme, if $P$~is an $n$\nbd-dimensional simplex with volume $1/n!$
then $X_P = \mathbb{P}^n$ is $n$\nbd-dimensional projective space; one
can show that $n_P = n$ and $K(\mathbb{P}^n,\, [n]) \simeq *$ in this
case so that we recover the known splitting results for projective
spaces, generalised to suitable non-commutative ground rings
(Theorem~\ref{thm:k_of_Pn}). There are, of course, intermediate cases;
suffice it to mention that $0/1$\nbd-polytopes do not have interior
lattice points and hence lead to non-trivial splittings with $n_P \geq
1$. Interesting examples of~$0/1$\nbd-polytopes arise as
\textsc{Stanley}'s order and chain polytope associated to finite
posets~\cite{Stanley-popo}.

The paper is divided into three parts. In \S1 we introduce the
combinatorial framework for sheaves on projective toric varieties, and
give a first formulation of the main result,
Theorem~\ref{thm:main}. We also prove that for a {\it commutative\/}
ring~$R$ we recover the usual notions of algebraic geometry. In \S2 we
develop some algebraic geometry from the combinatorial viewpoint,
allowing for a non-commutative ground ring~$R$: we define twisting
sheaves and study the \textsc{\v Cech\/} complex of various of
complexes of sheaves. Of major importance is the finiteness
Theorem~\ref{thm:Cech_finite} which asserts that the \textsc{\v
  Cech\/} complex of a perfect complex is a perfect complex of
$R$\nbd-modules. In the left \textsc{noether}ian case this is quite
straightforward, while the non-\textsc{noether}ian commutative case
requires \textsc{noether}ian approximation (descent). Finally,
\S3~contains a detailed formulation of the main theorem, and its
proof.

We assume some familiarity with basic homological algebra as presented
by \textsc{Weibel} \cite{Weibel-intro}, \textsc{Waldhausen\/}
$K$\nbd-theory \cite{W-Ktheory} and its formulation in an
algebro-geometric setting by \textsc{Thomason} and \textsc{Trobaugh}
\cite{TT-K}. We mention a few conventions used in this paper. Chain
complexes are topologically indexed: differentials lower the degree
by~$1$. If needed, modules are considered as chain complexes
concentrated in chain degree~$0$. The term ``module'' without any
qualification refers to a left module.

\section{The $K$-theory of projective toric varieties}

\subsection{Complexes of sheaves on projective toric varieties}

We start by a combinatorial description of quasi-coherent sheaves on
toric schemes defined by a polytope.

Let $P \subseteq \bR^n$ be an $n$\nbd-dimensional polytope with
integral vertices. Each non-empty face~$F$ of~$P$ gives rise to a cone
(the set of finite linear combinations with non-negative real
coefficients)
\[T^P_F = T_F = \cone \{p-f \,|\, p \in P,\, f \in F\} \ ,\] the {\it
  tangent cone of~$F$}, and hence to an additive monoid $S^P_F = S_F =
T_F \cap \bZ^n$.

\begin{lemma}
  Let $F \subseteq G$ be a pair of non-empty faces of~$P$. Then there
  exists a vector $v_{F,G} \in \bZ^n \cap T_F$ such that
  \[T_G = T_F + \bR v_{F,G} \qquad \text{and} \qquad S_G = S_F + \bZ
  v_{F,G} \ .\] In the former case the symbol~``$+$'' denotes
  \textsc{Minkowski} sum, the set of sums of elements of the indicated
  cones; in the latter, the symbol~``$+$'' denotes the submonoid
  of~$\bZ^n$ generated by sums of elements of the indicated monoids.
\end{lemma}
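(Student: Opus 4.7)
The plan is to construct $v_{F,G}$ as an integer vector in the relative interior of the sub-cone $T_F \cap L_G$ of $L_G$, where $L_G \subseteq \bR^n$ denotes the linear subspace parallel to the affine span of $G$. A concrete choice, which I shall verify, is
\[
v_{F,G} \;:=\; \sum_{g} (g - f),
\]
the sum running over all vertices $g$ of $G$, with $f$ any fixed vertex of $F$. All vertices of $P$ (and hence of its faces) lie in $\bZ^n$, so $v_{F,G} \in \bZ^n$; each summand $g - f$ is a generator of $T_F$ and also lies in $L_G$, whence $v_{F,G} \in T_F \cap L_G$.

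The first structural step is to establish the identity $T_G = T_F + L_G$. The inclusion $T_F + L_G \subseteq T_G$ follows from $T_F \subseteq T_G$ (each generator $p - f$ of $T_F$ with $f \in F \subseteq G$ is likewise a generator of $T_G$) together with $L_G \subseteq T_G$ (for $g, g' \in G$, both $\pm(g - g')$ are cone generators of $T_G$, so the linear span $L_G$ lies in the convex cone $T_G$). Conversely, an arbitrary element of $T_G$ is a non-negative combination $\sum \lambda_i (p_i - g_i)$; upon writing $p_i - g_i = (p_i - f) + (f - g_i)$, this splits as an element of $T_F$ plus a real combination of vectors $f - g_i \in L_G$. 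It follows at once that $T_F \cap L_G$ is full-dimensional in $L_G$, since the vectors $g - f$ span $L_G$. The chosen $v_{F,G}$ moreover lies in the \emph{relative interior} of $T_F \cap L_G = \cone\{g - f \,|\, g \text{ a vertex of } G\}$: any linear functional $\phi$ non-negative on this cone and non-zero on some extreme ray is strictly positive on the adjacent vertex $g^{\star}$, so $\phi(v_{F,G}) \geq \phi(g^{\star} - f) > 0$.

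Both claimed equalities now reduce to the key observation that, for any $l \in L_G$, one has $l + m v_{F,G} \in T_F$ for all sufficiently large $m \in \bN$; indeed $l + m v_{F,G} = m(v_{F,G} + l/m)$, and $v_{F,G} + l/m$ lies in an $L_G$-open neighbourhood of the relative-interior point $v_{F,G}$ once $m$ is large, hence in $T_F \cap L_G \subseteq T_F$. Taking $l$ arbitrary yields $L_G \subseteq T_F + \bR v_{F,G}$, which combined with $T_G = T_F + L_G$ proves $T_G = T_F + \bR v_{F,G}$. For the monoid equality, the inclusion $\supseteq$ is immediate, since $\pm v_{F,G} \in L_G \cap \bZ^n \subseteq T_G \cap \bZ^n = S_G$; for $\subseteq$, given $s \in S_G$, decompose $s = t + l$ with $t \in T_F$ and $l \in L_G$ (both possibly non-integer). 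For $m$ large, $s + m v_{F,G} = t + (l + m v_{F,G})$ lies in $T_F$ by the observation, and integrality of $s$ and $v_{F,G}$ forces $s + m v_{F,G} \in T_F \cap \bZ^n = S_F$, giving $s \in S_F + \bZ v_{F,G}$. The main obstacle is the monoid-level equality: the cone identity is readily arranged, but recovering $S_G$ from $S_F$ via the single rank-one lattice $\bZ v_{F,G}$ requires $v_{F,G}$ to be integer \emph{and} to lie strictly inside $T_F \cap L_G$, so that translation by a sufficiently large integer multiple of $v_{F,G}$ cancels the (generally non-integer) $L_G$-component of an arbitrary $s \in S_G$.
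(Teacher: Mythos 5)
Your proof is correct, and it takes a genuinely different route from the paper's. The paper's proof is a one-line citation to Fulton's Proposition~1.2.2 applied to the \emph{dual} cones $\sigma = T_F^\vee$ and $\tau = T_G^\vee$: one uses the normal-fan fact that $T_G^\vee$ is a face of $T_F^\vee$, and Fulton's lemma on semigroups attached to faces of a cone then supplies the required generator. You stay entirely on the primal side and never dualize: you establish $T_G = T_F + L_G$ by direct computation, pick an explicit integral $v_{F,G}$ (a positive multiple of the vector from a vertex of $F$ to the centroid of the vertices of $G$), observe that it lies in the relative interior of the full-dimensional cone $T_F \cap L_G$ inside $L_G$, and exploit that interiority to cancel the $L_G$-component of any $s \in S_G$ after translating by a large integer multiple of $v_{F,G}$. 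This buys a self-contained elementary argument together with an explicit choice of $v_{F,G}$, at the price of being longer than a citation. One small slip worth flagging: the asserted identity $T_F \cap L_G = \cone\{g - f \mid g \text{ a vertex of } G\}$ holds only as the inclusion $\supseteq$ once $\dim F \geq 1$ --- for $P = [0,1]^2$, $F$ the bottom edge, $G = P$, $f$ the origin, the left-hand side is the closed upper half-plane while the right-hand side is merely the first quadrant. Your argument survives because it only uses the correct inclusion $\cone\{g - f\} \subseteq T_F \cap L_G$ and full-dimensionality of the smaller cone in $L_G$: a relative-interior point of a full-dimensional subcone of $T_F \cap L_G$ is automatically a relative-interior point of $T_F \cap L_G$, so $v_{F,G} + l/m$ still lands in $T_F$ for $m$ large, and the rest of the proof goes through unchanged.
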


\begin{proof}
  This is Proposition~1.2.2 of~\cite{Fulton:TVs}, applied to the dual
  cones $\sigma = T_F^\vee$ and $\tau = T_G^\vee$.
\end{proof}

Now let $R$~denote an arbitrary ring with unit. We then have monoid
$R$\nbd-algebras $A^P_F = A_F = R[S_F]$ for every non-empty face~$F$
of~$P$; an element $v \in S_F$ gives rise to the element $x = 1 \cdot
v \in A_F$. Note that $A_F$~is not commutative unless $R$~is, but that
the element~$x$ lies in the centre of~$A_F$. Moreover, monoid
generators of~$S_F$ give rise to $R$\nbd-algebra generators of~$A_F$
which is thus a finitely generated algebra by \textsc{Gordan}'s Lemma
(\cite[1.2.1]{Fulton:TVs}, applied to the dual cone $\sigma =
T_F^\vee$). The previous Lemma implies immediately:

\begin{lemma}
  \label{lem:F_G_loc}
  For $F \subseteq G$ the algebra~$A_G$ is obtained from the
  algebra~$A_F$ by localising by a single element $x_{F,G}$ in the
  centre of~$A_F$. Consequently, for every $p \in A_G$ there is $N
  \geq 0$ such that $x_{F,G}^N \cdot p \in A_F$.  \qed
\end{lemma}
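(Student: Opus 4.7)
The plan is to build the required localisation data directly from the vector~$v_{F,G}$ supplied by the previous lemma. Set $x_{F,G} = 1 \cdot v_{F,G} \in A_F$, which makes sense because $v_{F,G} \in \bZ^n \cap T_F = S_F$. By the remark immediately preceding the lemma, all elements arising from monoid generators lie in the centre of the monoid algebra, so $x_{F,G}$ is central in~$A_F$.

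Next I would verify that the inclusion of faces $F \subseteq G$ induces an inclusion $S_F \subseteq S_G$ of monoids (which is clear from the definition of tangent cones, since any generator $p - f$ of~$T_F$ is also a generator of~$T_G$), and hence an $R$\nbd-linear direct summand inclusion $A_F \hookrightarrow A_G$. Since $S_G = S_F + \bZ v_{F,G}$ contains $-v_{F,G} = 0 + (-1) v_{F,G}$, the element $x_{F,G}$ admits the inverse $1 \cdot (-v_{F,G})$ in $A_G$. The universal property of localising at a central element then yields a ring map
\[A_F \bigl[ x_{F,G}^{-1} \bigr] \longrightarrow A_G \ .\]

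I then want to show this map is an isomorphism. For surjectivity, use the monoid decomposition: any $s \in S_G$ can be written $s = t + k v_{F,G}$ with $t \in S_F$ and $k \in \bZ$, so $x^s = x^t \cdot x_{F,G}^{k}$ lies in the image. For injectivity, observe that since $A_F \hookrightarrow A_G$ is already injective as a map of $R$\nbd-modules (it is the summand inclusion indexed by $S_F \subseteq S_G$), no element of $A_F$ is annihilated by any power of~$x_{F,G}$, so passing to the localisation preserves injectivity. The consequence stated in the lemma is then immediate: writing $p = \sum_i r_i x^{s_i}$ in terms of monoid generators of~$S_G$ with $s_i = t_i + k_i v_{F,G}$, it suffices to take $N = \max_i \max(0, -k_i)$.

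The only potential obstacle is checking the inclusion $S_F \subseteq S_G$ and the injectivity of the localisation map, but both are essentially tautological from the explicit $R$\nbd-module direct sum description $A_F = \bigoplus_{s \in S_F} R$ and $A_G = \bigoplus_{s \in S_G} R$. Nothing beyond the previous lemma and elementary properties of monoid algebras is needed, matching the author's remark that the lemma follows ``immediately''.
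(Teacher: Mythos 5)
Your proof is correct and follows exactly the route the paper intends (which it leaves entirely implicit, asserting the claim as an immediate consequence of the preceding lemma): take $x_{F,G} = 1 \cdot v_{F,G}$, note it is central by the remark before the lemma, and use the monoid decomposition $S_G = S_F + \bZ v_{F,G}$ to identify $A_G$ with the localisation and to produce the exponent~$N$. The only stylistic point worth mentioning is that the injectivity of $A_F[x_{F,G}^{-1}] \to A_G$ is cleanest to phrase as an instance of flatness of localisation applied to the $R$\nbd-module (indeed $A_F$\nbd-module) inclusion $A_F \hookrightarrow A_G$; your remark that no element of $A_F$ is killed by a power of $x_{F,G}$ addresses the injectivity of $A_F \to A_F[x_{F,G}^{-1}]$ rather than of the map you actually need, though both statements are true and the argument goes through.
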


Replacing~$P$ by an integral dilate~$kP = \{kp \,|\, p \in P\}$,
$k \geq 1$, and replacing the face~$F$ by its dilate~$kF$ does not
change these cones, monoids and algebras. That is, $T^P_F =
T^{kP}_{kF}$, and similarly for~$S_F$ and~$A_F$.

\begin{definition}[Presheaves]
  \label{def:presheaves}
  Let $F(P)_0$ denote the set of non-empty faces of~$P$, partially
  ordered by inclusion. We write $\Ch(R)$ for the category of
  (possibly unbounded) chain complexes of $R$\nbd-modules.
  \begin{enumerate}
  \item A {\it presheaf on~$P$\/} is a functor
    \[Y \colon F(P)_0 \rTo \Ch(R), \ F \mapsto Y^F\] equipped with
    extra data which turns the object $Y^F$ into a chain complex of
    (left) $A_F$-modules, such that for each pair of non-empty faces
    $F \subseteq G$ of~$P$ the structure map $Y^F \rTo Y^G$ is a map
    of $A_F$\nbd-module chain complexes.
  \item A {\it map of presheaves\/} is a natural transformation of
    functors such that its $F$\nbd-component is an $A_F$\nbd-linear
    map, for each $F \in F(P)_0$. The category of presheaves is
    denoted by $\pre (P)$.
  \item A map $f \colon Y \rTo Z$ of presheaves is called a {\it
      quasi-isomorphism}, or a {\it weak equivalence}, if all its
    components~$f^F$ are quasi-isomorphisms of chain complexes of
    modules.
  \end{enumerate}
\end{definition}

\begin{definition}[Quasi-coherent sheaves]
  \label{def:qc_sheaf}
  \begin{enumerate}
  \item An object $Y \in \pre(P)$ is called a {\it quasi-coherent
      sheaf}, or {\it sheaf\/} for short, if for each pair of
    non-empty faces $F \subseteq G$ of~$P$, the adjoint structure map
    \[A_G \tensor_{A_F} Y^F \rTo Y^G\] is an isomorphism of
    $A_G$-module chain complexes.
  \item The full subcategory of~$\pre(P)$ consisting of sheaves is
    denoted $\qco(P)$.
  \end{enumerate}
\end{definition}

\begin{definition}[Homotopy sheaves]
  \label{def:homoty_sheaf}
  \begin{enumerate}
  \item A presheaf $Y \in \pre(P)$ is called a {\it homotopy sheaf\/}
    if for each pair of non-empty faces $F \subseteq G$ of~$P$ the
    adjoint structure map $A_G \tensor_{A_F} Y^F \rTo Y^G$ is a
    quasi-isomorphism of chain complexes.
  \item The full subcategory of $\pre(P)$ consisting of homotopy
    sheaves is denoted $\hco(P)$.
  \end{enumerate}
\end{definition}

Every sheaf is a homotopy sheaf, and $\qco(P)$ is a full subcategory
of~$\hco(P)$. Moreover, the notion of a homotopy sheaf is homotopy
invariant in the following sense: {\it If $f \colon Y \rTo Z$ is a
  weak equivalence of presheaves, then $Y$~is a homotopy sheaf if and
  only if $Z$~is a homotopy sheaf.}  This is true since $A_G$~is a
localisation of~$A_F$ by \ref{lem:F_G_loc}, for every pair of
non-empty faces $F \subseteq G$ of~$P$, so that the functor $A_G
\tensor_{A_F} \,\hbox{-}\,$ is exact.

\medskip

A chain complex of modules over some ring is called {\it strict
  perfect\/} if it is bounded and consists of finitely generated
projective modules. It is called {\it perfect\/} if it is
quasi-isomorphic to a strict perfect complex. In fact, a complex~$C$
is perfect if and only if there exists a strict perfect complex~$B$
and a quasi-isomorphism $B \rTo C$.

\begin{definition}[Perfect complexes and vector bundles]
  \label{def:perfect}
  \begin{enumerate}
  \item The ho\-mo\-topy sheaf $Y \in \hco(P)$ is called a {\it
      perfect complex\/} if for each $F \in F(P)_0$ the chain complex
    $Y^F$ is a perfect complex of $A_F$\nbd-modules.
  \item The full subcategory of~$\hco(P)$ consisting of perfect
    complexes is denoted by $\perf(P)$.
  \item A homotopy sheaf $Y \in \hco(P)$ is called a {\it homotopy
      vector bundle\/} if for each $F \in F(P)_0$ the chain
    complex~$Y^F$ is a strict perfect complex of $A_F$\nbd-modules.
  \item The full subcategory of~$\hco(P)$ consisting of homotopy
    vector bundles is denoted $\hvect(P)$.
  \end{enumerate}
\end{definition}

The notion of a perfect complex is homotopy invariant in the manner
described above for homotopy sheaves.

\medskip

In algebraic geometry, a perfect complex can be replaced up to
quasi-isomorphism by a bounded chain complex of vector bundles
provided the scheme under consideration has an ample line bundle (as
is the case for a projective scheme) or, more generally, has an ample
family of line bundles. In the homotopy world the replacement is
possible without reference to such additional structure.

\begin{lemma}
  \label{lem:perf_qis_hvect}
  For $Y \in \perf(P)$ there is a homotopy vector bundle $Z \in
  \hvect(P)$ together with a quasi-isomorphism $\zeta \colon Z
  \rTo^\simeq Y$.
\end{lemma}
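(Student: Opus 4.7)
The plan is to build $Z$ by induction along the finite face poset $F(P)_0$. I would fix a linear extension $F_1, F_2, \ldots, F_N$ of the partial order (so that $F_i \subsetneq F_j$ forces $i<j$) and, at the $i$\nbd-th stage, produce a strict perfect $A_{F_i}$\nbd-module chain complex $Z^{F_i}$, an $A_{F_j}$\nbd-linear structure map $Z^{F_j} \to Z^{F_i}$ for every $F_j \subseteq F_i$ with $j < i$, and a quasi-isomorphism $\zeta^{F_i}\colon Z^{F_i} \to Y^{F_i}$, subject to the requirement that every square involving $\zeta^{F_j}$, $\zeta^{F_i}$, the new structure map, and the original structure map $Y^{F_j} \to Y^{F_i}$ commutes on the nose, and that compositions of structure maps among the $Z^{F_\bullet}$ are strictly associative.

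For the base case ($F_1$ a vertex) I would simply pick any strict perfect resolution $\zeta^{F_1}\colon Z^{F_1} \to Y^{F_1}$; this exists because $Y^{F_1}$ is perfect over $A_{F_1}$ by hypothesis.

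For the inductive step at~$F_i$ I note that each base change $A_{F_i} \tensor_{A_{F_j}} Z^{F_j}$ (with $F_j \subsetneq F_i$) remains strict perfect over~$A_{F_i}$: Lemma~\ref{lem:F_G_loc} identifies $A_{F_i}$ as a central localisation of $A_{F_j}$, hence a flat $A_{F_j}$\nbd-algebra, so tensoring preserves boundedness, finite generation, and projectivity. The homotopy-sheaf property of~$Y$ then makes the induced map $A_{F_i} \tensor_{A_{F_j}} Z^{F_j} \to Y^{F_i}$ a quasi-isomorphism. Choosing some auxiliary strict perfect resolution $\tilde Z \to Y^{F_i}$, I would use cofibrancy of strict perfect complexes to lift each map $A_{F_i} \tensor_{A_{F_j}} Z^{F_j} \to Y^{F_i}$ through~$\tilde Z$; these lifts commute with the inductively fixed data only up to chain homotopy. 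An iterated mapping cylinder construction along the diagram of base changes indexed by $\{F_j : F_j \subsetneq F_i\}$ then absorbs all these homotopies into the complex, yielding a single strict perfect $A_{F_i}$\nbd-complex $Z^{F_i}$ with strictly commuting structure maps from every $A_{F_i} \tensor_{A_{F_j}} Z^{F_j}$ (equivalently, with the required $A_{F_j}$\nbd-linear maps $Z^{F_j} \to Z^{F_i}$) and a quasi-isomorphism $\zeta^{F_i}\colon Z^{F_i} \to Y^{F_i}$ strictly compatible with all previously constructed data.

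The hard part is the coordination of successive lifts: lifting a single map along a quasi-isomorphism of strict perfect complexes is routine, but ensuring that \emph{all} compositions commute strictly — so that $F \mapsto Z^F$ genuinely defines a presheaf and $\zeta$ a morphism of presheaves rather than a diagram commuting only up to homotopy — is what forces the mapping cylinder modification at every stage. Since strict perfectness is preserved by finite direct sums, shifts, and mapping cylinders, and $F(P)_0$ is finite, the construction terminates and delivers the required homotopy vector bundle $Z \in \hvect(P)$ together with $\zeta\colon Z \rTo^{\simeq} Y$.
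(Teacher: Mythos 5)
Your proposal follows the same overall strategy as the paper: induct over the face poset, use that $A_G \to A_F$ is a central localization (hence flat) to conclude that $A_F\tensor_{A_G} Z^G$ stays strict perfect, invoke the homotopy-sheaf property to see the induced map to $Y^F$ is a quasi-isomorphism, and use a mapping-cylinder modification to rigidify. (The paper inducts by face dimension rather than an arbitrary linear extension, but that is immaterial.) However, the crux of the inductive step --- what you yourself flag as ``the hard part'' --- is left unspecified, and as stated your plan has a genuine gap.

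The issue is that $\{F_j : F_j \subsetneq F_i\}$ is not a chain but a poset, so the phrase ``iterated mapping cylinder construction'' does not, on its own, resolve the coherence problem. Iterating cylinders along a linear enumeration $G_1,\dots,G_m$ of the proper subfaces produces strict maps one at a time, but when two faces $G_k, G_l$ share a common subface $G_r$, nothing forces the two composites $A_{F_i}\tensor_{A_{G_r}} Z^{G_r} \to A_{F_i}\tensor_{A_{G_k}} Z^{G_k} \to Z^{F_i}$ and $A_{F_i}\tensor_{A_{G_r}} Z^{G_r} \to A_{F_i}\tensor_{A_{G_l}} Z^{G_l} \to Z^{F_i}$ to agree on the nose: each new cylinder only fixes the homotopy relating the newly attached object to the target, not the higher coherences among the already-built structure maps. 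At each stage you are actually facing a relative lifting problem rather than an absolute one, and that is where the argument needs an additional idea.

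The device the paper uses is to form, before doing any cylinders, the colimit
\[L^{F_i}Z = \mathop{\mathrm{colim}}_{G \subsetneq F_i}\ A_{F_i} \tensor_{A_G} Z^G\]
over the sub-poset of proper faces. Because $Z$ (on the smaller poset) is already a genuine presheaf, this colimit exists with no coherence headache, and because $Y$ is also a genuine presheaf there is a canonical map $L^{F_i}Z \to Y^{F_i}$. Now there is only \emph{one} map and only \emph{one} homotopy to absorb: one shows it factors through the chosen strict perfect resolution $V^{F_i}\to Y^{F_i}$ in the derived category, and a single mapping cylinder of $L^{F_i}Z \to V^{F_i}$ produces $Z^{F_i}$ together with a map $\zeta^{F_i}$; the structure maps $Z^G \to Z^{F_i}$ are then simply the composites $Z^G \to L^{F_i}Z \to Z^{F_i}$, and all compositions commute strictly by the universal property of the colimit. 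This colimit step is exactly what is missing from your sketch; without it (or some equivalent homotopy-colimit device that packages the whole sub-poset diagram into a single source) the ``iterated mapping cylinder'' does not deliver a strictly commuting presheaf.

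Two smaller remarks. First, the fact that $L^{F_i}Z$ is strict perfect is not completely automatic and deserves a word; the paper states it without elaboration. Second, the factorisation of $L^{F_i}Z \to Y^{F_i}$ through $V^{F_i}$ uses a derived-category lifting argument (the paper cites a specific result in Weibel); your appeal to ``cofibrancy of strict perfect complexes'' is the right idea but should be made precise in the same spirit. Adding the colimit step and these two clarifications would bring your argument in line with the paper's.
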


\begin{proof}
  By definition of perfect complexes there is, for each $F \in
  F(P)_0$, a bounded chain complex~$V^F$ of finitely generated
  $A_F$\nbd-modules together with a quasi-isomorphism $\nu^F \colon
  V^F \rTo Y^F$.

  Let $\mathcal{P}(d)$ denote the sub-poset of~$F(P)_0$ of faces of
  dimension at most~$d$. For $F$~a vertex of~$P$ define $Z^F = V^F$
  and $\zeta^F = \nu^F$; this defines a
  $\mathcal{P}(0)$\nbd-dia\-gram~$Z$ and a quasi-isomorphism $Z \rTo
  Y|_{\mathcal{P}(0)}$.

  Suppose we have constructed a quasi-isomorphism of
  $\mathcal{P}(d-1)$\nbd-diagrams $Z \rTo Y_{\mathcal{P}(d-1)}$ such
  that each component of~$Z$ is a strict perfect complex over the
  appropriate algebra. We show how to extend this data
  to~$\mathcal{P}(d)$.

  Fix a $d$\nbd-dimensional face~$F$ of~$P$, and let
  \[L^FZ = \mathop{\mathrm{colim}}_{\substack{G \subseteq F\\\dim
      G<d}} A_F \tensor_{A_G} Z^G \ .\] This is a strict perfect
  complex of $A_F$\nbd-modules. We define $L^FY$ by a similar colimit
  with~$Z^G$ replaced by~$Y^G$. These come equipped with canonical
  maps $L^FZ \rTo L^FY \rTo Y^F$ induced by the maps $\zeta^G$ defined
  before, and the structure maps of~$Y$. Up to homotopy, the
  composition factors over the quasi-isomorphism~$\nu^F$ (this follows
  from \cite[10.4.7]{Weibel-intro} together with the fact that $\nu^F$
  induces an isomorphism of hom-sets $\hom(L^FZ, V^F) \iso
  \hom(L^FZ,Y^F)$ in the derived category of~$A_F$). Let $Z^F$~be the
  mapping cylinder of $L^FZ \rTo V^F$; a homotopy then determines a
  map $\zeta^F \colon Z^F \rTo Y^F$ such that the two compositions
  \[L^FZ \rTo Z^F \rTo Y^F \qquad \text{and} \qquad L^FZ \rTo L^FY
  \rTo Y^F\] agree. Since $Z^F \simeq V^F$ the map~$\zeta^F$ is a
  quasi-isomorphism. For $G \subset F$ define a structure map $Z^G
  \rTo Z^F$ as the composition $Z^G \rTo L^F \rTo Z^F$, considered as
  maps of $A_G$\nbd-modules.

  Performing this construction for each $d$\nbd-dimensional face
  of~$P$ yields a $\mathcal{P}(d)$\nbd-diagram~$Z$ together with a
  quasi-isomorphism $\zeta \colon Z \rTo Y|_{\mathcal{P}(d)}$. At the
  $n$th step we arrive at the assertion of the Lemma.
\end{proof}

\subsection{Algebraic $K$-theory}
\label{subsec:algebraic-k-theory}

The category $\perf(P)$ carries the structure of a ``complicial
bi\textsc{Waldhausen} category'' in the sense of \textsc{Thomason} and
\textsc{Trobaugh} \cite[1.2.11]{TT-K}; as ambient \textsc{abel}ian
category we choose the category~$\pre(P)$. The weak equivalences are
as defined in~\ref{def:presheaves}. The cofibrations are the
degreewise split injections $Y \rTo Z$ in $\perf(P)$ with cokernel
in~$\perf(P)$.

\begin{definition}
  The {\it algebraic $K$-theory of~$P$\/} is defined to be the
  $K$-theory space of the complicial bi\textsc{Waldhausen} category
  $\perf(P)$. In symbols,
  \[K(P) = \Omega |w \mathcal{S}_\bullet \perf(P)|\] where
  the symbol ``$w$'' denotes the subcategory of weak equivalences as
  usual.
\end{definition}

\subsection{Justification of terminology}

Let now~$R$ be a {\it commutative\/} ring with unit. The polytope~$P$
determines a projective $R$\nbd-scheme $X_P$, obtained from the affine
schemes $U_F = \mathrm{Spec}\, A_F$ by gluing $U_F$ and~$U_G$ along
their common open subscheme $U_{F \vee G}$. A chain
complex~$\mathcal{F}$ of quasi-coherent
$\mathcal{O}_{X_P}$\nbd-modules gives rise, by evaluation on open
sets~$U_F$, to a sheaf
\begin{equation}
  \label{eq:sheaf_Y_F}
  Y_\mathcal{F} \colon F \mapsto \Gamma (U_F,\,
  \mathcal{F})
\end{equation}
as defined in~\ref{def:qc_sheaf}. The categories of chain
complexes of quasi-coherent $\mathcal{O}_{X_P}$\nbd-modules and of
perfect complexes of quasi-coherent $\mathcal{O}_{X_P}$\nbd-modules in
the sense of \cite[2.2.10]{TT-K} are equivalent, via this
construction, to the categories $\qco(P)$ and $\perf(P) \cap \qco(P)$,
respectively.

\begin{lemma}
  \label{lem:replace_by_sheaf}
  Every homotopy sheaf $Y \in \hco(P)$ can be functorially replaced by
  a chain complex of quasi-coherent $\mathcal{O}_{X_P}$\nbd-modules
  $\mathcal{F}$ in such a way that $\Gamma (U_F,\, \mathcal{F})$
  and~$Y^F$ are quasi-isomorphic. More precisely, there exists a
  homotopy sheaf $\bar Y \in \hco(P)$, and there exist maps
  $Y_\mathcal{F} \rTo \bar Y \lTo Y$, cf.~(\ref{eq:sheaf_Y_F}), which
  restrict to quasi-isomorphisms of chain complexes on $F$-components
  for every $F \in F(P)_0$. Moreover, this data can be chosen to
  depend on~$Y$ in a functorial manner.
\end{lemma}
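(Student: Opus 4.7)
The plan is to first replace $Y$ component-wise by a sufficiently fibrant chain complex, and then extract a genuine chain complex of quasi-coherent $\mathcal{O}_{X_P}$-modules from that improved data. Throughout, I would exploit the hypothesis that $R$ is commutative: every $A_F$ is then commutative, and every $A_G$ is a flat localisation of $A_F$ (Lemma~\ref{lem:F_G_loc}), so the functors $A_G \tensor_{A_F} \,\hbox{-}\,$ are exact and preserve quasi-isomorphisms.

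First I would apply a functorial K\nbd-injective replacement (of the kind constructed by \textsc{Spaltenstein} for unbounded chain complexes) to each component~$Y^F$ in the category of chain complexes of $A_F$\nbd-modules, compatibly with the structure maps of~$Y$. This yields a presheaf $\bar Y \in \pre(P)$ together with a natural component-wise quasi-isomorphism $Y \rTo \bar Y$. Because a component-wise quasi-isomorphism preserves the homotopy-sheaf condition, $\bar Y$ lies in $\hco(P)$.

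Second, I would construct $\mathcal{F}$ from the vertex data of~$\bar Y$. Each $\bar Y^v$ is a K\nbd-injective complex of $A_v$\nbd-modules, hence corresponds to a chain complex of quasi-coherent $\mathcal{O}_{U_v}$\nbd-modules. The structure maps of $\bar Y$ provide, on overlaps $U_{v \vee v'}$, quasi-isomorphisms between K\nbd-injective complexes of $A_{v \vee v'}$\nbd-modules, which are honest chain homotopy equivalences; these can be rectified functorially to strict gluing data, producing a chain complex $\mathcal{F}$ of quasi-coherent $\mathcal{O}_{X_P}$\nbd-modules whose restriction to each $U_v$ agrees, up to canonical quasi-isomorphism, with that of $\bar Y$. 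The resulting natural map $Y_\mathcal{F} \rTo \bar Y$, cf.~(\ref{eq:sheaf_Y_F}), is then a component-wise quasi-isomorphism, and together with $Y \rTo \bar Y$ gives the required zigzag. Functoriality in~$Y$ is inherited from each step of the construction.

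The main obstacle is the rectification in the second step: turning the homotopy-coherent gluing data supplied by the K\nbd-injective resolution into actual isomorphisms defining a genuine chain complex of quasi-coherent sheaves on $X_P$, all natural in~$Y$. The remainder of the argument is essentially formal, but making this rectification explicit requires either a carefully chosen model structure on unbounded complexes of quasi-coherent sheaves on~$X_P$, or an explicit functorial \textsc{\v Cech}\nbd-type construction based on the vertex cover of $X_P$.
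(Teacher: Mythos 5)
Your proposal follows a route genuinely different from the paper's, and the central step of your construction has a real gap that you yourself flag but do not close.

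The paper's proof (which cites \cite[4.4.1]{H-colocal} for the full details) uses a model structure on the entire presheaf category~$\pre(P)$, not a component-by-component resolution. Fibrancy there is a global condition on the diagram, not just K\nbd-injectivity of each~$\bar Y^F$; this is what makes the structure maps of~$\bar Y$ well-behaved enough for the next step. Your plan to take ``a functorial K\nbd-injective replacement ... to each component~$Y^F$ ... compatibly with the structure maps of~$Y$'' is not obviously realisable: a K\nbd-injective replacement of~$Y^F$ over~$A_F$ and one of~$Y^G$ over~$A_G$ come with no canonical $A_F$\nbd-linear map between them, and arranging one functorially in a manner coherent over all chains $F \subseteq G \subseteq H$ is essentially the same problem the model structure on~$\pre(P)$ is designed to solve. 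You are implicitly assuming that solution without constructing it.

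The deeper divergence is in the second step. The paper does not glue: it defines $\mathcal{F}$ as the \emph{limit} over the full face poset $F(P)_0$ of the pushforwards $j^F_*\bigl(\widetilde{\bar Y^F}\bigr)$, where $j^F \colon U_F \hookrightarrow X_P$. Because $\bar Y$ is fibrant, this limit has the correct sections over each~$U_F$ up to quasi-isomorphism. Your proposal instead restricts to vertex data and tries to ``rectify'' homotopy-coherent gluing into strict isomorphisms. As you note, this is the main obstacle, and it is not a small one: the gluing data for a chain complex of quasi-coherent sheaves must satisfy the cocycle condition on triple overlaps \emph{on the nose}, whereas the structure maps of~$\bar Y$ only supply quasi-isomorphisms, with an unbounded tower of higher homotopy coherences. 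Making such a rectification explicit, functorial in~$Y$, and correct is itself a theorem comparable in difficulty to what you are trying to prove, and your sketch does not indicate how to do it. The pushforward-limit construction sidesteps this entirely, which is precisely why it, rather than rectification, is used in the source you would need to reprove.

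So the proposal identifies the right first move (produce a componentwise quasi-isomorphic~$\bar Y$ that is ``more fibrant'') but does not know the key second move (take $\mathcal{F} = \lim_{F} j^F_*(\widetilde{\bar Y^F})$), and the substitute it offers is left as an acknowledged but unsolved problem.
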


\begin{proof}
  This is the content of \cite[4.4.1]{H-colocal}. In short, the
  homotopy sheaf~$\bar Y$ is a fibrant replacement of~$Y$ with respect
  to a suitable model structure on~$\pre(P)$ (the replacement can be
  chosen functorially in~$Y$), and $\mathcal{F}$ is the limit of the
  $F(P)_0$\nbd-diagram of quasi-coherent
  $\mathcal{O}_{X_P}$\nbd-modules $F \mapsto j^F_* (\tilde Y^F)$. Here
  $j^F$~is the inclusion $U_F \subset X_P$, $j^F_*$~is push-forward
  along~$j$, and $\tilde Y^F$ is the the chain complex of
  quasi-coherent $\mathcal{O}_{U_F}$\nbd-modules associated to the
  complex of modules~$\bar Y^F$.
\end{proof}

For $Y \in \perf(P)$ the chain complex~$\mathcal{F}$ of
Lemma~\ref{lem:replace_by_sheaf} is a perfect complex in the sense of
\cite[2.2.10]{TT-K}.  Conversely, every perfect complex $\mathcal{F}$
of quasi-coherent $\mathcal{O}_{X_P}$\nbd-modules gives rise to an
object $Y_\mathcal{F} \colon F \mapsto \Gamma (U_F,\, \mathcal{F})$
of~$\perf(P)$, by \cite[2.4.3]{TT-K} applied to the affine
schemes~$U_F$.

\begin{definition}
  The algebraic $K$\nbd-theory $K(X_P)$ of~$X_P$ is the algebraic
  $K$-theory space of the complicial bi\textsc{Waldhausen} category of
  perfect complexes of quasi-coherent $\mathcal{O}_{X_P}$\nbd-modules,
  equipped with weak equivalences the quasi-isomorphisms, and
  cofibrations the degreewise split mono\-morphisms with cokernel a
  perfect complex.
\end{definition}

This is the ``right'' definition by \cite[3.6]{TT-K} since the scheme
$X_P$ is quasi-compact (a finite union of affine schemes) and
semi-separated (with open affine subschemes~$U_F$ as semi-separating
cover).

\begin{proposition}
  \label{prop:KXPisKP}
  The algebraic $K$\nbd-theory space $K(X_P)$ of~$X_P$ is homotopy
  equivalent to $K(P)$.
\end{proposition}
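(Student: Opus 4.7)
The plan is to split the homotopy equivalence $K(X_P) \simeq K(P)$ into two independent steps, reflecting the two distinct incarnations of ``sheaf'' at play: the genuinely quasi-coherent $\mathcal{O}_{X_P}$-modules on the geometric side, and the presheaves on $F(P)_0$ on the combinatorial side.

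First I would identify $K(X_P)$ with $K(\perf(P) \cap \qco(P))$. The evaluation functor $\mathcal{F} \mapsto Y_\mathcal{F}$ of equation~(\ref{eq:sheaf_Y_F}) is, by the discussion preceding this proposition (which invokes \cite[2.4.3]{TT-K} on each affine piece $U_F$), an equivalence of categories between perfect complexes of quasi-coherent $\mathcal{O}_{X_P}$-modules and $\perf(P) \cap \qco(P)$. I would then verify that this equivalence is exact with respect to the complicial bi\textsc{Waldhausen} structures: quasi-isomorphisms in $\Ch(\mathcal{O}_{X_P})$ correspond to componentwise quasi-isomorphisms of presheaves (both being tested on the affine cover $\{U_F\}_F$), and the cofibrations---degreewise split monomorphisms with perfect cokernel---correspond under the equivalence. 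This implies that the induced map on $K$-theory is a homotopy equivalence.

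Second, I would show that the inclusion $\iota \colon \perf(P) \cap \qco(P) \rTo \perf(P)$ induces a homotopy equivalence on $K$-theory. The functor $\iota$ is evidently exact: its cofibrations are a fortiori degreewise split monomorphisms with perfect cokernel in $\perf(P)$, and weak equivalences are preserved by definition. I would apply \textsc{Waldhausen}'s approximation theorem in the form of \cite[1.9.1]{TT-K}. Condition (App~1), that $\iota$ reflects weak equivalences, is automatic from the componentwise definition. For (App~2), given $Z \in \perf(P) \cap \qco(P)$ and a morphism $f \colon \iota Z \rTo Y$ in $\perf(P)$, I would convert $f$ into a cofibration by factoring through the mapping cylinder $Z \rTo M(f) \rTo^\simeq Y$, then functorially replace $M(f)$ by a quasi-coherent sheaf via Lemma~\ref{lem:replace_by_sheaf}. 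Naturality of the replacement, together with the fact that $Z$ is already quasi-coherent, should allow one to transport the cofibration $Z \rTo M(f)$ to a cofibration $Z \rTo Z'$ between quasi-coherent perfect complexes, with $Z' \rTo^\simeq Y$, up to traversing a zig-zag of weak equivalences.

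The main obstacle is a careful verification of (App~2): Lemma~\ref{lem:replace_by_sheaf} produces only a zig-zag $Y_\mathcal{F} \rTo \bar Y \lTo Y$ rather than a single weak equivalence, so one must traverse the zig-zag and use the cylinder-functor machinery of complicial bi\textsc{Waldhausen} categories to convert intermediate weak equivalences into honest cofibrations. Should this direct approach prove cumbersome, the cleaner alternative is to invoke the derived-equivalence criterion~\cite[1.9.8]{TT-K}: Lemma~\ref{lem:replace_by_sheaf} directly says that every object of $\perf(P)$ is weak-equivalent to an object of the full subcategory $\perf(P) \cap \qco(P)$, so $\iota$ induces an essentially surjective, fully faithful functor of derived categories, and hence a homotopy equivalence of associated $K$-theory spaces.
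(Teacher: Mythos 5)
Your fallback route---invoking \cite[1.9.8]{TT-K} together with Lemma~\ref{lem:replace_by_sheaf}---is exactly what the paper does; it notes that the evaluation functor $\mathcal{F} \mapsto Y_\mathcal{F}$ is exact and induces an equivalence of derived categories (by Lemma~\ref{lem:replace_by_sheaf}), then cites \cite[1.9.8]{TT-K}. The paper merely omits the intermediate stop at $\perf(P) \cap \qco(P)$, passing directly from perfect $\mathcal{O}_{X_P}$-complexes to $\perf(P)$.

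Your primary route via \textsc{Waldhausen}'s approximation theorem is a genuinely different tactic, and the obstacle you identify is real: condition (App~2) asks for a cofibration $Z \rTo Z'$ in $\perf(P) \cap \qco(P)$ together with an honest weak equivalence $\iota Z' \rTo Y$ through which $f$ factors, but Lemma~\ref{lem:replace_by_sheaf} only produces the wrong-way zig-zag $Y_\mathcal{F} \rTo \bar Y \lTo Y$, with $\bar Y$ not necessarily quasi-coherent. Converting that zig-zag into the required factorisation would need something like a functorial cofibrant-fibrant replacement entirely within $\qco(P)$, which the lemma does not supply. The derived-equivalence criterion sidesteps this because essential surjectivity on the derived category only needs the weak-equivalence \emph{class} of $Y$ to contain a quasi-coherent representative, and full faithfulness follows since the replacement is functorial. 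So the paper's choice of \cite[1.9.8]{TT-K} over \cite[1.9.1]{TT-K} is not merely cosmetic; it is tailored to the shape of Lemma~\ref{lem:replace_by_sheaf}.
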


\begin{proof}
  The functor sending a perfect complex~$\mathcal{F}$ to its
  associated object $Y_\mathcal{F}$ of~$\perf(P)$,
  \[Y_\mathcal{F} \colon F \mapsto \Gamma (U_F,\, \mathcal{F}) \ ,\]
  is exact and induces, in view of Lemma~\ref{lem:replace_by_sheaf},
  an equivalence of derived categories. By \cite[1.9.8]{TT-K} this
  implies that $K(X_P) \simeq K(P)$ as claimed.
\end{proof}

\subsection{Lattice points and Ehrhart polynomials}

We need to introduce a classical result on counting lattice points in
polytopes. Let $P \subseteq \bR^n$ be an $n$\nbd-dimensional polytope
with integral vertices. For a non-negative integer~$k$, let $N_P(k)$
denote the number of integral points in the dilation~$kP$
of~$P$. Since $0P = \{0\} \subset \bR^n$ we have $N_P(0)=1$.

\begin{theorem}[Ehrhart Theorem]
  \label{thm:Ehrhart}
  There is a unique polynomial $E_P(x)$ with rational coefficients,
  called the \textsc{Ehrhart} polynomial of~$P$, such that $E_P(k) =
  N_P(k)$ for all non-negative integers~$k$. The polynomial $E_P(x)$
  has degree~$n$, constant term~$1$ and leading coefficient
  $\mathrm{vol}\,(P)$ (with volume normalised so that $\mathrm{vol}\,
  \big([0,1]^n\big) = 1$). Moreover, if $j$~is a negative integer then
  $(-1)^n E_P(j)$ is the number of integral points in the interior of
  the dilation~$jP$.
\end{theorem}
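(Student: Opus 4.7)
The plan is to establish existence via the Hilbert series of the cone over~$P$, then extract the three assertions. Form the cone $C(P) = \cone \{(p,1) \mid p \in P\} \subseteq \bR^{n+1}$; lattice points of $kP$ correspond bijectively to lattice points of $C(P)$ at height~$k$, so that
\[
  H_P(t) = \sum_{k \geq 0} N_P(k)\, t^k = \sum_{(v,k) \in C(P) \cap \bZ^{n+1}} t^k.
\]
Uniqueness of $E_P$ is automatic once existence is known, since a polynomial of degree at most $n$ is determined by its values at $0, 1, \ldots, n$.

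For existence, I would triangulate $P$ into $n$\nbd-dimensional simplices with integral vertices; this induces a decomposition of $C(P)$ into simplicial cones. For a simplicial cone with ray generators $v_0, \ldots, v_n$ at height~$1$, the Hilbert series evaluates to a rational function $h(t)/(1-t)^{n+1}$ with $h \in \bZ[t]$ of degree at most~$n$, computed by enumerating lattice points in the fundamental half-open parallelepiped. Inclusion--exclusion over the triangulation (subtracting contributions of shared faces, which are themselves simplicial cones of lower dimension) yields the same shape $h_P(t)/(1-t)^{n+1}$ for $H_P(t)$. Expanding as a power series, its coefficients are a polynomial in $k$ of degree at most~$n$, which is the sought $E_P(x)$. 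The leading coefficient equals $\mathrm{vol}(P)$ because $N_P(k)/k^n \to \mathrm{vol}(P)$ as $k \to \infty$ (the lattice points of $kP$ rescaled by $1/k$ are a Riemann sum approximating $\mathrm{vol}(P)$), and the constant term is $E_P(0) = N_P(0) = 1$.

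For the reciprocity statement, I would again reduce to simplices via the triangulation and establish the functional equation
\[
  H_P(1/t) = (-1)^{n+1} \sum_{j \geq 1} N^\circ_P(j)\, t^j,
\]
where $N^\circ_P(j)$ counts integral points in the interior of $jP$. For a single simplicial cone this is a direct computation from the explicit form of $h(t)$, using that replacing the fundamental parallelepiped by its interior translates into $t \mapsto 1/t$ up to the sign $(-1)^{n+1}$ and a monomial shift. The main technical obstacle is the bookkeeping required to pass from simplices to~$P$: lattice points lying on interior faces of the triangulation of~$jP$ must be tracked carefully, so that the inclusion--exclusion on the Hilbert series side matches the interior-versus-boundary distinction on the lattice-point count side. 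Once this is handled, matching coefficients of $t^j$ yields $(-1)^n E_P(-j) = N^\circ_P(j)$ for every $j \geq 1$, completing the theorem.
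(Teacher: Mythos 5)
The paper does not prove the Ehrhart Theorem; it cites Miller--Sturmfels \cite[12.16]{MS-CombAlg} and Barvinok \cite[\S18]{Barvinok-latticepoints} for a proof. Your sketch follows the same standard route as those references: pass to the cone $C(P)$ at height one, compute the Hilbert series as a rational function $h_P(t)/(1-t)^{n+1}$ by triangulating, read off polynomiality of the coefficients, and obtain reciprocity via the functional equation $H_{C(P)}(1/t) = (-1)^{n+1}\,H_{C(P)^\circ}(t)$. So in spirit you are reproducing the cited proof rather than finding a different one.

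The one place where the sketch is genuinely thin is the place you yourself flag: the passage from individual simplicial cones to the full polytope, and the matching of the boundary bookkeeping on the two sides of the reciprocity identity. As written, ``inclusion--exclusion over the triangulation'' hides two separate problems. First, interior faces of the triangulation are shared by several maximal simplices, so the generating function of the disjoint union overcounts and you must subtract with the correct multiplicities governed by the M\"obius function (or, equivalently, the Euler characteristic) of the face poset of the triangulation. Second, and more delicately, when you invert $t$ you convert each closed simplicial cone into its open interior, and you then need the union of those open pieces to assemble exactly to the open cone $C(P)^\circ$ --- again up to the same alternating boundary correction. These two corrections do match, but showing that they match is precisely the content of what is usually called Stanley reciprocity for simplicial cones together with a combinatorial argument about the face poset of a triangulation. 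The cleanest way to remove the bookkeeping altogether, and what the modern treatments (including Barvinok's) effectively do, is to choose a \emph{half-open} decomposition of $C(P)$ into half-open simplicial cones (obtained, e.g., from a shelling of the triangulation, or by ``pulling'' a generic point): the pieces are then genuinely disjoint, their union is exactly $C(P)$, the Hilbert series adds with no inclusion--exclusion, and the $t\mapsto 1/t$ substitution carries each half-open cone to the complementary half-open cone, so that the same disjointness gives reciprocity directly. If you replace the phrase ``inclusion--exclusion over the triangulation'' with a half-open decomposition, the sketch becomes a complete proof.

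One small additional point worth stating explicitly: to conclude that the power-series coefficients of $h_P(t)/(1-t)^{n+1}$ agree with a single polynomial for \emph{all} $k \geq 0$ (and not just for $k$ large), you should note that $\binom{k-i+n}{n}$, viewed as a polynomial in $k$, vanishes at the finitely many integers where the binomial coefficient would otherwise be undefined or negative-indexed, so the formula $E_P(k) = \sum_{i=0}^n h_i\binom{k-i+n}{n}$ is literally correct for every $k \geq 0$, giving $E_P(0) = h_0 = 1$ without any separate argument.
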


The reader can find a proof of this remarkable theorem in
\cite[12.16]{MS-CombAlg} or \cite[\S18]{Barvinok-latticepoints}. ---
From the geometric meaning of the \textsc{Ehrhart} polynomial we
deduce:

\begin{corollary}
  \label{cor:zeros_of_Ehrhart}
  All integral zeros of~$E_P(x)$ are negative, and the set of integral
  zeros of~$E_P(x)$ is of the form $\{-n_P,\, -n_P+1,\, \cdots,\,
  -1\}$ for some integer $n_P \in [0,n]$. The number $n_P$ is minimal
  among integers $k \geq 0$ such that $-(k+1)P$ has integral points in
  its interior.\qed
\end{corollary}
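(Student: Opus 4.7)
The plan is to read off all three claims directly from Theorem~\ref{thm:Ehrhart}. First, for any integer $k \geq 0$ the dilation $kP$ contains its integral vertices, so $N_P(k) \geq 1$ and hence $E_P(k) = N_P(k) \geq 1$; no non-negative integer can therefore be a zero of $E_P$, and every integral zero has the form $-m$ with $m \geq 1$. Next, the reciprocity clause of Theorem~\ref{thm:Ehrhart} computes $(-1)^n E_P(-m)$ as the number of integer points in the interior of $-mP$; since negation $x \mapsto -x$ is a lattice automorphism of $\bR^n$ sending $\mathrm{int}(-mP)$ onto $\mathrm{int}(mP)$, this count equals $|\mathrm{int}(mP) \cap \bZ^n|$. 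Hence $-m$ is an integral zero of $E_P$ if and only if $mP$ has no interior lattice point.

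The heart of the argument is a monotonicity statement: if $mP$ has an interior integer point, then so does $(m+1)P$. Given $v \in \mathrm{int}(mP) \cap \bZ^n$ and any integral vertex $p_0$ of $P$, the point $w = v + p_0 \in \bZ^n$ satisfies
\[\frac{w}{m+1} = \frac{m}{m+1} \cdot \frac{v}{m} + \frac{1}{m+1} \cdot p_0,\]
a convex combination of $v/m \in \mathrm{int}(P)$ (with positive weight) and $p_0 \in P$, which therefore lies in $\mathrm{int}(P)$; so $w \in \mathrm{int}\big((m+1)P\big) \cap \bZ^n$. Consequently the set $\{m \geq 1 : mP \text{ has no interior lattice point}\}$ is downward closed in the positive integers, hence of the form $\{1, 2, \ldots, n_P\}$ for a unique integer $n_P \geq 0$ (where $n_P = 0$ encodes the empty set, \ie~$P$ itself has an interior lattice point).

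Combining the previous two paragraphs, the integral zeros of $E_P$ are precisely $\{-n_P, -n_P+1, \ldots, -1\}$. The bound $n_P \leq n$ is automatic: $E_P$ has degree $n$ with positive leading coefficient $\mathrm{vol}(P)$, so it cannot vanish at more than $n$ distinct integers. The final characterisation of $n_P$ follows from the same negation symmetry: $-(k+1)P$ has an interior integer point iff $(k+1)P$ does, and by the monotonicity just established the minimal such $k \geq 0$ is precisely $n_P$. I expect no serious obstacle; the only non-routine ingredient is the short convex-combination step that upgrades the trivial inclusion $v + p_0 \in (m+1)P$ to a statement about \emph{interior} lattice points.
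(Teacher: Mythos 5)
Your proof is correct and supplies exactly the details the paper omits: the paper states the corollary with no proof, declaring it to follow ``from the geometric meaning of the Ehrhart polynomial.'' The one genuinely non-formal ingredient you correctly identify and handle is the monotonicity step (interior lattice point of $mP$ gives one of $(m+1)P$ via a strict convex combination with an integral vertex), which is what makes the set of integral zeros a contiguous block; the remaining observations (reciprocity, negation symmetry, degree bound) are routine reads of Theorem~\ref{thm:Ehrhart}.
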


\subsection{The main result}

We can now formulate a preliminary version of the main result of this
paper.

\begin{theorem}
  \label{thm:main}
  Let $P \subseteq \bR^n$ be an $n$\nbd-dimensional polytope with
  integral vertices, and let~$n_P$ denote the number of distinct
  integral roots of its \textsc{Ehrhart\/} polynomial. Let~$R$ be a
  ring with unit. Suppose that $R$~is commutative, or else left
  \textsc{noether}ian. Then there is a homotopy equivalence of
  $K$\nbd-theory spaces
  \[K(P) \simeq K(R)^{n_P+1} \times K(P, [n_P])\] where $K(R)$~denotes
  the $K$\nbd-theory space of the ring~$R$, defined using perfect
  complexes of $R$\nbd-modules, and $K(P, [n_P])$ denotes the
  $K$\nbd-theory space of a certain subcategory of the
  category~$\perf(P)$.
\end{theorem}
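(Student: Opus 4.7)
The plan is to split off the $n_P+1$ copies of~$K(R)$ one at a time, by induction on $j \in \{0, 1, \ldots, n_P\}$, using the twisting sheaves $\mathcal{O}_{X_P}(-j)$ as explicit sections and Waldhausen's additivity theorem for the splitting.

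First, by Theorem~\ref{thm:Cech_finite} each integer $k \geq 0$ gives an exact functor
\[\check\Gamma_k \colon \perf(P) \rTo \perf(R), \qquad \mathcal{F} \mapsto \check\Gamma\big(\mathcal{F}(k)\big)\]
of complicial bi\textsc{Waldhausen} categories. For $j = -1, 0, \ldots, n_P$ let $\perf(P)_{[j]} \subseteq \perf(P)$ denote the full subcategory of those~$\mathcal{F}$ for which $\check\Gamma_k(\mathcal{F})$ is acyclic for all $0 \leq k \leq j$; this is a complicial bi\textsc{Waldhausen} subcategory closed under extensions, cofibres and weak equivalences, with $\perf(P)_{[-1]} = \perf(P)$ and $K(P, [n_P]) = K(\perf(P)_{[n_P]})$.

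Next I would define the section
\[s_j \colon \perf(R) \rTo \perf(P), \qquad M \mapsto M \tensor_R \mathcal{O}_{X_P}(-j),\]
using the twisting sheaves developed in~\S2. Because $\check\Gamma$ is $R$-linear face by face and twisting commutes with tensoring over~$R$, one has a natural isomorphism
\[\check\Gamma_k\big(s_j(M)\big) \iso M \tensor_R \check\Gamma\big(\mathcal{O}_{X_P}(k-j)\big).\]
For $0 \leq k < j$ the integer $j-k$ lies in $\{1, \ldots, n_P\}$; combining Corollary~\ref{cor:zeros_of_Ehrhart} with the characterisation of $n_P$ as the minimal $m \geq 0$ for which $\mathcal{O}_{X_P}(-m-1)$ is not acyclic (established in~\S2) shows $\mathcal{O}_{X_P}(-(j-k))$ is acyclic, so $s_j(M) \in \perf(P)_{[j-1]}$. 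For $k = j$ we obtain $\check\Gamma_j\big(s_j(M)\big) \iso M \tensor_R R \iso M$, so $\check\Gamma_j \circ s_j \simeq \id_{\perf(R)}$. In fact $s_j$ is the derived left adjoint of~$\check\Gamma_j$, and the adjunction counit produces a natural cofibration $s_j\check\Gamma_j(\mathcal{F}) \rTo \mathcal{F}$ in $\perf(P)_{[j-1]}$ whose cofibre~$C(\mathcal{F})$ satisfies $\check\Gamma_k\big(C(\mathcal{F})\big) = 0$ for all $0 \leq k \leq j$ (by the acyclicity assumptions on~$\mathcal{F}$ and the computation of $\check\Gamma_k \circ s_j$ above), and hence lies in~$\perf(P)_{[j]}$.

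By Waldhausen's additivity theorem, this cofibration of exact endofunctors of $\perf(P)_{[j-1]}$ yields a product decomposition
\[K\big(\perf(P)_{[j-1]}\big) \simeq K(R) \times K\big(\perf(P)_{[j]}\big),\]
where the first factor is the image of $(s_j)_\ast$ and the second that of the cofibre functor. Iterating for $j = 0, 1, \ldots, n_P$ produces the desired equivalence $K(P) \simeq K(R)^{n_P+1} \times K(P, [n_P])$. The main obstacle is the rigorous construction of the cofibration $s_j\check\Gamma_j(\mathcal{F}) \rTo \mathcal{F}$ at the chain level, which requires producing a strict rather than merely derived counit: one must use Lemma~\ref{lem:perf_qis_hvect} to replace $\mathcal{F}$ by a homotopy vector bundle and check that the resulting map is a degreewise split monomorphism of presheaves (the cofibrations in~$\perf(P)$), naturally in~$\mathcal{F}$. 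A secondary subtlety arises in the commutative non-\textsc{noether}ian setting, where Theorem~\ref{thm:Cech_finite} itself requires \textsc{noether}ian descent, so one must verify that all constructions commute with filtered colimits of coefficient rings; this holds because tensoring by perfect modules and forming \v Cech complexes both preserve such colimits.
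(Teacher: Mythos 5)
Your proposal shares the paper's decomposition into the nested subcategories $\perf(P)^{[j]}$ and the section functors $\psi_{-j} = \mathcal{O}(-j)\tensor -$, but diverges in the mechanism used to split off each factor of $K(R)$. The paper applies \textsc{Waldhausen}'s Fibration Theorem to the sequence $(\perf(P)^{[\ell]},w) \rTo (\perf(P)^{[\ell-1]},w) \rTo (\perf(P)^{[\ell-1]},h_{[\ell]})$ and then identifies $K(\perf(P)^{[\ell-1]},h_{[\ell]})$ with $K(R)$ via $\check\Gamma\circ\theta_\ell$; you instead propose a cofibration sequence of endofunctors $s_j\check\Gamma_j \rTo \id \rTo C$ and appeal to the Additivity Theorem. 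There are two problems with your plan.

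First, a factual error: $\check\Gamma_j\circ s_j$ is \emph{not} equivalent to the identity. By Lemmas~\ref{lem:susp_limit} and~\ref{lema:con_psi0}, $\check\Gamma\big(\mathcal{O}(0)\tensor C\big)$ is naturally quasi-isomorphic to the $n$th suspension $C[n]$, not to $C$ itself (indeed $\check\Gamma\,\mathcal{O}(0)\simeq R[n]$ by Proposition~\ref{prop:cohomology}, not $R$). This already undercuts the claim that $s_j$ is a derived left adjoint of $\check\Gamma_j$ with identity unit and counit; one can repair the statement with a shift, but the point is that you cannot simply cite an adjunction.

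Second, and more seriously, the ``natural cofibration $s_j\check\Gamma_j(\mathcal{F}) \rTo \mathcal{F}$'' which your additivity argument requires does not exist, and the part of the paper you would want to invoke shows why. The comparison between $\psi_0\circ\check\Gamma$ and the $n$th suspension is realised only as a zig-zag of natural maps $Y[n] \rTo \sigma Y \lTo \psi_0\check\Gamma(Y)$ (Lemmas~\ref{lemma:susp_is_sigma} and~\ref{lem:gamma_psi0_gamma}); both arrows point \emph{toward} the intermediate functor $\sigma Y$, and the left-hand arrow is only a weak equivalence, not an isomorphism. There is therefore no natural transformation of functors of Waldhausen categories from $\psi_0\check\Gamma$ to the identity (or to the suspension). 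Your suggestion to ``use Lemma~\ref{lem:perf_qis_hvect} to replace $\mathcal{F}$ by a homotopy vector bundle and check that the resulting map is a degreewise split monomorphism'' does not produce such a map either, because there is no candidate map to check. The Fibration Theorem route used in the paper is designed precisely to avoid needing a strict counit: one only has to know that $\check\Gamma\circ\theta_\ell$ is a bijection on homotopy groups of the quotient $K$-theory space, and that follows from the zig-zag of $h_{[\ell]}$-equivalences without ever inverting anything at the chain level.
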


In view of Proposition~\ref{prop:KXPisKP}, this implies a splitting
result for the algebraic $K$\nbd-theory of projective toric
$R$\nbd-schemes provided $R$~is a commutative ring with unit.

The proof of Theorem~\ref{thm:main} will be given in
\S\ref{sec:further-splitting}, and will contain explicit descriptions
of the homotopy equivalence and of $K(P, [n_P])$.

\section{\textsc{\v Cech\/} cohomology}

\subsection{Double complexes and the spectral sequence argument}

We work over an arbitrary unital ring~$R$. Let $D_{*,*}$ be a double
complex of $R$\nbd-modules; that is, we are given $R$\nbd-modules
$D_{p,q}$ for $p,q \in \bZ$, ``horizontal'' and ``vertical''
differentials
\[\partial^h \colon D_{p,q} \rTo D_{p-1,q} \qquad\text{and}
\qquad \partial^v \colon D_{p,q} \rTo D_{p,q-1}\] with $\partial^h
\circ \partial^h = 0$ and $\partial^v \circ \partial^v = 0$, such
that
\[\partial^h \circ \partial^v = -\partial^v \circ \partial^h \ .\] Its
{\it total complex\/} $\mathrm{Tot}\,D_{*,*}$ is a chain complex with
$\mathrm{Tot} (D_{*,*})_n = \bigoplus_{p+q=n} D_{p,q}$ and
differential $\partial = \partial^h + \partial^v$,
cf.~\cite[1.2.6]{Weibel-intro}.

Let $D^\prime_{*,*}$ be another double complex of $R$\nbd-modules. A
{\it map of double complexes\/} $f \colon D_{*,*} \rTo D^\prime_{*,*}$
is a collection of $R$\nbd-linear maps $D_{p,q} \rTo D^\prime_{p,q}$
which commute with vertical and horizontal differentials.

\begin{proposition}[The spectral sequence argument]
  \label{prop:ss_argument}
  Let $D_{*,*}$ be a double complex of $R$\nbd-modules. Suppose that
  $D_{*,*}$ is concentrated in the first $n+1$ columns so that
  $D_{p,*} = 0$ if $p<0$ or $p>n$, or that $D_{*,*}$ is concentrated in the first $n+1$ rows so that
  $D_{*,q} = 0$ if $q<0$ or $q>n$
  \begin{enumerate}
  \item There is a convergent spectral sequence
    \begin{equation}
      \label{eq:ss}
      E^1_{p,q} = H^h_q (D_{*,p}) \Longrightarrow H_{p+q}
      \mathrm{Tot}\, D_{*,*}
    \end{equation}
    with $E^2_{p,q} = H^v_p H^h_q (D_{*,*})$. Here $H^h$ denotes
    taking homology modules with respect to the horizontal
    differential~$\partial^h$, and~$H^v$ denotes taking homology
    modules with respect to the vertical differential~$\partial^v$.
  \item Suppose that $D^\prime_{*,*}$ is another double complex of
    $R$\nbd-modules concentrated in the first $n+1$ columns or
    rows. Suppose that the map of double complexes $f \colon D_{*,*}
    \rTo D^\prime_{*,*}$ induces an isomorphism on horizontal homology
    modules. Then $f$~induces a quasi-isomorphism
    \[\mathrm{Tot}\,(f) \colon \mathrm{Tot}\, D_{*,*} \rTo^\simeq
    \mathrm{Tot}\, D^\prime_{*,*} \ .\]
  \end{enumerate}
\end{proposition}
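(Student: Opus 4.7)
The plan is to handle both parts by setting up the spectral sequence of a filtered chain complex, using the row filtration of the total complex in both the row-bounded and column-bounded cases.

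First I would introduce the increasing filtration on $\mathrm{Tot}\, D_{*,*}$ by rows:
\[
F_r (\mathrm{Tot}\, D_{*,*})_n \;=\; \bigoplus_{\substack{p+q=n\\ q \leq r}} D_{p,q} \ .
\]
Since $\partial^h$ preserves $q$ and $\partial^v$ decreases $q$ by one, this is a filtration by subcomplexes. The first and slightly more subtle check is that this filtration is bounded in each total degree in both hypotheses of the proposition. When rows vanish outside $[0,n]$ this is immediate, with $F_{-1}=0$ and $F_n = \mathrm{Tot}\, D_{*,*}$ globally. When columns vanish outside $[0,n]$ the only $D_{p,q}$ contributing to $(\mathrm{Tot}\, D)_m$ have $0 \leq p \leq n$, so $q = m-p$ ranges over $[m-n,m]$, giving $F_{m-n-1}(\mathrm{Tot}\, D)_m = 0$ and $F_m (\mathrm{Tot}\, D)_m = (\mathrm{Tot}\, D)_m$. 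Either way the filtration is bounded on each degree, which is what will guarantee strong convergence.

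Next I would invoke the standard spectral sequence of a filtered chain complex (as in \cite[\S5.4--5.5]{Weibel-intro}). The associated graded piece $F_r/F_{r-1}$ in total degree $r+s$ is $D_{s,r}$ with differential inherited from $\partial$; since $\partial^v$ drops filtration, only $\partial^h$ survives on $E^0$, yielding $E^0_{p,q} = D_{q,p}$ with $d^0 = \partial^h$. Passing to homology gives $E^1_{p,q} = H^h_q (D_{*,p})$, and the induced $d^1$ is the map on horizontal homology induced by $\partial^v$, so $E^2_{p,q} = H^v_p H^h_q(D_{*,*})$. Boundedness of the filtration in each degree yields strong convergence to $H_{p+q}(\mathrm{Tot}\, D_{*,*})$, proving (1).

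For part (2) I would note that a map of double complexes $f$ carries the row filtration to the row filtration and hence induces a morphism of spectral sequences. By hypothesis $f$ induces an isomorphism on $E^1$, so by induction an isomorphism on every $E^r$ and on $E^\infty$. Since in both cases the filtrations on $\mathrm{Tot}\, D_{*,*}$ and $\mathrm{Tot}\, D^\prime_{*,*}$ are bounded on each degree, the induced filtrations on the homologies of the total complexes are finite, so an isomorphism on the associated graded $E^\infty$ pulls back to an isomorphism on the abutments themselves. Thus $\mathrm{Tot}(f)$ is a quasi-isomorphism. The only place requiring any care is the uniform treatment of the two cases through the row filtration in step one; everything else is the standard machinery.
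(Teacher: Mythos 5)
Your proof is correct and takes essentially the same approach as the paper: the row filtration on $\mathrm{Tot}\,D_{*,*}$, with the paper simply citing Weibel [\S5.6] and [5.2.5] where you explicitly verify local boundedness of the filtration in both the column-bounded and row-bounded cases and carry out the comparison argument.
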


\begin{proof}
  The spectral sequence in~(1) arises in the standard way from a
  filtration of~$\mathrm{Tot}\,D_{*,*}$ by the rows of~$D_{*,*}$, see
  \cite[\S5.6]{Weibel-intro} for details of the construction, and
  \cite[5.2.5]{Weibel-intro} for convergence. The result of~(2) now
  follows from convergence of the bounded spectral sequences for
  $D_{*,*}$ and~$D^\prime_{*,*}$, together with the fact that by
  hypothesis $f$~induces an isomorphism of spectral sequences on the
  $E^1$\nbd-term.
\end{proof}

\subsection{\textsc{\v Cech} cohomology}

Assume now that we have oriented the faces of~$P$ so that we have
incidence numbers $[F:G] \in \{-1,\, 0,\, 1\}$ at our disposal.

\begin{definition}
  \label{def:cech_complex}
  \begin{enumerate}
  \item \label{item:cech1} Given a diagram $A \colon F(P)_0 \rTo
    \RMod$ we define its {\it \textsc{\v Cech\/} complex\/} to be the
    bounded chain complex $\check \Gamma_P(A) = \check \Gamma (A)$
    given by
    \[\check \Gamma (A)_s:= \bigoplus_{\dim F = n-s} A^F \ ,\] the sum
    extending over all non-empty faces of~$P$, with differentials given
    by $A^G \lTo[l>=3em]^{[F:G]} A^F$ for the pair $F \subseteq G$ of
    non-empty faces of~$P$.
  \item \label{item:cech2} Let $Y \colon F(P)_0 \rTo \Ch(R)$ be a
    diagram of chain complexes of $R$\nbd-modules. We define the {\it
      \textsc{\v Cech} complex $\check \Gamma_P (Y) = \check \Gamma
      (Y)$ of~$Y$} to be the total complex of the double chain complex
    of $R$\nbd-modules
    \begin{equation}
      \label{eq:associated_double_complex}
      D_{s,t} (Y) = D_{s,t} = \bigoplus_{\dim F = n-s} Y_t^F
    \end{equation}
    with horizontal differentials given by $Y_t^G \lTo[l>=3em]^{[F:G]}
    Y_t^F$ and vertical differential given by the differential in~$Y$
    multiplied by the sign~$(-1)^s$, cf.~\cite[1.2.5]{Weibel-intro}.
  \item \label{item:cech3} Any presheaf $Y \in \pre(P)$ can be
    considered as a diagram of chain complexes of $R$\nbd-modules, and
    we define its {\it \textsc{\v Cech\/} complex\/} $\check \Gamma_P
    (Y) = \check \Gamma(Y)$ as in~(\ref{item:cech2}).
  \end{enumerate}
\end{definition}

\begin{remark}
  \label{rem:on_check_Gamma}
  \begin{enumerate}
  \item \label{item:limits} The homology modules of~$\check \Gamma(A)$
    in~\ref{def:cech_complex}~(\ref{item:cech1}) are isomorphic to
    higher derived inverse limits of the diagram~$A$; more precisely,
    $\lim{}^k (A) \iso H_{n-k} \check \Gamma(A)$. See
    \cite[2.19]{H-finiteness} for a proof.
  \item If $A \colon F(P)_0 \rTo \RMod$ is a constant diagram with
    value $A^F = M$ for all~$F$, then $H_n \check \Gamma(A) = M$ and
    $H_k \check \Gamma(A) = 0$ for $k \ne n$. This follows easily
    from~(1), or from the observation that $\check \Gamma(A)$~is dual
    to the chain complex computing cellular homology of the
    polytope~$P$ with coefficients in~$M$, up to re-indexing.
  \item If $R$ is a commutative ring and $Y \in \qco(P)$ is
    concentrated in chain degree~$0$, then $Y$ determines a
    quasi-coherent sheaf~$\mathcal{F}$ on the scheme~$X_P$. By
    \cite[2.18]{H-finiteness} we have isomorphisms $H_{n-k} \check
    \Gamma (Y) \iso H^k (X_P,\, \mathcal{F})$, the $k$th cohomology
    module of~$X_P$ with coefficients in the sheaf~$\mathcal{F}$.
  \item For the diagram~$Y$ in
    \ref{def:cech_complex}~(\ref{item:cech2}) $D_{*,*}$ is
    concentrated in the first $n+1$ columns so that $D_{p,*} = 0$ if
    $p<0$ or $p>n$. We have $D_{*,t} = \check \Gamma (Y_t)$ which is a
    chain complex computing $\lim{}^{n-*} Y_t$. The double chain
    complex $D_{*,*} (Y)$ thus gives rise to a convergent
    (homological) spectral sequence
    \begin{equation}
      \label{eq:ss_2}
      E^1_{s,t}(Y) = \lim_\leftarrow{}^{n-s} (Y_t) \Longrightarrow
      H_{s+t} \check \Gamma (Y) \ ,
   \end{equation}
    cf.~Proposition~\ref{prop:ss_argument}~(1).
  \end{enumerate}
\end{remark}

\begin{remark}
  \label{rem:other_ss}
  There is another standard spectral sequence which we will have
  occasion to use. Let $Y \in \pre(P)$, or more generally, let $Y$~be
  a diagram $F(P)_0 \rTo \Ch(R)$. Denote by $D_{*,*}$ the double
  complex of $R$\nbd-modules associated
  to~$Y$~\eqref{eq:associated_double_complex}. Filtration by columns
  yields a convergent $E^1$\nbd-spectral sequence
  \[E^1_{p,q}(Y) = \bigoplus_{\dim F = n-p} H_q (Y^F) \Longrightarrow
  H_{p+q} \check \Gamma (Y) \ ,\] cf.~\cite[5.6.1]{Weibel-intro}; by
  Remark~\ref{rem:on_check_Gamma}~(\ref{item:limits}), $E^2_{p,q}(Y) =
  \lim\limits_{\leftarrow}{}^{n-p} H_q(Y)$.
\end{remark}

\begin{proposition}
  \label{prop:cech_invariant}
  Formation of \textsc{\v Cech\/} complexes is homotopy
  invariant. More precisely, let $f \colon Y \rTo Z$ be a map of
  $F(P)_0$\nbd-diagrams of $R$\nbd-module chain complexes. Suppose
  that for each $F \in F(P)_0$ the $F$-component of~$f$ is a
  quasi-isomorphism. Then $f$~induces a quasi-isomorphism $\check
  \Gamma(Y) \rTo \check \Gamma (Z)$.
\end{proposition}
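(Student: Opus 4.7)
The plan is to reduce the statement to the spectral sequence argument of Proposition~\ref{prop:ss_argument}, applied to the double complex $D_{*,*}(Y)$ from~\eqref{eq:associated_double_complex} (and its counterpart for~$Z$). First, I would observe that $f$ induces a morphism of double complexes $D_{*,*}(f)\colon D_{*,*}(Y)\rTo D_{*,*}(Z)$: the horizontal differentials depend only on the incidence numbers $[F:G]$, so they are preserved on the nose, and the vertical differentials (the differentials of the complexes $Y^F$, twisted by the uniform sign $(-1)^s$) are preserved because $f$ is a map of diagrams of chain complexes. By definition, $\mathrm{Tot}\,D_{*,*}(f) = \check\Gamma(f)$.

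Next, both double complexes are concentrated in the $n+1$ columns $0\leq s\leq n$, so the spectral sequence of Remark~\ref{rem:other_ss}, obtained by filtering $\mathrm{Tot}\,D_{*,*}(Y)$ by columns, has bounded filtration and hence converges strongly to $H_{*}\check\Gamma(Y)$; likewise for~$Z$. Its $E^1$\nbd-page is
\[
E^1_{p,q}(Y) = \bigoplus_{\dim F = n-p} H_q(Y^F) \Longrightarrow H_{p+q}\check\Gamma(Y),
\]
and $f$ induces a morphism of spectral sequences.

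The hypothesis that each component $f^F$ is a quasi-isomorphism immediately gives that $f$ induces an isomorphism on $E^1$\nbd-pages. By the classical comparison theorem for spectral sequences (\cite[5.2.12]{Weibel-intro}), which applies since both spectral sequences are bounded, $f$ induces an isomorphism on the abutments, that is, on $H_*\check\Gamma(Y)\rTo H_*\check\Gamma(Z)$. This is exactly the assertion that $\check\Gamma(f)$ is a quasi-isomorphism.

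There is no real obstacle here. Alternatively, one could apply Proposition~\ref{prop:ss_argument}~(2) directly to the transposed double complex (the transposition is concentrated in the first $n+1$ rows, and the new horizontal homology coincides with the old vertical homology, on which $f$ is clearly an isomorphism by hypothesis); this is essentially the same argument repackaged. The only point requiring minor care is to note that the boundedness in the column direction, rather than boundedness of the individual complexes $Y^F$ or $Z^F$, is what guarantees convergence of the column-filtration spectral sequence.
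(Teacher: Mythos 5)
Your proof is correct and follows exactly the same route as the paper: invoke the column-filtration spectral sequence of Remark~\ref{rem:other_ss}, note that the hypothesis makes $f$ an isomorphism on $E^1$-pages, and conclude via convergence of the bounded spectral sequences. The additional remarks you make (the explicit check that $f$ gives a map of double complexes, the aside about Proposition~\ref{prop:ss_argument}~(2) applied to the transpose) are fine but add nothing beyond what the paper's two-line argument already assumes implicitly.
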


\begin{proof}
  Consider the spectral sequence~\ref{rem:other_ss} for~$Y$
  and~$Z$. By hypothesis, the map~$f$ induces an isomorphism of
  $E^1$\nbd-spectral sequences $E^1_{*,*}(Y) \iso E^1_{*,*}(Z)$, hence
  induces an isomorphism of their abutments. But this is just a
  reformulation of the claim.
\end{proof}

\subsection{Line bundles determined by~$P$}

The polytope~$P$ determines a family of objects of~$\qco(P)$ as
follows. For $k \in \bZ$ we define
\[\mathcal{O}(k) \colon F \mapsto \mathcal{O}(k)^F = R[(kF + T_F) \cap
\bZ^n]\ ,\] considered as a diagram of chain complexes concentrated in
degree~$0$. Here $T_F$~is the tangent cone of~$P$ at~$F$, and $kF +
T_F = \{kf + v \,|\, f \in F,\, v \in T_F\}$ is the \textsc{Minkowski}
sum of the dilation~$kF$ of~$F$ and the cone~$T_F$. The symbol $R[S]$
means the free $R$\nbd-module with basis~$S$.

It is not difficult to see that $\mathcal{O}(k)$ is an object
of~$\qco(P)$; the $A_F$\nbd-module structure of~$\mathcal{O}(k)^F$ is
induced by the translation action of the monoid $T_F \cap \bZ^n$ on
the set of integral points in~$kF+T_F$. In fact, $\mathcal{O}(k)^F$ is
a free $A_F$\nbd-module of rank~$1$.

\begin{proposition}
\label{prop:cohomology}
  For $k \in \bZ$ and $j \in \bN$ there is an isomorphism
  \[H_j \check \Gamma (\mathcal{O} (k)) \iso
  \begin{cases}
    R[kP \cap \bZ^n] & \text{if } j=n \text{ and }
    k \geq 0 \ , \\
    R[({\mathrm{int}}\,kP) \cap \bZ^n] & \text{if }j=0 \text{ and } k <
    0\ , \\
    0 & \text{otherwise.}
  \end{cases}\]
\end{proposition}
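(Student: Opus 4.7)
My plan is to decompose the \textsc{\v Cech\/} complex by $\bZ^n$\nbd-grading, identify each graded piece with the (reindexed) relative cellular cochain complex of the pair $(P, B(v))$ for an appropriate subcomplex $B(v) \subseteq \partial P$ depending on $v$ and $k$, and analyse the topology of that pair.

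Each free module $\mathcal{O}(k)^F = R[(kF + T_F) \cap \bZ^n]$ is $\bZ^n$\nbd-graded with components free of rank $1$, and for $F \subseteq G$ the structure map $\mathcal{O}(k)^F \rTo \mathcal{O}(k)^G$ is the inclusion of graded subspaces induced by $kF + T_F \subseteq kG + T_G$. Hence $\check \Gamma (\mathcal{O}(k))$ splits as a direct sum $\bigoplus_{v \in \bZ^n} C(v)$ of complexes of free $R$\nbd-modules. Writing $P = \{x \in \bR^n : \langle u_H, x \rangle \geq a_H \text{ for each facet } H\}$ with inward normals $u_H$, one finds $T_F = \{w : \langle u_H, w \rangle \geq 0 \text{ for each facet } H \supseteq F\}$; since $\langle u_H, f \rangle = a_H$ for $f \in F \subseteq H$, the condition $v \in kF + T_F$ simplifies, uniformly in the choice of $f \in F$, to $\langle u_H, v \rangle \geq k a_H$ for every facet $H \supseteq F$. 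Put $\mathcal{B}(v) = \{H \text{ facet of } P : \langle u_H, v \rangle < k a_H\}$ and $B(v) = \bigcup_{H \in \mathcal{B}(v)} H \subseteq \partial P$; a face $F$ contributes to $C(v)$ iff no facet containing $F$ lies in $\mathcal{B}(v)$, iff $F \not\subseteq B(v)$. After the reindexing $s \leftrightarrow n - s$ that interchanges \textsc{\v Cech\/} and cellular cochain degrees, $C(v)$ becomes the relative cellular cochain complex $C^*_{\mathrm{cell}}(P, B(v); R)$, so $H_j\, C(v) \iso H^{n-j}(P, B(v); R)$.

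The computation now reduces to the topology of $(P, B(v))$ via its long exact sequence and the contractibility of $P$. Three cases arise. (1) If $B(v) = \emptyset$, the sequence gives $C(v) \simeq R$ concentrated in \textsc{\v Cech\/} degree $n$; this occurs precisely when $v \in kP$. A \textsc{Farkas}\nbd-type argument using a strictly positive relation $\sum \mu_H u_H = 0$ among the inward normals (which exists because $P$ is bounded, and satisfies $\sum \mu_H a_H < 0$ because $P$ has nonempty interior) shows $B(v) = \emptyset$ forces $k \geq 0$. (2) If $B(v) = \partial P$, then $H^*(P, \partial P; R) \iso R$ in degree $n$, so $C(v) \simeq R$ in \textsc{\v Cech\/} degree $0$; this occurs precisely when $\langle u_H, v \rangle < k a_H$ for every facet $H$, equivalently $v \in \mathrm{int}(kP)$, and the same \textsc{Farkas} argument forces $k < 0$. (3) Otherwise $\emptyset \subsetneq B(v) \subsetneq \partial P$, and the heart of the proof is to verify that $B(v)$ is contractible; if so, $(P, B(v))$ is acyclic and $C(v)$ is exact.

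The contractibility of $B(v)$ in case~(3) is the main obstacle. For $k > 0$ and $v \notin kP$, $B(v)$ is the visible part of $\partial P$ from the external point $v/k$, and radial projection from $v/k$ onto any hyperplane strictly separating $v/k$ from $P$ embeds $B(v)$ as a convex polytopal region. For $k < 0$ with $v/k \notin P$, $B(v)$ is dually the invisible part of $\partial P$ seen from $v/k$, again convex up to homeomorphism. For $k < 0$ with $v/k \in \partial P$, the minimal face $F^* \ni v/k$ is a proper face of $P$; one checks that $B(v)$ equals $\partial P$ minus the open star of $F^*$ in $\partial P$, and since this open star is an open $(n-1)$\nbd-ball, its complement in $\partial P \iso S^{n-1}$ is a closed ball. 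The residual case $k = 0$, $v \neq 0$ is handled as the limit $v/k \to \infty$ in the direction of $v$: $B(v) = \{H : \langle u_H, v \rangle < 0\}$ is the visible boundary from infinity and is contractible by projection onto a hyperplane orthogonal to $v$. Summing the contributions over $v \in \bZ^n$ then yields the stated formulae.
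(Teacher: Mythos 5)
Your argument — decomposing $\check\Gamma\big(\mathcal{O}(k)\big)$ into its $\bZ^n$-graded pieces, identifying the $v$-component with the relative cellular cochain complex of $(P, B(v))$, and reducing to the topology of that pair — is exactly the computation the paper delegates to its cited reference \cite[2.5.3]{H-nonlin}. The case analysis and the key geometric input (contractibility of $B(v)$ whenever $\emptyset \subsetneq B(v) \subsetneq \partial P$) are correctly isolated, though the contractibility verifications are sketches that would need to be fleshed out for a fully self-contained proof.
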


\begin{proof}
  For $R = \bC$ this is the standard calculation of cohomology of the
  line bundles $\mathcal{O}(k)$ on the toric variety~$X_P$. Details
  are contained, for example, in \cite[2.5.3]{H-nonlin}; the
  calculation given there remains valid for arbitrary rings with unit.
\end{proof}

It follows from Theorem~\ref{thm:Ehrhart} that the number~$n_P$
defined in Corollary~\ref{cor:zeros_of_Ehrhart} can be characterised
as being minimal among those $k \geq 0$ for which $\mathcal{O}(-k-1)$
is not acyclic (\ie, $\check \Gamma \mathcal{O}(-k-1)$~is not
acyclic).

\subsection{Twisting sheaves}

\begin{definition}
  Let $Y \in \pre(P)$ and $k \in \bZ$. We define the $k$th {\it
    twist\/} of~$Y$, denoted $Y(k)$, as the objectwise tensor product
  of $\mathcal{O}(k)$ and~$Y$. Explicitly,
  \[Y(k) \colon F \mapsto Y(k)^F = \mathcal{O}(k)^F \tensor_{A_F} Y^F
  \ ,\] with structure maps induced by those of~$\mathcal{O}(k)$
  and~$Y$.
\end{definition}

By definition $Y(k)^F$ is isomorphic to the tensor product of~$Y^F$
with a free $A_F$\nbd-module of rank~$1$, hence $Y(k)^F$ is
non-canonically isomorphic to~$Y^F$. The following properties are
easily verified:

\begin{lemma}
  \begin{enumerate}
  \item For $Y \in \pre(P)$ and $k, \ell \in \bZ$ there is an
    isomorphism $Y(k)(\ell) \iso Y(k+\ell)$. Moreover $Y(0) \iso Y$,
    this last isomorphism being natural in~$Y$.
  \item If $Y \in \qco(P)$ then $Y(k) \in \qco(P)$ for every $k \in
    \bZ$.
  \item If $Y \in \hco(P)$ then $Y(k) \in \hco(P)$ for every $k \in
    \bZ$.
  \item If $Y \in \perf(P)$ then $Y(k) \in \perf(P)$ for every $k \in
    \bZ$.\qed
  \end{enumerate}
\end{lemma}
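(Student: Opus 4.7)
My plan is to reduce everything to a single structural observation about the twisting sheaves and then feed this into formal tensor-product manipulations. The key observation is that, for every non-empty face~$F$ and every $k\in\bZ$, the module $\mathcal{O}(k)^F = R[(kF+T_F)\cap\bZ^n]$ is free of rank~$1$ as an $A_F$\nbd-module. To establish this I would fix a vertex~$v$ of~$F$ (necessarily integral, since $P$ has integral vertices) and observe that for every $f\in F$ both $f-v$ and $v-f$ lie in~$T_F$, so that $k(f-v)\in T_F$ for all $k\in\bR$; hence $kF+T_F = kv+T_F$ and $\mathcal{O}(k)^F = R[kv+S_F]$, which is free over $A_F = R[S_F]$ on the basis element~$kv$. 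In particular $\mathcal{O}(0)^F = A_F$, and the assignment $(kv+s)\otimes(\ell v+s')\mapsto (k+\ell)v+s+s'$ yields a natural $A_F$\nbd-linear isomorphism $\mathcal{O}(k)^F\tensor_{A_F}\mathcal{O}(\ell)^F \iso \mathcal{O}(k+\ell)^F$, manifestly compatible with the structure maps for $F\subseteq G$.

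Granted this, part~(1) is a direct consequence of associativity of the tensor product combined with the identity $\mathcal{O}(0)^F=A_F$. Part~(4) is equally formal: tensoring the perfect (respectively strict perfect) complex~$Y^F$ over~$A_F$ with the free rank-one $A_F$\nbd-module $\mathcal{O}(k)^F$ produces a perfect (respectively strict perfect) complex of $A_F$\nbd-modules.

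For parts~(2) and~(3), I would combine the sheaf property of $\mathcal{O}(k)$ with that of~$Y$ via the fact that $A_F\to A_G$ is a central localisation (Lemma~\ref{lem:F_G_loc}). Central localisations commute with tensor products, which gives a natural isomorphism
\[A_G\tensor_{A_F}\bigl(\mathcal{O}(k)^F\tensor_{A_F}Y^F\bigr) \iso \bigl(A_G\tensor_{A_F}\mathcal{O}(k)^F\bigr)\tensor_{A_G}\bigl(A_G\tensor_{A_F}Y^F\bigr);\]
postcomposing with the adjoint structure maps of $\mathcal{O}(k)$ and~$Y$ then realises the adjoint structure map of~$Y(k)$ as this composite. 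For~(2), both postcomposed factors are isomorphisms and we obtain the required~$\iso$; for~(3), the $Y$\nbd-factor becomes merely a quasi-isomorphism, but tensoring with the free rank-one $A_G$\nbd-module $\mathcal{O}(k)^G$ preserves quasi-isomorphisms, so the composite is still a quasi-isomorphism.

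No step is particularly delicate. The only mild subtlety, which I would flag early in the argument, is that for $k<0$ the scaled face $kF$ may sit outside~$P$ and need not meet~$\bZ^n$, so the freeness of $\mathcal{O}(k)^F$ cannot be read off from $kF$ directly and has to be extracted by means of a vertex of~$F$ as above.
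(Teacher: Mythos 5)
The paper offers no proof of this lemma at all --- the text asserts the claims are ``easily verified'' and ends the lemma with a bare \verb|\qed| --- so there is no argument to compare against; you are filling in what the author regarded as routine. Your verification is correct and proceeds in what is surely the intended way: reduce everything to the fact that $\mathcal{O}(k)^F$ is free of rank one over $A_F$ (which the paper also states without proof just above the lemma), establish the multiplication isomorphism $\mathcal{O}(k)^F \tensor_{A_F} \mathcal{O}(\ell)^F \iso \mathcal{O}(k+\ell)^F$, and then feed these into standard tensor manipulations together with Lemma~\ref{lem:F_G_loc}. Your vertex argument for freeness is clean and correct: both $f-v$ and $v-f$ lie in the tangent cone, so $kF+T_F = kv+T_F$ and integrality of $v$ gives $\mathcal{O}(k)^F = R[kv+S_F]$.

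Two small remarks worth tidying. First, your isomorphism $\mathcal{O}(k)^F \tensor_{A_F} \mathcal{O}(\ell)^F \iso \mathcal{O}(k+\ell)^F$ is written in terms of a chosen vertex $v$, but you then claim it is ``natural'' and ``manifestly compatible with the structure maps'' --- to make that genuinely manifest you should observe that the formula is just the restriction of the monoid multiplication $R[\bZ^n] \tensor_R R[\bZ^n] \to R[\bZ^n]$, hence independent of $v$ and automatically compatible with the inclusion structure maps; the vertex only enters to verify that this canonical map is an isomorphism. Second, by Definition~\ref{def:perfect} membership in $\perf(P)$ requires not only that each $Y^F$ be a perfect complex of $A_F$\nbd-modules but also that $Y$ be a homotopy sheaf, so your part~(4) logically depends on part~(3); as written you dispatch (4) before proving (3), which is harmless but would read better with the dependence made explicit or the order swapped.
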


\subsection{Quasi-coherent functors}

\begin{definition}
  A {\it quasi-coherent functor~$Y$\/} is an object $Y \in \qco(P)$
  which is concentrated in chain degree~$0$.
\end{definition}

\begin{lemma}
  \label{lem:sections_extend}
  Let $Y$ be a quasi-coherent functor, let $F \in F(P)_0$, and let
  $s_F \in Y^F$. Then there exists $k \in \bZ$ and a map $f \colon
  \mathcal{O}(k) \rTo Y$ such that $s_F$~is in the image of the
  $F$\nbd-component of~$f$.
\end{lemma}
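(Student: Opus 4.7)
The plan is to use quasi-coherence to reduce the problem to a single element at a vertex of $F$, and then to build $f \colon \mathcal{O}(k) \to Y$ face-by-face, using the twist $\mathcal{O}(k)$ to absorb the denominators that appear when one moves away from that vertex.

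Fix a vertex $v$ of~$F$. By Lemma~\ref{lem:F_G_loc} and quasi-coherence of~$Y$, $Y^F = Y^v[x_{v,F}^{-1}]$, so we may write $s_F = x_{v,F}^{-m} \cdot \bar\sigma$ with $\bar\sigma$ the image in $Y^F$ of some $\sigma \in Y^v$ and $m \geq 0$. Since $\mathcal{O}(k)^F$ is a free $A_F$-module of rank~$1$ with generator~$kv$, any $A_F$\nbd-linear $f^F$ with $f^F(kv) = \bar\sigma$ will satisfy $f^F(x_{v,F}^{-m} \cdot kv) = s_F$, placing $s_F$ in the image of~$f^F$ as required. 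For every face $G$ of~$P$ I choose a vertex $v_G$ of~$G$, taking $v_G = v$ whenever $v \in G$, and aim to define $f^G \colon \mathcal{O}(k)^G \to Y^G$ by $f^G(kv_G) = \tau_G$ for a suitable $\tau_G \in Y^G$. For $G$ containing~$v$ the correct choice is $\tau_G$~$:=$ image of $\sigma$ under $Y^v \to Y^G$.

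For $G$ with $v \notin G$, consider the smallest face $H := v \vee G$, which does contain~$v$. Lemma~\ref{lem:F_G_loc} supplies a single central $x_{G,H} = x^{v_{G,H}}$ with $A_H = A_G[x_{G,H}^{-1}]$, so the image of $\sigma$ in~$Y^H$ can be written as $\sigma_G / x_{G,H}^{m_G}$ with $\sigma_G \in Y^G$ and $m_G \geq 0$. Using that $v_{G_1} - v_{G_2}$ lies in the lineality space of~$T_{G_2}$ whenever $G_1 \subseteq G_2$, so that $x^{k(v_{G_1} - v_{G_2})}$ is a unit in~$A_{G_2}$, the required compatibility of $f^G$ with the structure maps translates into the sufficient monoid condition $k(v_G - v) - m_G v_{G,H} \in S_G$; if that holds, then $\tau_G := x^{k(v_G - v) - m_G v_{G,H}} \cdot \sigma_G$ does the job.

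The main obstacle will be enforcing this lattice-point condition uniformly, by a single integer~$k$, across all faces of~$P$ at once. Since $v - v_G \in T_G$ by the very definition of the tangent cone, $k(v_G - v) = -k(v - v_G)$ lies in~$T_G$ whenever $k \leq 0$; and because $v_{G,H}$ is the direction from~$G$ into $H = v \vee G$ — a direction compatible with~$v - v_G$ — for $-k$ sufficiently large the offset $m_G v_{G,H}$ is absorbed, leaving the difference in $T_G \cap \bZ^n = S_G$. The finiteness of the face lattice of~$P$, together with the boundedness of the~$m_G$ (which depend only on $\sigma$), then produces a single~$k$ that works simultaneously for every face, and the resulting compatible family $(\tau_G)_G$ assembles into the desired map $f \colon \mathcal{O}(k) \to Y$.
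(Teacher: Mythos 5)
Your strategy is essentially the one in the paper: reduce to extending a single element across all faces of~$P$, and absorb the denominators that appear by twisting by~$\mathcal{O}(k)$. (You work from a vertex~$v$ of~$F$; the paper instead dilates~$P$ to $(n+1)P$ and uses relative-interior lattice points, but this difference is cosmetic.) However, there are two problems.

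First, your central monoid claim --- that $k(v_G - v) - m_G v_{G,H} \in S_G$ holds for all sufficiently negative~$k$ --- is asserted via the phrases ``a direction compatible with $v - v_G$'' and ``is absorbed'', which is not an argument. The claim is in fact true, but one has to prove it: the facets of~$T_G$ on which $v - v_G$ lies (those facets of~$T_G$ whose defining linear functional vanishes on $v - v_G$) correspond exactly to the facets of~$P$ containing both~$G$ and~$v$, i.e.\ containing $H = v \vee G$; and since $-v_{G,H} \in T_H$ as well as $v_{G,H} \in T_G$, the same linear functionals vanish on~$v_{G,H}$. Only after this observation do the remaining facet inequalities become strict in~$v - v_G$, so that a large multiple dominates the bounded contribution of~$m_G v_{G,H}$.

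Second, and more seriously, you treat the monoid condition as \emph{sufficient} for the family $(\tau_G)_G$ to be compatible, but it is not. Each $\sigma_G \in Y^G$ is only defined up to the kernel of the localization map $Y^G \to Y^{v \vee G}$, which is in general nonzero; consequently, for a pair of faces $G_1 \subseteq G_2$ with $v \notin G_2$, the elements $\tau_{G_1}|_{G_2}$ and (the appropriate unit times) $\tau_{G_2}$ need not agree in~$Y^{G_2}$ --- they merely agree after further restricting to~$v \vee G_2$. This is precisely the point the paper addresses with its second round of twisting: having produced candidate elements with a power~$N$, it multiplies by a further power~$x_F^M$ so that the differences, being torsion for the relevant localizing elements, are annihilated, and only then does one obtain an honest element of~$\lim Y(M+N)$. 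Your write-up skips this step entirely, so the assembly of the~$\tau_G$ into a map $\mathcal{O}(k) \to Y$ is not justified.
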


\begin{proof}
  The proof is a translation of the corresponding algebro-geometric
  fact \cite[II.5.14~(b)]{Ha-AG} into combinatorial language. To begin
  with, {\it we may assume that each face of~$P$ has a lattice point
    in its relative interior}. Indeed, we can replace~$P$ by its
  dilate $(n+1)P$; note that this does not change the poset~$F(P)_0$,
  nor does it change the cones, monoids and algebras constructed
  from~$P$. Twisting translates easily: the sheaf $\mathcal{O}(1)$
  computed with respect to~$(n+1)P$ is precisely the
  sheaf~$\mathcal{O}(n+1)$ computed with respect to~$P$.

  All algebras and modules constructed from~$P$ are $R$\nbd-submodules
  of the free $R$\nbd-module $R[\bZ^n]$. An element $v_F \in \bZ^n$
  gives rise to an element $x_F = 1 \cdot v_F \in R[\bZ^n]$. We think
  of the $x$\nbd-symbols as multiplicative, that is, we
  write~$x_F/x_G$ for the module element associated to the vector $v_F
  - v_G$.

  Now choose, for each $F \in F(P)_0$, a lattice point $v_F$ in the
  relative interior of~$F$. It is not difficult to see that
  $\mathcal{O}(N)^G = R[(NG+T_G) \cap \bZ^n]$, considered as a free
  $R$\nbd-module, has basis given by $Nv_G + S_G \subseteq \bZ^n$ so
  that
  \[\mathcal{O}(N)^G =  x_G^N A_G\ , \ N \in \bZ \ .\] Moreover, if $p
  \in \mathcal{O}(N)^G$ then $x_F^Mp \in \mathcal{O}(M+N)^G$ for all
  $M \geq 0$. In fact, $x_F^Mp$ is the image of~$x_F^M \tensor p$
  under the isomorphism $\mathcal{O}(M) \tensor \mathcal{O}(N) \iso
  \mathcal{O}(M+N)$ restricted to $G$\nbd-components.

  As notational convention, given an element $x \in Z^G$ for a
  coherent functor~$Z$ we call the image~$x|_H$ of~$x$ under the
  structure map $Z^G \rTo Z^H$ the restriction of~$x$ to~$H$.

  Let us now start with the actual proof. By definition of tangent
  cones $v_F - v_G \in T_G$ for each $G \in F(P)_0$ so that $x_F/x_G
  \in A_G$. One can show the following:
  \begin{assertion}
    \label{ass:algebra_localisation}
    For each $G \in F(P)_0$ we have
    \[T_{F \vee G} = T_G + \bR (v_F-v_G) \qquad \text{and} \qquad S_{F
      \vee G} = S_G + \bZ (v_F-v_G )\] where $F \vee G$ is the join
    of~$F$ and~$G$, that is, the smallest face of~$P$ containing $F
    \cup G$. Consequently, the algebra $A_{F \vee G}$ is obtained
    from~$A_G$ by localising by the single element~$x_F/x_G$ in the
    centre~$Z(A_G)$ of~$A_G$.
  \end{assertion}
  \noindent This implies that for a large enough positive integer $N$
  the element $s_G = (x_F/x_G)^N \cdot s_F|_{F \vee G}$ is in~$Y^{F
    \vee G} \iso A_{F \vee G} \tensor_{A_G} Y^G$, where $s_F$~is the
  element given in the formulation of the Lemma. We may pick an
  integer~$N$ which works for all $G \in F(P)_0$ simultaneously. Then
  for all~$G$, $x_G^N \tensor s_G$ is an element of $Y(N)^G =
  \big(\mathcal{O}(N) \tensor Y \big)^G$ which restricts to $x_F^N
  \tensor s_F|_{F \vee G} \in \big( \mathcal{O}(N) \tensor Y \big)^{F
    \vee G}$. (Note here that $(x_F^N \tensor s_F)|_{F \vee G} = x_F^N
  \tensor (s_F|_{F \vee G})$.)

  Now let $G \subseteq H \in F(P)_0$ be an arbitrary pair of non-empty
  faces of~$P$. We do not know whether the elements $x_G^N \tensor s_G
  |_H$ and~$x_H^N \tensor s_H$ agree, but we know that after
  restricting further to~$F \vee H$ both agree with $x_F^N \tensor
  s_F|_{F \vee H}$. Consequently, using~\ref{ass:algebra_localisation}
  again, for all large integers~$M$ we have equality
  \[\Big( \frac{x_F}{x_H} \Big)^M \cdot  x_G^N \tensor s_G|_H =
  \Big( \frac{x_F}{x_H} \Big)^M \cdot x_H^N \tensor s_H \in Y(N)^H \
  . \] Now multiplication with~$x_H^M$ yields an isomorphism $Y(N)^H
  \iso Y(M+N)^H$ so that the above equality becomes
  \[(x_F^M x_G^N) \tensor s_G|_H = (x_F^M x_H^N) \tensor s_H \in
  Y(M+N)^H \ , \] with both elements restricting to $x_F^{M+N} \tensor
  s_F|_{F \vee H}$ on~$F \vee H$.

  To sum up, we have shown that the family of elements $(x_F^M x_G^N)
  \tensor s_G$, $G \in F(P)_0$, determines an element~$e$ in
  $\lim\limits_\leftarrow Y(M+N)$, and hence an $R$\nbd-module
  homomorphism $R \rTo \lim\limits_\leftarrow Y(M+N)$ which sends $1
  \in R$ to~$e$. But then, by forcing equivariance, there is a map
  $\mathcal{O}(0) \rTo Y(M+N)$ which sends $1 \in A_G$ to
  $(x_F^Mx_G^N) \tensor s_G$. Twisting by $-(M+N)$ yields a map
  $\mathcal{O}(-M-N) \rTo Y$ such that $s_G$~is in the image of the
  $G$\nbd-component. This applies in particular to $G = F$ which is
  the case of the Lemma.
\end{proof}

\begin{corollary}
  \label{cor:generated_global}
  Let $Y$~be a quasi-coherent functor such that for all $F \in F(P)_0$
  the $A_F$\nbd-module~$Y^F$ is finitely generated. Then there are
  finitely many numbers $n_i \in \bZ$ and a map
  \[\bigoplus_i \mathcal{O}(n_i) \rTo Y\] which is surjective on each
  component.
\end{corollary}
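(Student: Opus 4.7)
The strategy is to apply Lemma~\ref{lem:sections_extend} to a finite family of generators of the modules~$Y^F$, one family for each face of~$P$, and then assemble the resulting maps into a single direct sum.

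Since $F(P)_0$ is finite, it suffices, for each $F \in F(P)_0$, to produce finitely many integers $n_{F,1}, \ldots, n_{F,m_F}$ and a map $\bigoplus_{i=1}^{m_F} \mathcal{O}(n_{F,i}) \rTo Y$ whose $F$\nbd-component is surjective; then the direct sum over all $F$ of these maps, indexed by the finite set $\{(F,i) \,|\, F \in F(P)_0,\, 1 \leq i \leq m_F\}$, is a map of the required form whose $G$\nbd-component (for any fixed $G \in F(P)_0$) contains in its image the summand corresponding to $F=G$ and is therefore surjective.

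Fix then $F \in F(P)_0$. By hypothesis the $A_F$\nbd-module $Y^F$ is finitely generated; choose generators $s_{F,1}, \ldots, s_{F,m_F} \in Y^F$. For each index~$i$, Lemma~\ref{lem:sections_extend} supplies an integer $n_{F,i} \in \bZ$ and a map $f_{F,i} \colon \mathcal{O}(n_{F,i}) \rTo Y$ whose $F$\nbd-component has $s_{F,i}$ in its image. Form the direct sum
\[
f_F = \bigoplus_{i=1}^{m_F} f_{F,i} \colon \bigoplus_{i=1}^{m_F}
\mathcal{O}(n_{F,i}) \rTo Y \ .
\]
The $F$\nbd-component of $f_F$ is a map of $A_F$\nbd-modules whose image is an $A_F$\nbd-submodule of $Y^F$ containing $s_{F,1}, \ldots, s_{F,m_F}$, hence equal to $Y^F$.

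No step is genuinely hard: the main point is that Lemma~\ref{lem:sections_extend} already does the nontrivial work of converting local sections into globally defined maps from twisting sheaves, and finiteness of $F(P)_0$ together with the finite generation hypothesis lets us aggregate finitely many such maps into a single surjection at every component.
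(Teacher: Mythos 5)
Your proof is correct and follows essentially the same approach as the paper: for each face $F$ choose finitely many generators of $Y^F$, apply Lemma~\ref{lem:sections_extend} to each generator to obtain a map from some $\mathcal{O}(n_i)$, and take the direct sum over all faces and all generators. Your version makes explicit the observation (left implicit in the paper) that the image of the $F$\nbd-component is an $A_F$\nbd-submodule containing a generating set and is therefore all of~$Y^F$, which is a welcome addition.
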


\begin{proof}
  For $F \in F(P)_0$ choose generators $s^F_1,\, \cdots,\,
  s^F_{\ell(F)}$. By Lemma~\ref{lem:sections_extend} there are maps
  $f^F_i \colon \mathcal{O}(n^F_i) \rTo Y$ such that $s^F_i$~is in the
  image of~$f^F_i$. The required map is given by the sum
  \[\bigoplus_{F \in F(P)_0} \bigoplus_{i=1}^{\ell(F)}
  \mathcal{O}(n^F_i) \rTo Y \ .\]
\end{proof}

\begin{lemma}
  \label{lem:H_is_finite}
  Let $R$~be a left \textsc{noether}ian ring. Let $Y$~be a
  quasi-coherent functor such that for all $F \in F(P)_0$ the
  $A_F$\nbd-module~$Y^F$ is finitely generated. Then $H_k \check
  \Gamma (Y)$ is trivial for $k<0$ and $k>n$, and is a finitely
  generated $R$\nbd-module for all $k \in \bZ$.
\end{lemma}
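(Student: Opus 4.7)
The vanishing of $H_k \check\Gamma(Y)$ for $k \notin [0,n]$ is immediate: since $Y$ is concentrated in chain degree zero and $\dim F$ ranges over $\{0, 1, \dots, n\}$ as $F$ runs through $F(P)_0$, the complex $\check\Gamma(Y)$ lives only in degrees $0 \leq s \leq n$. The substantive content is finite generation of $H_k \check\Gamma(Y)$, which I would establish by resolving $Y$ using finite direct sums of twisting sheaves.

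First I would note that each algebra $A_F = R[S_F]$ is left \textsc{noether}ian: since $S_F$ is a finitely generated commutative monoid by \textsc{Gordan}'s Lemma, $A_F$ is a quotient of a polynomial ring $R[x_1, \dots, x_m]$ with central indeterminates, and the non-commutative Hilbert basis theorem applies. Consequently, submodules of finitely generated $A_F$-modules are themselves finitely generated.

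Using this, I would apply Corollary~\ref{cor:generated_global} iteratively to construct a resolution
\[\cdots \rTo F^2 \rTo F^1 \rTo F^0 \rTo Y \rTo 0\]
in $\qco(P)$, where each $F^i = \bigoplus_j \mathcal{O}(n^{(i)}_j)$ is a finite direct sum of twisting sheaves. Starting from a surjection $F^0 \rTo Y$ supplied by the corollary, set $K^1$ to be the objectwise kernel in $\pre(P)$. Each $K^{1,F}$ is a submodule of the finitely generated $A_F$-module $F^{0,F}$, hence finitely generated. Flatness of the localization $A_F \rTo A_G$ (Lemma~\ref{lem:F_G_loc}) ensures that $A_G \tensor_{A_F} K^{1,F} \rTo K^{1,G}$ is an isomorphism, so $K^1$ is a quasi-coherent functor; iterating the procedure produces the complete resolution.

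Finally I would read off the conclusion from the long exact sequences associated to the short exact sequences
\[0 \rTo K^{i+1} \rTo F^i \rTo K^i \rTo 0 \qquad (K^0 = Y),\]
which, upon applying the exact functor $\check\Gamma$, present $H_{k-i} \check\Gamma(K^i)$ as an extension of a submodule of $H_{k-i-1} \check\Gamma(K^{i+1})$ by a quotient of $H_{k-i} \check\Gamma(F^i)$. By Proposition~\ref{prop:cohomology} each $H_j \check\Gamma(\mathcal{O}(n))$ is a free $R$-module on a finite set of lattice points, so each $H_j \check\Gamma(F^i)$ is a finitely generated $R$-module. Once $i$ exceeds $k$, the term $H_{k-i-1} \check\Gamma(K^{i+1})$ vanishes by the degree bound already established, and a descending induction on $i$ from that range back to $i=0$, together with the fact that subquotients of finitely generated modules over the left \textsc{noether}ian ring $R$ are finitely generated, concludes that $H_k \check\Gamma(Y) = H_k \check\Gamma(K^0)$ is finitely generated. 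The main subtlety lies in preserving both quasi-coherence and finite generation under passage to kernels, both of which follow from flatness of the structure maps together with the \textsc{noether}ian property of each $A_F$.
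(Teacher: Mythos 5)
Your proof is correct, and at its core it uses the same ingredients as the paper: a surjection from a finite direct sum of twists (Corollary~\ref{cor:generated_global}), the computation of $H_*\check\Gamma(\mathcal{O}(k))$ (Proposition~\ref{prop:cohomology}), passage to a quasi-coherent kernel with finitely generated components, and \textsc{Noether}ianness of~$R$ to close the induction. The bookkeeping is arranged differently: the paper constructs a \emph{single} syzygy $0 \to K \to Z \to Y \to 0$ and runs an increasing induction on the homological degree~$k$ through the three-term exact sequence $H_{k+1}\check\Gamma Z \to H_{k+1}\check\Gamma Y \to H_k\check\Gamma K$, with the inductive hypothesis tacitly ranging over \emph{all} quasi-coherent functors with finitely generated components (so it applies to~$K$ as well as to~$Y$). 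You instead unwind that implicit second induction by building the entire resolution $\cdots \to F^1 \to F^0 \to Y$ and doing a descending induction on the resolution degree~$i$ for a fixed~$k$; this is the same dimension-shifting argument phrased as one explicit induction rather than a nested one. Your write-up also makes explicit two points the paper leaves tacit and that are genuinely needed: that each $A_F$ is left \textsc{Noether}ian (by the Hilbert basis theorem with central indeterminates), which is what guarantees the syzygy modules $K^{i,F}$ are finitely generated so that Corollary~\ref{cor:generated_global} can be re-applied; and that the kernel is again quasi-coherent because each localisation $A_F \to A_G$ of Lemma~\ref{lem:F_G_loc} is flat, so taking kernels commutes with $A_G \tensor_{A_F} -$.
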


\begin{proof}
  This follows the pattern of \cite[III.5.2]{Ha-AG}. Triviality of
  $H_k \check \Gamma(Y)$ for $k<0$ and $k>n$ is immediate as $\check
  \Gamma(Y)$ is concentrated in degrees~$0$ to~$n$, by construction.

  The Lemma is true for a finite sum of quasi-coherent functors of the
  form~$\mathcal{O}(k)$, by the calculation in
  Proposition~\ref{prop:cohomology}.  By
  Corollary~\ref{cor:generated_global} we can find a surjection $Z
  \rTo Y$ with $Z$ a finite sum of $\mathcal{O}(k)$s. Let $K$~denote
  the kernel; this is a quasi-coherent functor as well. By
  construction we obtain a short exact sequence of chain complexes
  \[0 \rTo \check \Gamma K \rTo \check \Gamma Z \rTo \check \Gamma Y
  \rTo 0\ .\] Now use increasing induction on~$k$ on the corresponding
  exact sequence snippet
  \[H_{k+1} \check \Gamma Z \rTo H_{k+1} \check \Gamma Y \rTo H_k \check
  \Gamma K \ ,\] starting with $k = -1$; by choice of~$Z$ the module
  on the left is finitely generated, and by what has been established
  by induction the module on the right is finitely generated as
  well. Since $R$~is left \textsc{noether}ian it follows that the
  middle module is finitely generated.
\end{proof}

\begin{proposition}
  \label{prop:qc_has_fg_cohomology}
  Suppose $R$ is left \textsc{noether}ian. Let $Y \in \hco(P)$ be such
  that $Y^F_k$ is finitely generated as an $A^F$\nbd-module for all $F
  \in F(P)_0$ and all $k \in \bZ$. Then $\check \Gamma (Y)$ is a
  (possibly unbounded) chain complex with finitely generated homology
  modules.
\end{proposition}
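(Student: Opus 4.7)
The plan is to reduce to Lemma~\ref{lem:H_is_finite} by replacing $Y$ with its degreewise homology and then running the spectral sequence of Remark~\ref{rem:other_ss}.

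First I would establish that each algebra $A_F$ is left \textsc{noether}ian. By \textsc{Gordan}'s Lemma, $A_F$ is generated as an $R$-algebra by the finitely many monoid generators of $S_F$, all of which lie in its centre; consequently $A_F$ is a quotient of a polynomial ring $R[x_1,\ldots,x_m]$ in variables commuting with one another and with $R$. The Hilbert basis theorem in this setting (its standard proof never uses commutativity of the coefficient ring, only centrality of the polynomial variable) makes $R[x_1,\ldots,x_m]$, and therefore $A_F$, left \textsc{noether}ian. In particular, for every $F$ and every $q$, the module $H_q(Y^F)$ is a subquotient of the finitely generated $A_F$-module $Y^F_q$ and is therefore itself finitely generated over $A_F$.

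Next I would promote $F \mapsto H_q(Y^F)$ to a quasi-coherent functor. For each pair $F \subseteq G$ of non-empty faces, the adjoint structure map $A_G \tensor_{A_F} Y^F \rTo Y^G$ is a quasi-isomorphism by the homotopy-sheaf hypothesis. By Lemma~\ref{lem:F_G_loc}, $A_G$ is a central localisation of $A_F$, so the functor $A_G \tensor_{A_F} -$ is exact and commutes with the formation of homology; this yields an isomorphism $A_G \tensor_{A_F} H_q(Y^F) \iso H_q(Y^G)$. Thus $H_q(Y)$ is quasi-coherent with finitely generated components, and Lemma~\ref{lem:H_is_finite} applies. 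Combining its conclusion with Remark~\ref{rem:on_check_Gamma}~(\ref{item:limits}) shows that $\lim{}^{n-p} H_q(Y)$ is a finitely generated $R$-module for every $p,q$ and vanishes outside the range $0 \leq p \leq n$.

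Finally I would plug this into the spectral sequence of Remark~\ref{rem:other_ss}, whose $E^2$-page is $E^2_{p,q}(Y) = \lim{}^{n-p} H_q(Y)$. Since the double complex $D_{*,*}(Y)$ is concentrated in the columns $0 \leq p \leq n$, the column filtration is finite, so the spectral sequence converges and terminates at a finite page in each bidegree. In each total degree $m$ the abutment $H_m \check \Gamma(Y)$ thus carries a filtration of length at most $n+1$ whose successive quotients are subquotients of the finitely generated $R$-modules $E^2_{p,m-p}$; extensions of finitely generated modules being finitely generated, this gives the claim. The main obstacle I anticipate is exactly the promotion of $H_q(Y)$ from a homotopy-sheaf-like functor to a genuine quasi-coherent functor, because Lemma~\ref{lem:H_is_finite} demands strict quasi-coherence; it is the central-localisation property of Lemma~\ref{lem:F_G_loc}, and the resulting exactness of base change, that does this work.
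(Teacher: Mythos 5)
Your proposal is correct and follows essentially the same route as the paper: pass to the degreewise homology $H_q(Y)$, observe it is a quasi-coherent functor with finitely generated components, apply Lemma~\ref{lem:H_is_finite} to its \textsc{\v Cech} cohomology, and feed the result into the column-filtration spectral sequence of Remark~\ref{rem:other_ss}. The only difference is that you spell out explicitly (via \textsc{Gordan}'s Lemma plus the central-variable \textsc{Hilbert} basis theorem) why each $A_F$ is left \textsc{noether}ian, a point the paper leaves implicit when asserting that $H_q(Y^F)$ is finitely generated; this is a genuine gap-fill but not a change of strategy.
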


\begin{proof}
  Since $Y$~is a homotopy sheaf $H_q(Y) \colon F \mapsto H_q(Y^F)$ is
  a quasi-coherent functor (this uses Lemma~\ref{lem:F_G_loc} and the
  fact that taking homology is compatible with localisation). Since
  $R$~is left \textsc{noether}ian, and since all the modules $Y^F_k$
  are finitely generated, the modules $H_q(Y^F)$~are finitely
  generated as well. It follows from Lemma~\ref{lem:H_is_finite},
  applied to~$H_q(Y)$, and from
  Remark~\ref{rem:on_check_Gamma}~(\ref{item:limits}) that all the
  entries of the $E^2$\nbd-term of the spectral
  sequence~\ref{rem:other_ss} are finitely generated
  $R$\nbd-modules. Since this spectral sequence is concentrated in
  columns~$0$ to~$n$, its abutment $H_{p+q} \check \Gamma Y$ consists
  of finitely generated $R$\nbd-modules as well (making use the fact
  that $R$~is left \textsc{noether}ian once again).
\end{proof}

\subsection{Finiteness of the \textsc{\v Cech\/} complex}

We are now going to prove the following fundamental finiteness result:

\begin{theorem}
  \label{thm:Cech_finite}
  Let $R$~be a unital ring. Suppose that $R$~is commutative, or else
  left \textsc{noether}ian. Let $Y \in \perf(P)$. Then $\check \Gamma
  (Y)$~is a perfect complex of $R$\nbd-modules.
\end{theorem}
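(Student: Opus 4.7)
The plan is to handle the two hypotheses separately: the left \textsc{noether}ian case by direct homological arguments, and the general commutative case by \textsc{noether}ian approximation, after a common reduction to homotopy vector bundles.

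For the reduction, Lemma~\ref{lem:perf_qis_hvect} replaces $Y$ by a homotopy vector bundle $Z \in \hvect(P)$ with a quasi-isomorphism $Z \rTo Y$, which by Proposition~\ref{prop:cech_invariant} induces a quasi-isomorphism $\check\Gamma(Z) \rTo \check\Gamma(Y)$, so it suffices to show that $\check\Gamma(Z)$ is perfect. Since $F(P)_0$ is finite and each $Z^F$ is a bounded complex of finitely generated projective $A_F$-modules, uniform bounds exist so that $\check\Gamma(Z)$ is a bounded chain complex of $R$-modules. Furthermore $A_F = R[S_F]$ is free as an $R$-module, so every finitely generated projective $A_F$-module is a direct summand of a free $R$-module and thus flat over $R$; hence $\check\Gamma(Z)$ is a bounded complex of flat $R$-modules.

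When $R$ is left \textsc{noether}ian, Proposition~\ref{prop:qc_has_fg_cohomology} applied to $Z$ (whose components $Z^F_k$ are finitely generated $A_F$-modules) gives that all homology modules of $\check\Gamma(Z)$ are finitely generated over $R$. A bounded chain complex of flat modules automatically has finite \textsc{Tor}-amplitude, and for such a complex over a \textsc{noether}ian ring with finitely generated homology the classical resolution-and-truncation argument---lift to a bounded-above complex of finitely generated projectives and then truncate at the bottom using the \textsc{Tor}-amplitude bound---produces a strict perfect complex quasi-isomorphic to $\check\Gamma(Z)$.

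When $R$ is commutative but possibly non-\textsc{noether}ian, I would write $R = \mathop{\mathrm{colim}}_\alpha R_\alpha$ as the filtered colimit of its finitely generated $\bZ$-subalgebras, each \textsc{noether}ian by \textsc{Hilbert}'s basis theorem. The homotopy vector bundle $Z$ is described by a finite amount of data over $R$: for each face $F$ a bounded collection of idempotent and boundary matrices with entries in $A_F = R[S_F]$, and for each inclusion $F \subseteq G$ a matrix with entries in $A_G$ realising the structure map. Only finitely many elements of $R$ appear in these matrices, so there is some $\alpha$ for which $Z$ descends to a homotopy vector bundle $Z_\alpha$ over $R_\alpha$ with $Z \iso Z_\alpha \tensor_{R_\alpha} R$ componentwise. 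Since $\check\Gamma$ is assembled from finite direct sums one obtains
\[\check\Gamma_P(Z) \iso \check\Gamma_P(Z_\alpha) \tensor_{R_\alpha} R \ ;\]
applying the \textsc{noether}ian case to $Z_\alpha$ and tensoring a representing strict perfect $R_\alpha$-complex with $R$ yields a strict perfect $R$-complex quasi-isomorphic to $\check\Gamma(Z)$. The main obstacle I anticipate is exactly this descent step: verifying carefully that the entire combinatorial datum defining a homotopy vector bundle really descends coherently to a finitely generated subring of $R$, and that base change commutes with the \textsc{\v Cech} construction. This is the combinatorial analogue of the \textsc{Thomason}--\textsc{Trobaugh} \textsc{noether}ian approximation arguments of \cite[\S2.5, App.~C]{TT-K}.
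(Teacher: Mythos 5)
Your \textsc{noether}ian case matches the paper's: reduce via Lemma~\ref{lem:perf_qis_hvect} to a homotopy vector bundle $Z$, observe that $\check\Gamma(Z)$ is a bounded complex with finitely generated homology (Proposition~\ref{prop:qc_has_fg_cohomology}), and conclude perfection. Note only that a direct summand of a free $R$\nbd-module is \emph{projective}, not merely flat; the paper exploits this and cites \cite[1.7.13]{Rosen-K} directly rather than running the Tor-amplitude truncation argument, but your route is correct too.

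The commutative case is where you genuinely diverge, and where there is a gap you flag but do not close. You descend $Z \in \hvect(P)$ directly to a finitely generated $\bZ$\nbd-subalgebra $R_\alpha$; but being a homotopy vector bundle includes the condition that the adjoint structure maps $A_G \tensor_{A_F} Z^F \to Z^G$ are \emph{quasi-isomorphisms}, and none of the matrices in your list of "finite data" records this. Quasi-isomorphism is not preserved under an arbitrary base change from $R$ to a subring, so the diagram $Z_\alpha$ you construct is a priori only a presheaf in $\pre(P)_0$, not a homotopy sheaf, and Proposition~\ref{prop:qc_has_fg_cohomology} would not apply to it. The gap is repairable --- each mapping cone of an adjoint structure map is a bounded acyclic complex of finitely generated projective $A_G$\nbd-modules, hence contractible, and a chain contraction is finite data that descends to some $R_\alpha$, forcing acyclicity of the descended mapping cone --- but your proposal does not say this, and it is precisely the delicate point. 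The paper sidesteps it entirely: it uses ampleness of the line bundle on $X_P$ to replace $Y$ up to quasi-isomorphism by a bounded complex in $\hvect(P) \cap \qco(P)$ via \cite[2.3.1(d)]{TT-K}, where the adjoint structure maps are genuine \emph{isomorphisms} that descend trivially via their matrices, and then invokes \textsc{Thomason}--\textsc{Trobaugh}'s \cite[Appendix~C]{TT-K} rather than rebuilding the approximation by hand. Your route stays purely combinatorial and avoids the ample-line-bundle input, which is attractive, but as written it is incomplete at exactly the point you single out as the "main obstacle."
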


\begin{proof}
  Suppose first that $R$~is left \textsc{noether}ian. Since $Y$~is a
  perfect complex, there exists a homotopy vector bundle $V \in
  \hvect(P)$ together with a quasi-isomorphism $V \rTo Y$ by
  Lemma~\ref{lem:perf_qis_hvect}. Then the induced map $\check \Gamma
  V \rTo \check \Gamma Y$ is a quasi-isomorphism by
  Proposition~\ref{prop:cech_invariant}. Since $V$~is bounded so
  is~$\check \Gamma V$. Since $V^F$ consists of projective
  $A_F$\nbd-modules and since $A_F$~is free as an $R$\nbd-module,
  $\check \Gamma V$ consists of projective $R$\nbd-modules, and has
  finitely generated homology modules by
  Proposition~\ref{prop:qc_has_fg_cohomology}. But this means that
  $\check \Gamma V$ is chain homotopy equivalent to a strict perfect
  complex of $R$\nbd-modules \cite[1.7.13]{Rosen-K}.

  Now suppose that $R$~is commutative, but not
  \textsc{noether}ian. Then $Y$ can be replaced, up to
  quasi-isomorphism, by a bounded complex~$V$ in $\hvect(P) \cap
  \qco(P)$; this is true since the toric scheme~$X_P$ is projective
  over $\mathrm{Spec}\, R$ and thus has an ample line bundle, so we
  can appeal to~\cite[2.3.1~(d)]{TT-K}. Then $\check \Gamma (Y) \simeq
  \check \Gamma (V)$, and it is enough to prove the Theorem for~$V$
  only.  

  The complex~$V$ descends to a \textsc{noether}ian
  subring~$R_0$. More precisely, write $\qco(P)_0$ for the category
  $\qco(P)$ defined over a subring~$R_0$ instead of~$R$, and similarly
  for $\hvect(P)_0$. Then by \textsc{noether}ian approximation
  \cite[Appendix~C]{TT-K}, there is a \textsc{noether}ian
  subring~$R_0$ of~$R$, and a bounded complex $V_0 \in \hvect(P)_0
  \cap \qco(P)_0$ such that $V = R \tensor_{R_0} V_0$. By the first
  part of the proof there is a strict perfect complex~$B_0$ of
  $R_0$\nbd-modules which is chain homotopy equivalent to~$\check
  \Gamma V_0$. But then $\check \Gamma V \iso R \tensor_{R_0} \check
  \Gamma V_0$ is homotopy equivalent to $R \tensor_{R_0} B_0$, and the
  latter is strict perfect.
\end{proof}

\subsection{Canonical sheaves and suspension of chain complexes}

\begin{definition}
  \label{def:can_const_susp}
  \begin{enumerate}
  \item For $k \in \bZ$ and $C \in \Ch(R)$ we define $\mathcal{O}(k)
    \tensor C$ to be the sheaf given by
    \[\big(\mathcal{O}(k) \tensor C\big)^F = \mathcal{O}(k)^F \tensor_R
    C \quad \text{for } F \in F(P)_0\] with structure maps induced by
    those of~$\mathcal{O}(k)$. We call $\mathcal{O}(k) \tensor C$ the
    $k$th~{\it canonical sheaf associated to~$C$}.
  \item Let $C$ be an $R$\nbd-module chain complex.  We denote by
    $\con(C)$ the constant $F(P)_0$-diagram with value~$C$ and
    identity structure maps.
  \item \label{item:susp} The $n$th {\it suspension\/} $C[n]$ of the
    chain complex $C \in \Ch(R)$ is defined by $C[n]_k = C_{k-n}$, and
    multiplying the differentials with the sign~$(-1)^n$.
  \end{enumerate}
\end{definition}

\begin{lemma}
  \label{lem:Gamma_of_psi_k}
  Let $C \in \Ch(R)$. If $k<0$ is an integer such that $E_P(k)=0$
  (\ie, such that $kP$ has no lattice points in its interior), then
  $\check \Gamma \big( \mathcal{O}(k) \tensor C \big)$ is acyclic.
\end{lemma}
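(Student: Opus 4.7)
The plan is to reduce to Proposition~\ref{prop:cohomology} by exploiting the tensor decomposition. By Definitions~\ref{def:cech_complex} and~\ref{def:can_const_susp}, $\check\Gamma\bigl(\mathcal{O}(k)\otimes C\bigr)$ is the total complex of the double complex
\[D_{s,t} = \bigoplus_{\dim F = n-s} \mathcal{O}(k)^F \otimes_R C_t \ .\]
Since $-\otimes_R C_t$ commutes with direct sums, the row $D_{*,t}$ equipped with its horizontal differential is canonically isomorphic, as a chain complex in~$s$, to $\check\Gamma(\mathcal{O}(k)) \otimes_R C_t$.

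By Proposition~\ref{prop:cohomology}, the hypothesis $k<0$ with $E_P(k)=0$ says that $kP$ contains no interior lattice points, so $H_j\check\Gamma(\mathcal{O}(k))=0$ for every~$j$. Moreover, $\check\Gamma(\mathcal{O}(k))$ is supported in chain degrees $0$ through~$n$ and consists of free $R$\nbd-modules, since each $\mathcal{O}(k)^F = R[(kF+T_F)\cap\bZ^n]$ is free over~$R$. By a standard fact of homological algebra, a bounded acyclic complex of projective modules admits a contracting homotopy~$h$; tensoring $h$ with $\id_{C_t}$ yields a contracting homotopy of $\check\Gamma(\mathcal{O}(k)) \otimes_R C_t$. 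Hence every row of~$D$ has vanishing horizontal homology.

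Since $D$ is concentrated in the first $n+1$ columns, Proposition~\ref{prop:ss_argument}(2), applied to the zero map from~$D$ to the zero double complex, shows that the induced map on total complexes is a quasi-isomorphism. Therefore $\check\Gamma\bigl(\mathcal{O}(k)\otimes C\bigr)$ is acyclic. I do not foresee a genuine obstacle: the care needed lies only in matching up the definitions to identify the rows of~$D$ correctly and in verifying the column-boundedness that allows Proposition~\ref{prop:ss_argument} to be invoked.
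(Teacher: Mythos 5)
Your proposal is correct, and it runs along the same lines as the paper's proof: both form the double complex $D_{s,t}=\bigoplus_{\dim F = n-s}\mathcal{O}(k)^F\otimes_R C_t$, identify the horizontal row $D_{*,t}$ with $\check\Gamma(\mathcal{O}(k))\otimes_R C_t$, observe that $\check\Gamma\mathcal{O}(k)$ is acyclic by Proposition~\ref{prop:cohomology}, and conclude via the spectral sequence of Proposition~\ref{prop:ss_argument}. The only genuine point of divergence is how you get from acyclicity of $\check\Gamma\mathcal{O}(k)$ to acyclicity of $\check\Gamma\mathcal{O}(k)\otimes_R C_t$: the paper invokes the K\"unneth formula of \cite[3.6.1]{Weibel-intro}, which requires noting that the images of the \v{C}ech differentials are free $R$\nbd-modules (because the differentials are inclusions of bases), whereas you instead use that a bounded acyclic complex of projective modules is contractible and that a contracting homotopy survives tensoring with $\id_{C_t}$. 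Your route is marginally more elementary, since it only needs projectivity of the chain modules themselves and sidesteps the flatness hypothesis on the images; the paper's K\"unneth route records slightly more structural information (the $\mathrm{Tor}$-term), but that extra information is not used here. Both are complete and correct proofs.
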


\begin{proof}
  Let $D_{*,*} = D_{*,*} \big( \mathcal{O}(k) \tensor C\big)$ be the
  double chain complex associated to~$ \mathcal{O}(k) \tensor C$,
  cf.~(\ref{eq:associated_double_complex}). The $A_F$\nbd-module
  $\mathcal{O}(k)^F \iso A_F$ is a free $R$\nbd-module for each $F \in
  F(P)_0$, and the \textsc{\v Cech\/} complex $\check \Gamma
  \mathcal{O}(k)$ has the property that the image of each chain module
  under the differential is a free $R$\nbd-module (since all maps are
  given by inclusion of bases). Hence by the \textsc{K\"unneth\/}
  formula \cite[3.6.1]{Weibel-intro} the homology of the horizontal
  chain complex $D_{*,p}$ fits into a short exact sequence
  \[H_k \big( \check \Gamma \mathcal{O}(k) \big) \tensor_R C_p \rTo
  H_k \big( \check \Gamma \mathcal{O}(k) \tensor_R C_p \big) \rTo
  \mathrm{Tor}^R_1 \big( H_{n-1} \check \Gamma \mathcal{O}(k),\,
  C_p\big) \ .\] By hypothesis $E_P(k)=0$ so that $\check \Gamma
  \mathcal{O}(k)$ is acyclic by Proposition~\ref{prop:cohomology}. So
  first and third term of the short exact sequence are trivial, hence
  so is the middle term.

  But this means that the spectral sequence~(\ref{eq:ss_2})
  associated to~$D_{*,*}$ has trivial $E^1$\nbd-term, hence its
  abutment $H_* \check \Gamma \big(  \mathcal{O}(k) \tensor C \big)$
  is trivial too. This proves the Lemma.
\end{proof}

\begin{lemma}
  \label{lem:susp_limit}
  For every chain complex $C \in \Ch(R)$ we have a canonical
  quasi-isomorphism $C[n] \rTo^\simeq \check \Gamma \con(C)$.
\end{lemma}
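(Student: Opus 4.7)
The plan is to produce a map from a very thin double complex, whose total complex is literally $C[n]$, into the double complex $D_{*,*} = D_{*,*}(\con(C))$, and then to apply the spectral sequence argument of Proposition~\ref{prop:ss_argument}~(2). Concretely, define an auxiliary double complex $D'_{*,*}$ by $D'_{n,t} = C_t$ and $D'_{s,t} = 0$ for $s \ne n$, with zero horizontal differentials and vertical differential $(-1)^n d_C$ in column~$n$. By inspection $\mathrm{Tot}(D')_m = D'_{n,m-n} = C_{m-n} = C[n]_m$, and the total differential reduces to $(-1)^n d_C$, matching the suspension sign convention of Definition~\ref{def:can_const_susp}~(\ref{item:susp}); thus $\mathrm{Tot}(D') = C[n]$ on the nose.

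Next, define a map of double complexes $\varphi \colon D'_{*,*} \rTo D_{*,*}$ by letting $\varphi_{n,t} \colon C_t \rTo \bigoplus_{\dim F = 0} C_t$ be the diagonal inclusion and $\varphi_{s,t} = 0$ otherwise. Compatibility with the vertical differentials is automatic, since both sides carry $(-1)^n d_C$ componentwise in column~$n$. Compatibility with the horizontal differential reduces to the claim that the diagonal lies in $\ker(\partial^h \colon D_{n,t} \rTo D_{n-1,t})$. But the horizontal slice $D_{*,t}$ is by definition the \textsc{\v Cech\/} complex $\check\Gamma\con(C_t)$ of the constant diagram with value~$C_t$ concentrated in degree zero, and Remark~\ref{rem:on_check_Gamma}~(2), read in combination with the limit interpretation of Remark~\ref{rem:on_check_Gamma}~(\ref{item:limits}), gives $H^h_n(D_{*,t}) \iso \lim \con(C_t) = C_t$ with the isomorphism realised precisely by the diagonal embedding. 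In particular the diagonal is in the kernel, so $\varphi$ is a genuine map of double complexes.

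Both $D_{*,*}$ and $D'_{*,*}$ are concentrated in columns $0$ through~$n$, so the hypotheses of Proposition~\ref{prop:ss_argument}~(2) are met. By the calculation just recalled, the induced map on $E^1 = H^h_*(D_{*,*})$ is an isomorphism: on both sides the horizontal homology is concentrated in row $s=n$ with value $C_t$ in column~$t$, and $\varphi$ induces the identity there. Proposition~\ref{prop:ss_argument}~(2) therefore yields a quasi-isomorphism $\mathrm{Tot}(\varphi) \colon C[n] \rTo^\simeq \check\Gamma\con(C)$. Canonicity is built in, since the construction of~$\varphi$ is manifestly functorial in~$C$.

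I do not foresee a serious obstacle; the only delicate point is sign bookkeeping, which is forced by the conventions fixed in Definitions~\ref{def:cech_complex} and~\ref{def:can_const_susp}, together with the verification that the diagonal realises the iso $C_t \iso H^h_n(D_{*,t})$, which is essentially already recorded in Remark~\ref{rem:on_check_Gamma}.
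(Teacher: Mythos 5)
Your proposal is correct and follows essentially the same route as the paper: you realise $C[n]$ as the total complex of a double complex supported in column~$n$, map it diagonally into $D_{*,*}(\con(C))$, and invoke Proposition~\ref{prop:ss_argument}~(2); the paper justifies the horizontal quasi-isomorphism by identifying $D_{*,t}$ with the dual of the cellular chain complex of~$P$ and the map with the dual of the augmentation, while you obtain the same computation from the limit interpretation in Remark~\ref{rem:on_check_Gamma}, but these are two phrasings of the same fact. (You also correctly write $\bigoplus_{\dim F = 0}$ for $D_{n,t}$, whereas the paper's proof has a slip and writes $\bigoplus_{\dim F = n}$.)
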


\begin{proof}
  Let $A_{*,*}$ denote the complex~$C$ considered as a double chain
  complex concentrated in column~$n$. That is, we're looking at the
  double chain complex with $A_{n,k} = C_k$, $A_{j,k}=0$ for $j \ne
  n$, and vertical differential the differential of~$C$ multiplied
  with the sign~$(-1)^n$. Then the total complex of~$A_{*,*}$ is
  precisely~$C[n]$.

  The double chain complex $A_{*,*}$ maps into the double chain
  complex $D_{*,*} = D_{*,*} \big( \con(C) \big)$ associated
  to~$\con(C)$ by the diagonal map~$\Delta$ given by
  \[C_t \rTo \bigoplus_{\dim F = n} C_t = D_{n,t}\ , \ t \in \bZ \ .\]
  This defines indeed a map of double chain complexes by the
  properties of incidence numbers; more precisely, the horizontal
  chain complex $D_{*,t}$ is the tensor product of~$C_t$ with the dual
  of the chain complex computing the integral cellular homology
  of~$P$, and the map of horizontal chain complexes $A_{*, t} \rTo
  D_{*,t}$ is the tensor product of~$C_t$ with the dual of the
  augmentation map. In particular, the map is a quasi-isomorphism with
  respect to ``horizontal'' homology. By the spectral sequence
  argument~\ref{prop:ss_argument}~(2), $\Delta$~induces a
  quasi-isomorphism
  \[C[n] = \mathrm{Tot}\, A_{*,*} \rTo[l>=3em]^{\mathrm{Tot}\, \Delta}
  \mathrm{Tot}\, D_{*,*} = \check \Gamma \big(\con(C) \big) \
  . \qedhere\]
\end{proof}

\begin{lemma}
  \label{lema:con_psi0}
  For every chain complex $C \in \Ch(R)$ we have a canonical
  map $\con(C) \rTo \mathcal{O}(0) \tensor C$, induced by the
  inclusions of $F$-components
  \[\con(C)^F = C \iso R[\{0\}] \tensor_R C \rTo R[\bZ^n \cap C_F]
  \tensor_R C = \big(\mathcal{O}(0) \tensor C\big)^F \ ,\] which in
  turn induces a quasi-isomorphism $\check \Gamma \con(C) \rTo^\simeq
  \check \Gamma \big(\mathcal{O}(0) \tensor C\big)$.
\end{lemma}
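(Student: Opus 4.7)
The plan is to apply Proposition~\ref{prop:ss_argument}~(2) to the map of associated double chain complexes induced by the map of sheaves $\con(C) \rTo \mathcal{O}(0) \tensor C$. Both double complexes are concentrated in the first $n+1$ columns, so it suffices to verify that the induced map on horizontal homology is an isomorphism at each row~$t$.

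Since each $\mathcal{O}(0)^F = A_F$ is a free $R$-module and the horizontal differentials are $R$-linear, the tensor factor~$C_t$ can be pulled out of the \textsc{\v Cech} construction at row~$t$. There are natural identifications of horizontal chain complexes
\begin{equation*}
  D_{*,t}\bigl(\con(C)\bigr) \iso \check\Gamma\con(R) \tensor_R C_t
  \qquad \text{and} \qquad
  D_{*,t}\bigl(\mathcal{O}(0) \tensor C\bigr) \iso \check\Gamma\mathcal{O}(0) \tensor_R C_t \ ,
\end{equation*}
intertwining the map in question with $\check\Gamma\con(R) \rTo \check\Gamma\mathcal{O}(0)$ tensored with $\id_{C_t}$. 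Both $\check\Gamma\con(R)$ and $\check\Gamma\mathcal{O}(0)$ consist of free $R$-modules; hence any quasi-isomorphism between them will remain a quasi-isomorphism after tensoring with~$C_t$ (the mapping cone would be an acyclic complex of free $R$-modules, and remains acyclic after tensoring). Thus the task reduces to showing $\check\Gamma\con(R) \rTo \check\Gamma\mathcal{O}(0)$ is a quasi-isomorphism.

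By Remark~\ref{rem:on_check_Gamma}~(2) and Proposition~\ref{prop:cohomology}, both of these complexes have homology concentrated in degree~$n$, equal to~$R$. Under the identification $H_n\check\Gamma(\,\hbox{-}\,) \iso \lim$ of Remark~\ref{rem:on_check_Gamma}~(\ref{item:limits}), the induced map on~$H_n$ becomes the natural $R$-module map $\lim\con(R) = R \rTo \lim\mathcal{O}(0)$ sending $r$ to the family $(r \cdot 1_F)_F$. Each structure map $A_F \rTo A_G$ is an injective localisation (multiplication by the localising basis element of~$A_F$ merely translates $S_F$ inside~$\bZ^n$, hence acts injectively on the free $R$-module~$A_F$), so the algebras $A_F$ all embed into $A_P = R[\bZ^n]$ and compatible families lie in $\bigcap_F A_F = R\bigl[\bigcap_F S_F\bigr]$. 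For any nonzero $d \in \bR^n$, the vertex~$v$ maximising $\langle d,\,\cdot\,\rangle$ over~$P$ satisfies $d \notin T_v$, so $\bigcap_F T_F \subseteq \bigcap_v T_v = \{0\}$; consequently $\lim\mathcal{O}(0) = R[\{0\}] = R$, and the induced map $R \rTo R$ is visibly the identity. The main obstacle is this last computation of $\lim\mathcal{O}(0)$: one needs both the explicit injectivity of the localisation structure maps and the geometric observation that the recession cone of a bounded polytope is trivial.
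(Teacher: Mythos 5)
Your proof is correct, but follows a genuinely different route from the paper's. The paper works with the $\bZ^n$-grading on $\mathcal{O}(0)^F = R[S_F]$ directly: it observes that the degree-$0$ slice of $D_{*,t}\bigl(\mathcal{O}(0)\tensor C\bigr)$ coincides with the double complex for $\con(R)\tensor C$ and is the dual cellular chain complex of~$P$ tensored with~$C_t$, while all non-zero homogeneous degrees give acyclic horizontal complexes (citing \cite[2.5.3]{H-nonlin} for both facts); the inclusion of the degree-$0$ slice then furnishes the horizontal quasi-isomorphism feeding into Proposition~\ref{prop:ss_argument}~(2). You instead factor out $C_t$ via a K\"unneth-type reduction (valid because the \textsc{\v Cech} complexes involved are bounded complexes of free $R$-modules, so an acyclic mapping cone is contractible and stays acyclic after $\tensor_R C_t$ --- you might state the boundedness explicitly since it is what makes ``acyclic free'' imply ``contractible''), reduce to the single comparison $\check\Gamma\con(R)\rTo\check\Gamma\mathcal{O}(0)$, identify both homologies as $R$ in degree~$n$ via Remark~\ref{rem:on_check_Gamma} and Proposition~\ref{prop:cohomology}, and then verify the map on~$H_n$ is the identity by computing $\lim\mathcal{O}(0)=R[\bigcap_F S_F]=R$ directly from the geometry (the recession cone of a polytope is trivial). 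This is a nice self-contained alternative; note, though, that it is not independent of the same input, since Proposition~\ref{prop:cohomology} itself cites the same reference the paper uses in its own proof of this lemma.
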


\begin{proof}
  First recall that $\mathcal{O}(0)^F = A_F = R[S_F]$ for each $F \in
  F(P)_0$; this means that we can consider $\mathcal{O}(0)^F$ as a
  $\bZ^n$\nbd-graded $R$\nbd-module with homogeneous components~$0$
  or~$R$. The double chain complex $D_{*,*} \big( \mathcal{O}(0)
  \tensor C \big)$ associated to $\mathcal{O}(0) \tensor C$ is a
  double chain complex of $\bZ^n$\nbd-graded $R$\nbd-modules with
  horizontal and vertical differentials respecting the grading. We can
  thus concentrate on homogeneous components one at a time.  As
  explained in \cite[2.5.3]{H-nonlin}, the component of degree $0 \in
  \bZ^n$ of the horizontal chain complex $D_{*,t} \big( \mathcal{O}(0)
  \tensor C \big)$ is the tensor product of~$C_t$ with the dual of the
  chain complex calculating the integral cellular homology of~$P$ so
  that its horizontal homology is concentrated in column~$n$ and has
  value~$C_t$. For all homogeneous degrees different from~$0$ the
  chain complex is acyclic ({\it loc.\,cit.}).

  On the other hand, we have a canonical isomorphism of diagrams
  \[\con (C) \iso \con(R) \tensor C\] where $\con(R)$ is the
  constant diagram with value~$R$, considered as a diagram of chain
  complexes concentrated in chain degree~$0$. We can think of~$R$ as a
  $\bZ^n$\nbd-graded module concentrated in degree $0 \in \bZ^n$. Thus
  the associated double chain complex $D_{*,*} \big( \con(R) \tensor C
  \big)$ consists of $\bZ^n$\nbd-graded modules with differentials
  preserving the grading. In homogeneous degree~$0$, the horizontal
  chain complexes $D_{*,t} \big( \con(R) \tensor C \big)$ and $D_{*,t}
  \big( \mathcal{O}(0) \tensor C \big)$ agree, in non-zero degrees the
  horizontal chain complex $D_{*,t} \big( \con(R) \tensor C \big)$ is
  the zero-complex.

  We have an obvious map of double complexes
  \[\omega \colon D_{*,*} \big( \con(R) \tensor C \big) \rTo D_{*,*} \big(
  \mathcal{O}(0) \tensor C \big) \ ,\] the inclusion of degree~$0$
  components, which by the previous two paragraphs induces an
  isomorphism on horizontal homology modules. By the spectral sequence
  argument~\ref{prop:ss_argument}~(2) the composite
  \[
  \begin{split}
    \check \Gamma \con(C) \iso & \check \Gamma \big( \con(R) \tensor C
    \big) = \mathrm{Tot}\, D_{*,*} \big( \con(R) \tensor C \big)\\
    &\qquad\qquad \rTo[l>=3em]^\simeq_\omega \mathrm{Tot}\, D_{*,*}
    \big( \mathcal{O}(0) \tensor C \big) = \check \Gamma \big(
    \mathcal{O}(0) \tensor C \big)
  \end{split}
  \] is thus a quasi-isomorphism.
\end{proof}

\subsection{A model for suspension}

For any presheaf $Y \in \pre(P)$ we let $Y[n]$ denote the $n$th
suspension of~$Y$, that is, the diagram given by $F \mapsto Y[n]^F =
Y^F[n]$, cf.~Definition \ref{def:can_const_susp}~\eqref{item:susp},
with structure maps induced by those of~$Y$.

\begin{definition}
  Let $F$~be a face of~$P$ (possibly empty), and let $Y \in
  \pre(P)$. We define a new presheaf $F_*Y$ by the rule
  \[F_*Y \colon G \mapsto Y^{G \vee F}\] where $G \vee F$ is the join
  of~$G$ and~$F$, that is, the smallest face of~$P$ containing $G \cup
  F$, and $(F_*Y)^G = Y^{F \vee G}$ is considered as an
  $A_G$\nbd-module chain complex by restriction of scalars.
\end{definition}

\begin{remark}
  If $R$~is a commutative ring and $Y \in \qco(P)$~is a sheaf, let
  $\mathcal{F}$~denote the chain complex of quasi-coherent sheaves
  on~$X_P$ determined by~$Y$. Then $F_*Y$ corresponds to $j_*
  (\mathcal{F}|_{U_F}) = j_* \big(\widetilde {Y^F}\big)$ where $j
  \colon U_F \rTo X_P$ is the inclusion.
\end{remark}

The construction of~$F_*Y$ is natural in~$Y$: For $F \subseteq
F^\prime$ a pair of faces of~$P$, the structure maps of~$Y$ induce a
map of presheaves $F_*Y \rTo F^\prime_*Y$. Moreover, every entry
of~$F_*(Y)$ is an $A_F$-module, by restriction of scalars
($A_\emptyset = R$ here) so that $\check \Gamma F_*Y$ is a chain
complex of $A_F$\nbd-modules. Hence the following definition is
meaningful:

\begin{definition}
  For $Y \in \pre(P)$ we define a new presheaf $\sigma Y$ by
  \[F \mapsto (\sigma Y)^F = \check \Gamma (F_*Y) \ .\]
\end{definition}

\begin{lemma}
  \label{lemma:susp_is_sigma}
  Let $Y \in \pre(P)$. There is a natural map of presheaves
  \[\alpha \colon Y[n] \rTo \sigma Y\] which is a quasi-isomorphism on
  each component. In particular, if~$Y$is a homotopy sheaf then so
  is~$\sigma Y$, and if~$Y$ is a perfect complex so is~$\sigma Y$.
\end{lemma}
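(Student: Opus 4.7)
The plan is to construct, for each $F \in F(P)_0$, the component $\alpha^F\colon Y^F[n] \to (\sigma Y)^F = \check\Gamma(F_*Y)$ as the composite
\[
  Y^F[n] \rTo^{\simeq} \check\Gamma\bigl(\con(Y^F)\bigr) \rTo \check\Gamma(F_*Y),
\]
where the first arrow is the quasi-isomorphism of Lemma~\ref{lem:susp_limit} and the second is $\check\Gamma$ applied to the natural map of $F(P)_0$-diagrams $\con(Y^F) \to F_*Y$ whose $G$-component is the structure map $Y^F \to Y^{F \vee G}$ of~$Y$ (here using $F \subseteq F \vee G$). Naturality of $\alpha$ in~$F$ reduces to functoriality of~$Y$ and of the pushforward: a containment $F \subseteq F'$ induces a map $F_*Y \to F'_*Y$ (componentwise the structure map $Y^{F \vee G} \to Y^{F' \vee G}$) fitting into the expected commutative square with the evident map $\con(Y^F) \to \con(Y^{F'})$.

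To show $\alpha^F$ is a quasi-isomorphism I would follow the pattern of Lemma~\ref{lem:susp_limit} and Lemma~\ref{lema:con_psi0} and apply Proposition~\ref{prop:ss_argument}~(2) to the map of double complexes from $Y^F[n]$, viewed as concentrated in column~$n$, to $D_{*,*}(F_*Y)$. It then suffices to check that $\alpha^F$ induces an isomorphism on horizontal homology.

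The horizontal homology of $D_{*,t}(F_*Y)$ in column~$s$ equals $\lim{}^{n-s}_{F(P)_0}(F_*Y_t)$ by Remark~\ref{rem:on_check_Gamma}~(\ref{item:limits}). The key computation is that these derived limits vanish for $s \ne n$ and equal $Y^F_t$ for $s = n$. Combinatorially, the surjection $p_F\colon F(P)_0 \to \mathrm{star}(F) := \{H \in F(P)_0 : H \supseteq F\}$ sending $G$ to $F \vee G$ is left adjoint to the inclusion, since $F \vee G \subseteq H \iff G \subseteq H$ whenever $H \supseteq F$. Thus $F_*Y_t = Y_t \circ p_F$ shares its derived limits with $Y_t|_{\mathrm{star}(F)}$; and since $\mathrm{star}(F)$ has the minimum element~$F$, the latter collapse to $Y^F_t$ concentrated in degree zero. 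Under the resulting identifications $\alpha^F$ is, in column~$n$, the identity, so Proposition~\ref{prop:ss_argument}~(2) delivers the required quasi-isomorphism; the ``in particular'' assertion then follows from the homotopy invariance of the notions of homotopy sheaf and perfect complex, using that $Y[n]$ is a homotopy sheaf (resp.\ perfect complex) whenever $Y$ is.

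The main obstacle is the derived-limit vanishing $\lim{}^{j}(F_*Y_t) = 0$ for $j > 0$, since the paper does not develop general homotopy cofinality. A self-contained substitute is to filter $D_{*,t}(F_*Y)$ by the value $H = F \vee G \in \mathrm{star}(F)$: each associated graded stratum is $Y^H_t$ tensored with a combinatorial chain complex indexed by $\{G \subseteq H : F \vee G = H\}$. This combinatorial complex computes the cellular homology of~$H$ relative to the subcomplex of proper faces of~$H$ containing~$F$; the latter subcomplex is a contractible closed star of~$F$ in $\partial H$ when $H \supsetneq F$, so the stratum is acyclic, while for $H = F$ the stratum contributes exactly $Y^F_t$ in column~$n$, yielding the claimed horizontal-homology computation.
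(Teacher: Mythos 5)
Your proposal is essentially correct and follows the same route as the paper: construct $\alpha^F$ as the composite $Y^F[n] \rTo \check\Gamma\big(\con(Y^F)\big) \rTo \check\Gamma(F_*Y)$, then run the spectral sequence of Proposition~\ref{prop:ss_argument}~(2) and reduce the quasi-isomorphism claim to a computation of the derived limits $\lim^q(F_*Y_t)$, using the poset adjunction $p_F \dashv \iota$ (with $p_F(G) = F\vee G$ and $\iota$ the inclusion of $\mathrm{st}\,F$). The naturality discussion and the deduction of the ``in particular'' statements from homotopy invariance are also fine.

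The one place where your account diverges is your worry that the derived-limit vanishing requires ``general homotopy cofinality'' that the paper does not supply. In fact your adjunction observation already carries the whole weight, and the paper makes exactly this move: $p_F \dashv \iota$ at the level of posets induces an adjunction $(-)|_{\mathrm{st}\,F} \dashv F_*$ between the diagram categories $[F(P)_0, \RMod]$ and $[\mathrm{st}\,F, \RMod]$. Since restriction is exact, its right adjoint $F_*$ preserves injectives, so an injective resolution $B \rTo I^\bullet$ over $\mathrm{st}\,F$ pushes forward to an injective resolution $F_*B \rTo F_*I^\bullet$ over $F(P)_0$, and $\lim^q_{F(P)_0} F_*B \iso \lim^q_{\mathrm{st}\,F} B$. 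Combined with $\lim$ being exact on $\mathrm{st}\,F$ (it has minimum element $F$) this gives the concentration in column $n$ with value $Y^F_t$; the paper also explicitly notes that the comparison map is the identity on $F$-components, which is the precise content of your phrase ``under the resulting identifications $\alpha^F$ is the identity in column $n$.'' So there is no gap to fill.

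Your filtration-by-$H = F\vee G$ substitute is a reasonable alternative and is likely provable, but it is not needed, and as stated it glosses over some points: you would have to filter by $\dim(F \vee G)$ to get an honest (decreasing) filtration respected by the differential, verify that the associated graded $C(H)$ is the relative cellular cochain complex of the pair $\big(H,\, \overline{\mathrm{st}}(F,\partial H)\big)$ with the correct reindexing, and check that the closed star of a proper face $F$ in the boundary of a polytope $H$ is contractible (true, but in a polytopal rather than simplicial setting it deserves a reference or a short argument, e.g.\ a straight-line retraction onto $F$). Since $C(H)$ is a bounded complex of free $R$-modules, acyclicity implies contractibility, so tensoring with $Y^H_t$ is harmless. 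Still, the adjunction route is shorter and is the one the paper takes.
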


\begin{proof}
  Let $\gamma Y$ denote the presheaf $(\gamma Y)^F = \check \Gamma
  \con(Y^F)$. For any face $F \in F(P)_0$ the structure maps of~$Y$
  induce a map of diagrams $\con(Y^F) \rTo F_*Y$ and thus a map
  $\check \Gamma \con(Y^F) \rTo \check \Gamma F_*Y$. This construction
  is natural in~$F$ so we obtain maps of presheaves
  \[Y[n] \rTo \gamma Y \rTo \sigma Y \ ,\] the first one consisting of
  the canonical quasi-isomorphisms of Lemma~\ref{lem:susp_limit}. The
  composition of these two maps is the~$\alpha$ of the Lemma, and we
  are left to prove that the map
  \[(\gamma Y)^F = \check \Gamma \con(Y^F) \rTo \check \Gamma F_*Y =
  (\sigma Y)^F\] is a quasi-isomorphism for each $F \in F(P)_0$; we
  will use the (by now familiar) spectral sequence comparison
  argument.

  Write $\mathrm{st}\,F = \{G \in F(P)_0 \,|\, G \supseteq F\}$, a
  sub-poset of~$F(P)_0$. Given a diagram $A \colon F(P)_0 \rTo \RMod$
  we can consider its restriction $A|_{\mathrm{st}\,F}$ to the poset
  $\mathrm{st}\,F$. Conversely, a diagram $B \colon \mathrm{st}\, F
  \rTo \RMod$ can be extended to a diagram
  \[F_* B \colon F(P)_0 \rTo \RMod\ , \quad G \mapsto B^{G \vee F} \
  .\] In fact, extension and restriction form an adjoint pair, with
  restriction being the left adjoint. Both functors are exact.

  Now let $B \colon \mathrm{st}\,F \rTo \RMod$ be given, and let $B
  \rTo I^\bullet$ be an injective resolution of~$B$. Then $F_* B \rTo
  F_* I^\bullet$ is an injective resolution of~$F_* B$. Consequently,
  we have
  \begin{align*}
    \lim_\leftarrow{}^q B &= H^q \lim_{\mathrm{st}\, F} I^\bullet \\
    &= H^q \hom \big((\con R)|_{\mathrm{st}\,F},\, I^\bullet\big) \\
    &= H^q \hom (\con R,\, F_* I^\bullet) \\
    &= H^q \lim F_* I^\bullet \\
    &= \lim_\leftarrow{}^q F_* B \ .
  \end{align*}
  On the other hand, $\mathrm{st}\,F$ has minimal element~$F$ so that
  $\lim B = B^F$, and $\lim$ is exact; that is,
  \[\lim_\leftarrow{}^q B =
  \begin{cases}
    B^F & \text{ if } q = 0\\ 0 & \text{ else.}
  \end{cases}
  \]
  These calculations apply in particular to $B = A|_{\mathrm{st}\, F}$
  for $A$~an $F(P)_0$\nbd-diagram of $R$\nbd-modules. Since
  $\big(F_*(A|_{\mathrm{st}\,F})\big)^F = A^F$ this means that
  \[\lim_\leftarrow{}^q F_*(A|_{\mathrm{st}\,F}) =
  \begin{cases}
    A^F & \text{ if } q = 0\\ 0 & \text{ else.}
  \end{cases}
  \]
  The calculations also imply that the obvious map
  $\con(B^F)|_{\mathrm{st}\,F} \rTo B$, for arbitrary~$B$ as before
  and its adjoint $\con B^F \rTo F_*B$ induce isomorphisms
  \[\lim_\leftarrow{}^q \big(\con B^F\big)|_{\mathrm{st}\,F} \rTo^\iso
  \lim_\leftarrow{}^q B \quad\text{and}\quad \lim_\leftarrow{}^q \con
  B^F \rTo^\iso \lim_\leftarrow{}^q F_*B\] as both are the identity on
  $F$-components.

  Let us return to the map
  \[(\gamma Y)^F = \check \Gamma \con(Y^F) \rTo \check \Gamma F_*Y =
  (\sigma Y)^F \ .\] It is induced by a map of double chain complexes
  \[D_{*,*} (\con Y^F) \rTo D_{*,*} (F_* Y)\] which, when restricted
  to $t$th horizontal chain complexes, is a map of chain complexes
  computing the higher derived limits $\lim\limits_\leftarrow{}^{n-*}$
  of the diagrams $A = \con Y^F_t$ and~$F_*
  \big(Y_t|_{\mathrm{st}\,F}\big)$, respectively, by
  Remark~\ref{rem:on_check_Gamma}~\eqref{item:limits}; by the
  calculation above, applied to~$A$ and~$B = Y_t|_{\mathrm{st}\,F}$,
  this yields an isomorphism on homology modules for all values
  of~$*$. In other words, the $E^1$\nbd-terms of the two spectral
  sequences are isomorphic, hence their abutments are as well, so the
  map in question is a quasi-isomorphism as claimed.
\end{proof}

\subsection{Relating $\mathcal{O}(0)\tensor \check \Gamma$ to $n$th
  suspension}
We proceed to construct a map between the functors $\mathcal{O}(0)
\tensor \check \Gamma$ and $n$th suspension which will be used later
in $K$\nbd-theoretical computations.

\begin{numpar}
  Let $Y \in \pre(P)$, and fix $F \in F(P)_0$. The
  structure maps of~$Y$ induce a canonical map $Y^F \rTo \lim F_*Y$ and
  thus, by forcing equivariance, a map
  \begin{equation}
    \label{eq:map_rho_F}
    \rho^F \colon \mathcal{O}(0) \tensor Y^F \rTo F_*Y \ .
  \end{equation}
\end{numpar}

\begin{lemma}
  \label{lem:gamma_rho_F_qiso}
  For $Y \in \hco(P)$ the map $\check \Gamma (\rho^F) \colon \check
  \Gamma \big (\mathcal{O}(0) \tensor Y^F \big) \rTo \check \Gamma
  F_*Y$ is a quasi-isomorphism.
\end{lemma}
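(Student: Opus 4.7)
The plan is to factor the map~$\rho^F$ through the constant diagram $\con(Y^F)$ and then conclude by two-out-of-three, invoking the two preceding lemmas.

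Unwinding the definition~\eqref{eq:map_rho_F} of~$\rho^F$ shows that its $G$-component sends $1 \tensor y \in A_G \tensor_R Y^F_t$ to the restriction $y|_{G \vee F}$ of~$y$ along the structure map $Y^F \rTo Y^{G \vee F}$. Pre\nbd composing with the natural map $\delta \colon \con(Y^F) \rTo \mathcal{O}(0) \tensor Y^F$ of Lemma~\ref{lema:con_psi0} (applied with $C = Y^F$) therefore reproduces the map of presheaves $\beta \colon \con(Y^F) \rTo F_*Y$ whose $G$-component is exactly the structure map $Y^F \rTo Y^{G \vee F}$. So the first step will be to verify the identity $\rho^F \circ \delta = \beta$ of presheaf maps, which is essentially definitional.

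Having assembled this commuting triangle I would apply~$\check\Gamma$. By Lemma~\ref{lema:con_psi0} the map $\check\Gamma(\delta)$ is a quasi-isomorphism. The map $\check\Gamma(\beta)$, on the other hand, coincides with the $F$-component of the presheaf map $\gamma Y \rTo \sigma Y$ studied in the proof of Lemma~\ref{lemma:susp_is_sigma}; that proof already verifies, via the restriction/extension adjunction along $\mathrm{st}\, F \subseteq F(P)_0$ together with a spectral sequence comparison, that this particular component is a quasi-isomorphism. Two-out-of-three then forces $\check\Gamma(\rho^F)$ to be a quasi-isomorphism, as required.

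The main obstacle is really only the bookkeeping in the first step---tracing through the definitions of~$\rho^F$, of~$\delta$, and of the structure maps of $\mathcal{O}(0) \tensor Y^F$ to confirm that the triangle does commute on the nose. Once that is done, the actual homological content has been dispatched by Lemmas~\ref{lema:con_psi0} and~\ref{lemma:susp_is_sigma}. It is worth noting that the hypothesis $Y \in \hco(P)$ plays no role in this argument; the conclusion holds for every $Y \in \pre(P)$.
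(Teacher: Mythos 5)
Your proposal is correct and is essentially the paper's argument in thinly disguised form: the paper exhibits the same data as a commutative square with $Y^F[n]$ in the fourth corner, whereas you factor $\rho^F$ through $\con(Y^F)$ and invoke two sides of that square directly; the identity $\rho^F \circ \delta = \beta$ that you check is exactly the commutativity the paper uses, and the quasi-isomorphism statement for $\check\Gamma(\beta)$ is the one established inside the proof of Lemma~\ref{lemma:susp_is_sigma}. Your closing remark that the hypothesis $Y \in \hco(P)$ is not actually used is correct --- the three ingredients (Lemmas~\ref{lema:con_psi0}, \ref{lem:susp_limit}, \ref{lemma:susp_is_sigma}) all apply to arbitrary presheaves; the paper states the lemma with the homotopy-sheaf hypothesis only because that is the setting in which it is applied. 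One small warning: you reuse the symbol~$\beta$ for the map $\con(Y^F) \rTo F_*Y$, but the paper already reserves~$\beta$ for the map of~\eqref{eq:map_beta}; pick a different letter to avoid confusion with the very next lemma.
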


\begin{proof}
  The map $\check \Gamma (\rho^F)$ fits into a commutative square diagram
  \begin{diagram}
    \check \Gamma \big (\mathcal{O}(0) \tensor Y^F \big)
    & \rTo[l>=4em]^{\check \Gamma (\rho^F)} & \check \Gamma F_* Y \\
    \uTo && \uTo \\
    \check \Gamma \mathrm{con}\, (Y^F) & \lTo & Y^F[n]
  \end{diagram}
  Left, bottom and right map are quasi-isomorphisms by
  Lemmas~\ref{lema:con_psi0}, \ref{lem:susp_limit}
  and~\ref{lemma:susp_is_sigma}, respectively, hence the top map is a
  quasi-isomorphism as well.
\end{proof}

\begin{numpar}
  Let $Y \in \pre(P)$. The structure maps of~$Y$ induce, for each $F
  \in F(P)_0$, a map of presheaves $Y \rTo F_*Y$ and hence a map of
  chain complexes $\check \Gamma Y \rTo \check \Gamma F_*Y = (\sigma
  Y)^F$. Since this is natural in~$F$ we obtain a map
  \[\check \Gamma Y \rTo \lim_{F \in F(P)_0} \check \Gamma F_*Y =
  \lim_{F \in F(P)_0} (\sigma Y)^F\] which, by forcing equivariance,
  defines a map
  \begin{equation}
    \label{eq:map_beta} \beta \colon \mathcal{O}(0) \tensor \check
    \Gamma Y \rTo \sigma Y \ .
  \end{equation}
\end{numpar}

\begin{lemma}
  \label{lem:gamma_psi0_gamma}
  Let $Y \in \hco(P)$. Then $\check \Gamma(\beta) \colon \check \Gamma
  \big( \mathcal{O}(0) \tensor \check \Gamma Y\big) \rTo \check \Gamma
  (\sigma Y)$ is a quasi-isomorphism of chain complexes of
  $R$\nbd-modules.
\end{lemma}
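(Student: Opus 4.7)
The plan is to exhibit $\check \Gamma(\beta)$ as the total of a map of bicomplexes one of whose directions is bounded, and to apply Proposition~\ref{prop:ss_argument}~(2) after reducing the row-wise behaviour of $\check \Gamma(\beta)$ to a direct sum of the quasi-isomorphisms supplied by Lemma~\ref{lem:gamma_rho_F_qiso}.

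First I would unfold both sides into total complexes of the same-shape triple complexes indexed by $(v,u,b)$. The $(v,u,b)$\nbd-slots of source and target are, respectively,
\[\bigoplus_{\dim F = n-v,\ \dim H = n-u} A_F \tensor_R Y^H_b \quad \text{and} \quad \bigoplus_{\dim F = n-v,\ \dim H = n-u} Y^{F \vee H}_b \ ,\]
where $v$ is the outer \v Cech direction of the ambient~$\check \Gamma$, $u$ is the inner \v Cech direction (inside $\check \Gamma Y$ on the source and inside $\check \Gamma H_*Y$ on the target), and $b$ is the chain index inherited from~$Y$. Tracing the definition of~$\beta$, its restriction to each $(v,u,b)$\nbd-slot is $a \tensor y \mapsto a \cdot \mathrm{str}_{H,\, F \vee H}(y)$. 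Using the symmetry $F \vee H = H \vee F$, this is precisely the $F$\nbd-component of $\rho^H \colon \mathcal{O}(0) \tensor Y^H \rTo H_*Y$ restricted to chain degree~$b$: the outer and inner \v Cech directions on the two sides of $\check \Gamma(\beta)$ play swapped roles compared with the map~$\rho^H$.

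Next I would collapse $v$ and $b$ into a single horizontal index $c = v+b$ while keeping $u$ as vertical. The resulting bicomplex is concentrated in the $n+1$ rows $u \in [0,n]$, so Proposition~\ref{prop:ss_argument}~(2) is available. For each fixed~$u$, the $u$\nbd-row of the source, viewed as a chain complex in~$c$, is canonically identified with
\[\bigoplus_{H \in F(P)_0,\ \dim H = n-u} \check \Gamma \bigl( \mathcal{O}(0) \tensor Y^H \bigr)\ ,\]
since its combined $(v,b)$\nbd-differential matches the total-complex differential of each summand; likewise the $u$\nbd-row of the target is $\bigoplus_{\dim H=n-u} \check \Gamma(H_*Y)$. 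Under these identifications the restriction of $\check \Gamma(\beta)$ to the $u$\nbd-row becomes the direct sum $\bigoplus_{\dim H=n-u} \check \Gamma(\rho^H)$.

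By Lemma~\ref{lem:gamma_rho_F_qiso} each $\check \Gamma(\rho^H)$ is a quasi-isomorphism, as is any finite direct sum of such; therefore $\check \Gamma(\beta)$ induces an isomorphism on horizontal homology modules of every row of the bicomplex. Proposition~\ref{prop:ss_argument}~(2) then yields that $\check \Gamma(\beta)$ is a quasi-isomorphism of total complexes, which is the assertion of the Lemma. The main obstacle is the identification of the $u$\nbd-row of the collapsed bicomplex with the stated direct sum of \v Cech complexes: one must track how the $c$\nbd-differential decomposes over~$H$ and check that the signs inherited from the total-complex construction agree with those built into $\check \Gamma(\mathcal{O}(0) \tensor Y^H)$ and $\check \Gamma(H_*Y)$. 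Once this bookkeeping is settled, no further ingredients are needed.
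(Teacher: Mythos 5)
Your proposal is correct and follows essentially the same route as the paper: there one realizes $\check\Gamma(\beta)$ as the total of a triple complex $A_{x,y,z} \to B_{x,y,z}$, retotalizes via $\mathrm{Tot}^{x,z}$ so that the middle (inner) \v Cech index is held as the row index, identifies each row of the resulting bicomplex with $\bigoplus_{\dim F = n-q} \check\Gamma(\rho^F)$, and invokes Proposition~\ref{prop:ss_argument}~(2) together with Lemma~\ref{lem:gamma_rho_F_qiso}. The differential/sign bookkeeping you flag at the end is exactly the point the paper dismisses as ``a tedious but straightforward tracing of signs,'' so nothing substantive is missing.
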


Before we delve into the proof, we fix conventions regarding triple
chain complexes. Suppose we have a threefold $R$\nbd-module chain
complex $A_{*,*,*}$: a $\bZ^3$\nbd-indexed collection of
$R$\nbd-modules~$A_{x,y,z}$ together with pairwise commuting
differentials
\begin{align*}
  \bar \partial_x \colon A_{x,y,z} \rTo A_{x-1,y,z} \\
  \bar \partial_y \colon A_{x,y,z} \rTo A_{x,y-1,z} \\
  \bar \partial_z \colon A_{x,y,z} \rTo A_{x,y,z-1}
\end{align*}
which square to trivial maps. That is, we are looking at an object of
the category $\Ch (\Ch (\Ch (R)))$. Then we can define new differentials by
\[\partial_x = \bar \partial_x \ , \quad \partial_y = (-1)^x
\bar \partial_y \ , \quad \partial_z = (-1)^{x+y} \bar \partial_z\]
which are easily checked to anti-commute: $\partial_i \partial_j =
(\delta_{i,j} -1) \partial_j \partial_i$ (where $\delta_{i,j}$ is the
usual \textsc{Kronecker\/} delta symbol). We say that the graded
module $A_{*,*,*}$ together with the maps~$\partial_i$, $i=x,y,z$ is a
triple chain complex (in analogy to the established usage of the term
double complex in the literature).  We define the total complex
$\mathrm{Tot}^{x,y,z} (A_{*,*,*})$ by setting
\[\mathrm{Tot}^{x,y,z} (A_{*,*,*})_n = \bigoplus_{x+y+z=n} A_{x,y,z}\]
equipped with the differential defined by $\partial = \partial_x
+ \partial_y + \partial_z$. This is an $R$\nbd-module chain complex,
the relation $\partial^2 = 0$ is easily verified.

The relevant observation here is that one can do the totalisation in
two steps. Define $\mathrm{Tot}^{y,z} (A_{*,*,*})$ by
\[\mathrm{Tot}^{y,z} (A_{*,*,*})_{p,q} = \bigoplus_{y+z = q}
A_{p,y,z} \ ;\] this is a double chain complex when equipped with
``horizontal'' differential $\partial^h = \partial_x$ and ``vertical''
differential $\partial^v = \partial_y + \partial_z$. It is a matter of
tracing definitions to see that we have an equality of chain complexes
\[\mathrm{Tot}\, \big( \mathrm{Tot}^{y,z} (A_{*,*,*}) \big) =
\mathrm{Tot}^{x,y,z} (A_{*,*,*}) \ .\]

Similarly, we can define a double chain complex  $\mathrm{Tot}^{x,z} (A_{*,*,*})$ by
\[\mathrm{Tot}^{x,z} (A_{*,*,*})_{p,q} = \bigoplus_{x+z = p}
A_{x,q,z} \ ;\] equipped with ``horizontal'' differential $\partial^h
= \partial_x + \partial_z$ and ``vertical'' differential $\partial^v
= \partial_y$. We then have an equality of chain complexes
\[\mathrm{Tot}\, \big( \mathrm{Tot}^{x,z} (A_{*,*,*}) \big) =
\mathrm{Tot}^{x,y,z} (A_{*,*,*}) \ .\]

\begin{proof}[Proof of Lemma~\ref{lem:gamma_psi0_gamma}]
  The map $\check \Gamma (\beta)$ is a map of $R$\nbd-module chain
  complexes which can, in fact, be described by a map of triple chain
  complexes. In more detail, define
  \[A_{x,y,z} = \bigoplus_{\dim G = n-x} \bigoplus_{\dim F = n-y}
  \mathcal{O}(0)^G \tensor_R Y^F_z \ ,\] equipped with differentials
  $\bar \partial_z$ given by the differential in~$Y^F$, and
  differentials $\bar \partial_x$ and $\bar \partial_y$ determined by
  incidence numbers $[G_1:G_2]$ and $[F_1:F_2]$,
  respectively. Similarly, define
  \[B_{x,y,z} = \bigoplus_{\dim G = n-x} \bigoplus_{\dim F = n-y} Y^{G
    \vee F}_z \ ,\] equipped with differentials as above. Then both
  $A_{x,y,z}$ and~$B_{x,y,z}$ are threefold chain complexes and thus
  determine, by modification of the differentials, triple chain
  complexes as explained above. The structure maps of~$Y$ induce a map
  of triple chain complexes $\gamma \colon A_{x,y,z} \rTo
  B_{x,y,z}$. A tedious but straightforward tracing of signs and direct
  sums involved shows:
  \begin{enumerate}[(i)]
  \item $D_{*,*}\big( \mathcal{O}(0) \tensor \check \Gamma Y \big) = \mathrm{Tot}^{y,z} (A_{*,*,*})$,
  \item $D_{*,*} \big( \sigma Y \big) = \mathrm{Tot}^{y,z} (B_{*,*,*})$,
  \item $D_{*,*} (\beta) = \mathrm{Tot}^{y,z} (\gamma)$.
  \end{enumerate}
  Since $\mathrm{Tot} \circ \mathrm{Tot}^{y,z} = \mathrm{Tot}^{x,y,z}$
  this implies that
  \begin{equation}
    \label{eq:Tot_is_gamma}
    \mathrm{Tot}^{x,y,z} (\gamma) = \check \Gamma (\beta) \ .
  \end{equation}

  Let us now consider the double chain complex map $\mathrm{Tot}^{x,z}
  (\gamma)$. We claim that {\it $\mathrm{Tot}^{x,z} (\gamma)$ induces
    a quasi-isomorphism on horizontal chain complexes}. By the usual
  spectral sequence argument~\ref{prop:ss_argument}, this implies that
  $\mathrm{Tot}^{x,y,z} (\gamma) = \mathrm{Tot}\big(\mathrm{Tot}^{x,z}
  (\gamma) \big)$ is a quasi-isomorphism of $R$\nbd-module chain
  complexes, hence so is $\check \Gamma(\beta)$ in view
  of~(\ref{eq:Tot_is_gamma}).

  Fix an index $q \in \bZ$. The $q$th row of the source of
  $\mathrm{Tot}^{x,z} (\gamma)$ has $p$th entry
  \[\bigoplus_{\dim F = n-q} \bigoplus_{x+z=p} \bigoplus_{\dim G =
    n-x} \mathcal{O}(0)^G \tensor_R Y_z^F\] and is thus of the
  form
  \[\bigoplus_{\dim F = n-q} \check \Gamma \big (\mathcal{O}(0)
  \tensor Y^F \big) \ ,\] up to the (constant!) sign $(-1)^q$ in the
  differential of~$Y^F$. In particular this is non-trivial only for $0
  \leq q \leq n$.

  The $q$th row of the target of $\mathrm{Tot}^{x,z} (\gamma)$ has
  $p$th entry
  \[ \bigoplus_{\dim F = n-q} \bigoplus_{x+z=p} \bigoplus_{\dim G =
    n-x} Y^{G \vee F}_z = \bigoplus_{\dim F = n-q} \bigoplus_{x+z=p}
  \bigoplus_{\dim G = n-x} (F_*Y)^G_z\] and is thus of the form
  \[\bigoplus_{\dim F = n-q} \check \Gamma F_*Y \ ,\] up to the
  (constant!) sign $(-1)^q$ in the differential of~$F_*Y$. In particular this is non-trivial only for $0
  \leq q \leq n$.

  Thus in row~$q$ the map $\mathrm{Tot}^{x,z} (\gamma)$ is, up to
  sign~$(-1)^q$ in the differentials of source and target, the direct
  sum of the maps~$\rho^F$ defined in~(\ref{eq:map_rho_F}) with $\dim
  F = n-q$. By Lemma~\ref{lem:gamma_rho_F_qiso} this means that
  $\mathrm{Tot}^{x,z} (\gamma)$ is a quasi-isomorphism of horizontal
  chain complexes as claimed.
\end{proof}

\section{Splitting the $K$-theory}

Let $R$ be a ring with unit. For this entire section we assume that
$R$~is commutative, or else left \textsc{noether}ian.

\subsection{Reduced $K$-theory}
\label{sec:reduced-k}

Recall that an $R$\nbd-module chain complex~$C$ is called {\it
  perfect\/} if it is quasi-isomorphic to a strict perfect complex,
that is, a bounded complex~$B$ of finitely generated projective
$R$\nbd-mod\-ules; if this is the case, there will always be a
quasi-isomorphism $B \rTo C$.  Write $\perf(R)$ for the category of
perfect chain complexes of $R$\nbd-modules. We write $K(R)$ for the
$K$\nbd-theory space of the complicial bi\textsc{Waldhausen}
category $\perf(R)$ equipped with quasi-iso\-mor\-phisms as weak
equivalences, and the degreewise split monomorphisms with cokernel
in~$\perf(R)$ as cofibrations.

For an $n$\nbd-dimensional polytope $P \subset \bR^n$ we have defined
the category $\perf(P)$ of perfect complexes in~\ref{def:perfect};
recall that an object of~$\perf(P)$ is a diagram indexed by the face
lattice of~$P$ with values in perfect chain complexes of modules over
different rings, subject to a gluing condition. The $K$\nbd-theory
space of $\perf(P)$ is denoted by~$K(P)$,
cf.~\S\ref{subsec:algebraic-k-theory}.

Let $\perf(P)^{[0]}$ denote the full subcategory of those $Y \in
\perf(P)$ such that $\check \Gamma (Y)$ is acyclic,
cf.~Definition~\ref{def:cech_complex}~(\ref{item:cech3}). This is a
complicial bi\textsc{Wald\-hausen} category with the usual
conventions. Its associated $K$\nbd-theory space is called the {\it
  reduced $K$-theory of~$P$} and denoted~$\tilde K(P)$.

We call a map $f \colon Y \rTo Z$ in $\perf(P)$ an {\it
  $h_{[0]}$\nbd-equivalence\/} if $\check \Gamma(f)$ is a
quasi-isomorphism; with respect to these maps as weak equivalences,
$\perf(P)$ is (yet another) complicial bi\textsc{Waldhausen}
category. Note that every quasi-isomorphism in~$\perf(P)$ is an
$h_{[0]}$\nbd-equivalence as the functor $\check \Gamma$ preserves
quasi-isomorphisms by Proposition~\ref{prop:cech_invariant}.

We will need the functor
\[\psi_0 \colon \perf(R) \rTo \perf(P) \ , C \mapsto \mathcal{O}(0)
\tensor C \ .\] It is easy to see that $\psi_0$ takes values in
perfect complexes. Indeed, for $C \in \perf(C)$ there is a strict
perfect complex of $R$\nbd-modules~$D$ which is quasi-isomorphic
to~$C$. Since $A_F$~is a free $R$-module, for each $F \in F(P)_0$,
taking tensor product with~$A_F$ over~$R$ is exact. Consequently,
$\psi_0(Y)^F$ is quasi-isomorphic to~$A_F \tensor_R D$ which is a
strict perfect complex of $A_F$\nbd-modules.

\begin{proposition}
  \label{prop:reduced_ktheory}
  There is a fibration sequence of $K$-theory spaces
  \[\tilde K(P) \rTo K(P) \rTo^{\check \Gamma} K(R)\] which has a
  section up to homotopy and up to sign induced by the
  functor~$\psi_0$. Hence there is a splitting up to homotopy
  \[\tilde K(P) \times K(R) \simeq K(P) \ .\]
\end{proposition}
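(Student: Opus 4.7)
The plan is to apply \textsc{Waldhausen}'s fibration theorem to the complicial bi\textsc{Waldhausen} category $\perf(P)$ endowed with two classes of weak equivalences: the quasi-isomorphisms (the ``fine'' structure of~\S\ref{subsec:algebraic-k-theory}) and the $h_{[0]}$-equivalences (the ``coarse'' structure just introduced). Proposition~\ref{prop:cech_invariant} says every quasi-isomorphism is an $h_{[0]}$-equivalence, and the full subcategory of those $Y \in \perf(P)$ for which $0 \rTo Y$ is an $h_{[0]}$-equivalence coincides with $\perf(P)^{[0]}$ by definition. The fibration theorem then produces a homotopy fibration sequence
\[ \tilde K(P) \rTo K(P) \rTo K\big(\perf(P),\, h_{[0]}\big) \ . \]

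The second step is to identify $K\big(\perf(P),\, h_{[0]}\big)$ with $K(R)$ via the exact functor $\check \Gamma$. By Theorem~\ref{thm:Cech_finite}, $\check \Gamma$ takes values in $\perf(R)$, and by definition it sends $h_{[0]}$-equivalences to quasi-isomorphisms, so it induces a map $\check \Gamma_* \colon K\big(\perf(P),\, h_{[0]}\big) \rTo K(R)$. I propose to show that $(\psi_0)_*$ is a two-sided homotopy inverse up to the sign $(-1)^n$ coming from $n$-fold suspension.

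Both composites fit into natural chains of weak equivalences that have already been assembled. For $C \in \perf(R)$, the zig-zag $\check \Gamma \psi_0(C) \lTo^\simeq \check \Gamma \con(C) \lTo^\simeq C[n]$ supplied by Lemmas~\ref{lema:con_psi0} and~\ref{lem:susp_limit} consists of natural quasi-isomorphisms and exhibits $\check \Gamma \circ \psi_0$ as naturally quasi-isomorphic to $(-)[n]$. Conversely, for $Y \in \perf(P)$ the natural maps $\psi_0(\check \Gamma Y) \rTo^\beta \sigma Y \lTo^\alpha Y[n]$, with $\beta$ an $h_{[0]}$-equivalence by Lemma~\ref{lem:gamma_psi0_gamma} and $\alpha$ a quasi-isomorphism by Lemma~\ref{lemma:susp_is_sigma}, exhibit $\psi_0 \circ \check \Gamma$ as naturally $h_{[0]}$-equivalent to $(-)[n]$. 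Since $n$-fold suspension induces multiplication by $(-1)^n$ on $K$\nbd-theory, both composites $\check \Gamma_* \circ (\psi_0)_*$ and $(\psi_0)_* \circ \check \Gamma_*$ are homotopic to $\pm \mathrm{id}$, so $\check \Gamma_*$ is a homotopy equivalence with homotopy inverse $\pm (\psi_0)_*$. Substituting $K(R)$ for $K\big(\perf(P),\, h_{[0]}\big)$ yields $\tilde K(P) \rTo K(P) \rTo^{\check \Gamma} K(R)$, with $\psi_0$ providing the section up to sign, and the splitting $\tilde K(P) \times K(R) \simeq K(P)$ follows.

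The main obstacle I anticipate is verifying the hypotheses of the fibration theorem for the pair of weak equivalence structures on $\perf(P)$, most notably saturation of the $h_{[0]}$-equivalences and the existence of a cylinder functor compatible with both structures; these axioms are standard-looking in the complicial setting (and the cylinder functor of \cite[1.2.11]{TT-K} should transport), but they need to be checked explicitly. Granting that, the argument is essentially bookkeeping of the four natural zig-zags of weak equivalences listed above.
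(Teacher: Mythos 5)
Your proposal is correct and follows essentially the same route as the paper: apply \textsc{Waldhausen}'s fibration theorem to the pair of weak-equivalence structures $w \subset h_{[0]}$ on $\perf(P)$, identify the base $K(\perf(P),\,h_{[0]})$ with $K(R)$ via $\check\Gamma$ using the two natural zig-zags built from Lemmas~\ref{lem:susp_limit}, \ref{lema:con_psi0}, \ref{lemma:susp_is_sigma}, and~\ref{lem:gamma_psi0_gamma}, and observe that $n$-fold suspension is a self-equivalence of $K(R)$. The only cosmetic difference is that you package the two zig-zags as exhibiting $(\psi_0)_*$ as a two-sided homotopy inverse (so the conclusion is immediate), whereas the paper splits the same input into separate surjectivity and injectivity statements on homotopy groups; the paper also does not spell out the cylinder/saturation hypotheses you flag as an ``obstacle,'' treating them as standard in the complicial bi\textsc{Waldhausen} setting, which is indeed the received view. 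One small imprecision worth smoothing: $n$-fold suspension induces a self-homotopy equivalence of $K(R)$ which is $(-1)^n$ on homotopy groups, but it is not literally $\pm\mathrm{id}$ as a map of spaces, so the phrase ``homotopic to $\pm\mathrm{id}$'' should be ``homotopic to the self-equivalence induced by $[n]$''; this does not affect the conclusion that $\check\Gamma_*$ is a homotopy equivalence with (up to that self-equivalence) $(\psi_0)_*$ as inverse.
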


\begin{proof}
  By the Fibration Theorem \cite[1.6.4]{W-Ktheory} the sequences of
  exact functors of bi\-\textsc{Wald\-hausen} categories
  \begin{equation}
    \label{eq:fibration_0}
    (\perf(P)^{[0]},\, w) \rTo^{\subseteq} (\perf(P),\, w) \rTo
    (\perf(P),\,h_{[0]}) \ ,
  \end{equation}
  where $w$ stands for quasi-isomorphisms as weak equivalences,
  induces a fibration sequence of $K$\nbd-theory spaces.

  We have exact functors
  \[\perf(R) \rTo^{\psi_0} (\perf(P),\, h_{[0]}) \quad \text{and}
  \quad (\perf(P),\, h_{[0]}) \rTo^{\check \Gamma} \perf(R) \ ,\] the
  latter being well defined by Theorem~\ref{thm:Cech_finite}. By
  Lemmas~\ref{lem:susp_limit} and~\ref{lema:con_psi0} we have a
  natural weak equivalence of functors from the $n$th suspension $C
  \mapsto C[n]$ to the composition $\check \Gamma \circ \psi_0$. Since
  suspension induces a self homotopy equivalence on the $K$\nbd-theory
  space~$K(R)$, the functor $\check \Gamma$ is surjective on
  homotopy groups.

  By Lemmas~\ref{lemma:susp_is_sigma} and~\ref{lem:gamma_psi0_gamma}
  there is a chain of natural transformation of functors represented by
  \[Y[n] \rTo \sigma Y \lTo \mathcal{O}(0) \tensor \check \Gamma (Y)
  = \psi_0 \circ \check \Gamma (Y)\] which is
  in fact a chain of $h_{[0]}$\nbd-equivalences of functors. Thus
  $\check \Gamma$ is injective on homotopy groups.

  In total, we have shown that the functor $\check \Gamma$ induces a
  homotopy equivalence from the $K$-theory of the base of the fibration
  sequence~(\ref{eq:fibration_0}) to~$K(R)$. The resulting fibration
  sequence
  \[\tilde K(P) \rTo K(P) \rTo^{\check \Gamma} K(R)\] has section up
  to homotopy and up to sign induced by~$\psi_0$ (as the composition
  $\check \Gamma \circ \psi_0$ is weakly equivalent to $n$th
  suspension, just as argued above) which yields the desired
  splitting.
\end{proof}

\subsection{Further splitting}
\label{sec:further-splitting}

If $\check \Gamma \big( \mathcal{O}(-1)\big)$ happens to be acyclic, we can split
off a further copy of~$K(R)$ from the reduced $K$\nbd-theory~$\tilde
K(P)$; acyclicity of $\check \Gamma \mathcal{O}(-j)$ for $j>1$ allows
to iterate the procedure. The argument is virtually the same as in
\S\ref{sec:reduced-k}, but the functors involve additional twisting.
We record the details.

For a given integer $k \geq 0$, let $\perf(P)^{[k]}$ denote the full
subcategory of those $Y \in \perf(P)$ such that $\check \Gamma Y(j)$
is acyclic for $0 \leq j \leq k$. This is a complicial
bi\textsc{Wald\-hausen} category with the usual conventions. Its
associated $K$\nbd-theory space is denoted~$K(P, [k])$; in
particular, $K(P, [0]) = \tilde K(P)$.

We call a map $f \colon Y \rTo Z$ in $\perf(P)$ an {\it
  $h_{[k]}$\nbd-equivalence\/} if $\check \Gamma f(j)$ is a
quasi-isomorphism for $0 \leq j \leq k$; with respect to these maps as
weak equivalences, $\perf(P)$ is (yet another) complicial
bi\textsc{Waldhausen} category. Note that every quasi-isomorphism
in~$\perf(P)$ is an $h_{[k]}$\nbd-equivalence as both twisting and the
functor $\check \Gamma$ preserve quasi-isomorphisms.

We will need the functors
\[\psi_{k} \colon \perf(R) \rTo \perf(P) \ , C \mapsto \mathcal{O}(k)
\tensor C\] ($k \in \bZ$ here). One can show that this functor takes
indeed values in perfect complexes; the argument is as
for~$\psi_0$. As a matter of notation, let us also introduce the $k$th
twist functor
\[\theta_k \colon \perf(P) \rTo \perf(P) \ , Y \mapsto Y(k) \ .\]

Recall that the polytope~$P$ determines a polynomial $E_P(x)$ with
rational coefficients such that $|E_P(-j)|$ is the number of integral
points in the interior of~$-jP$ for integers $j \geq 1$,
cf.~Theorem~\ref{thm:Ehrhart}. It follows that {\it if $E_P(-j) = 0$
  for some $j > 1$, then $E_P(-\ell)=0$ for $0 < \ell \leq
  j$}. Let~$n_P$ be the number of distinct integral roots of~$E_P(x)$;
then $n_P \in [0,n]$, and if $n_P \neq 0$ then $n_P$~is maximal among
the negatives of integer roots of~$E_P(x)$.

\begin{proposition}
  \label{prop:split_again}
  For $1 \leq \ell \leq n_P$ there is a fibration sequence of
  $K$\nbd-theory spaces
  \[K(P, [\ell]) \rTo K(P, [\ell-1]) \rTo[l>=3em]^{\check \Gamma \circ
    \theta_\ell} K(R)\] which has a section up to homotopy and up to
  sign induced by the functor $\psi_{-\ell}$. Consequently, we have a
  homotopy equivalence
  \[K(P, [\ell]) \times K(R) \simeq K(P, [\ell-1]) \ .\]
\end{proposition}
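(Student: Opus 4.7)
The plan is to imitate the proof of Proposition~\ref{prop:reduced_ktheory}, replacing $\check \Gamma$ by $\check \Gamma \circ \theta_\ell$ and $\psi_0$ by $\psi_{-\ell}$, and invoking the hypothesis $\ell \leq n_P$ to locate the relevant functors in the correct subcategories.

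\textbf{Step 1: the fibration sequence.} I would apply \textsc{Waldhausen}'s Fibration Theorem \cite[1.6.4]{W-Ktheory} to
\[(\perf(P)^{[\ell]},\, w) \rTo^\subseteq (\perf(P)^{[\ell-1]},\, w) \rTo (\perf(P)^{[\ell-1]},\, h_{[\ell]}) \ .\]
The key observation here is that for $f \colon Y \rTo Z$ between objects of $\perf(P)^{[\ell-1]}$ the maps $\check \Gamma f(j)$, $0 \leq j \leq \ell-1$, are automatically quasi-isomorphisms (as both source and target are acyclic by definition of $\perf(P)^{[\ell-1]}$), so that the $h_{[\ell]}$\nbd-equivalences on $\perf(P)^{[\ell-1]}$ are precisely the maps with $\check \Gamma f(\ell)$ a quasi-isomorphism; thus the $h_{[\ell]}$\nbd-acyclic objects inside $\perf(P)^{[\ell-1]}$ are exactly the objects of $\perf(P)^{[\ell]}$.

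\textbf{Step 2: the functors and their range.} The functor $\check \Gamma \circ \theta_\ell$ lands in $\perf(R)$ by Theorem~\ref{thm:Cech_finite}. The functor $\psi_{-\ell}$ lands in $\perf(P)^{[\ell-1]}$: for $0 \leq j \leq \ell-1$, $\psi_{-\ell}(C)(j) = \mathcal{O}(j-\ell) \tensor C$ with $-\ell \leq j - \ell \leq -1$, and since $\ell \leq n_P$ we have $E_P(j-\ell) = 0$ by Corollary~\ref{cor:zeros_of_Ehrhart}, so Lemma~\ref{lem:Gamma_of_psi_k} gives acyclicity of $\check \Gamma \big( \mathcal{O}(j-\ell) \tensor C\big)$.

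\textbf{Step 3: the two composites are naturally equivalent to suspension.} For $C \in \perf(R)$, $(\check \Gamma \circ \theta_\ell \circ \psi_{-\ell})(C) = \check \Gamma \big( \mathcal{O}(0) \tensor C \big)$, which is naturally quasi-isomorphic to $C[n]$ by chaining Lemma~\ref{lem:susp_limit} and Lemma~\ref{lema:con_psi0}. This gives surjectivity on homotopy groups for the map induced by $\check \Gamma \circ \theta_\ell$.

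For the reverse composite, given $Y \in \perf(P)^{[\ell-1]}$ apply the chain from Lemmas~\ref{lemma:susp_is_sigma} and~\ref{lem:gamma_psi0_gamma} (the $\ell=0$ argument) to the twisted presheaf $Z = \theta_\ell(Y) = Y(\ell)$ and then tensor everywhere with $\mathcal{O}(-\ell)$ (which preserves componentwise quasi-isomorphisms, as each $\mathcal{O}(-\ell)^F$ is a free $A_F$\nbd-module of rank~$1$). Using $\mathcal{O}(k) \tensor (C[n]) \iso (\mathcal{O}(k) \tensor C)[n]$ and the isomorphism $Y(\ell)(-\ell) \iso Y$, this yields a natural chain
\[Y[n] \rTo \theta_{-\ell}\big( \sigma Y(\ell) \big) \lTo \psi_{-\ell}\big( \check \Gamma \theta_\ell (Y) \big)\]
of componentwise quasi-isomorphisms in $\pre(P)$, hence of $h_{[\ell]}$\nbd-equivalences. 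The middle term lies in $\perf(P)^{[\ell-1]}$ because it is quasi-isomorphic to $Y[n]$ and the conditions defining $\perf(P)^{[\ell-1]}$ are homotopy invariant (Proposition~\ref{prop:cech_invariant}). This gives injectivity on homotopy groups.

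\textbf{Step 4: conclusion.} Combining Steps~2 and~3, the exact functor $\check \Gamma \circ \theta_\ell$ induces a homotopy equivalence from the $K$-theory space of $(\perf(P)^{[\ell-1]},\, h_{[\ell]})$ to $K(R)$, and $\psi_{-\ell}$ gives a section up to sign (the $n$th suspension being $\pm 1$ on $K$-theory). Inserting this into the fibration sequence from Step~1 yields the desired fibration $K(P, [\ell]) \rTo K(P, [\ell-1]) \rTo K(R)$ together with the splitting. The main bookkeeping hurdle is verifying that every presheaf in Step~3 really lies in $\perf(P)^{[\ell-1]}$ so that the chain is a chain of $h_{[\ell]}$\nbd-equivalences in the restricted category; this follows from the homotopy invariance of the defining conditions together with the range calculation in Step~2.
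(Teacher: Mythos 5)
Your Steps 1, 2, and the first half of Step 3 (surjectivity) match the paper's argument. But there is a genuine gap in the second half of Step 3: you assert that the chain
\[Z[n] \rTo \sigma Z \lTo \psi_0 \circ \check\Gamma(Z)\]
(with $Z = Y(\ell)$) is a chain of \emph{componentwise} quasi-isomorphisms, so that after twisting by $-\ell$ it automatically yields a chain of $h_{[\ell]}$\nbd-equivalences. This is false for the second map. Lemma~\ref{lemma:susp_is_sigma} does give that $\alpha \colon Z[n] \rTo \sigma Z$ is a quasi-isomorphism on each component, but Lemma~\ref{lem:gamma_psi0_gamma} only asserts that $\check\Gamma(\beta)$ is a quasi-isomorphism, not that $\beta \colon \psi_0 \circ \check\Gamma(Z) \rTo \sigma Z$ itself is a componentwise quasi-isomorphism. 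If $\beta$ were a componentwise quasi-isomorphism, Proposition~\ref{prop:cech_invariant} would give the conclusion of Lemma~\ref{lem:gamma_psi0_gamma} for free; the elaborate triple-complex argument the paper runs in that Lemma is there precisely because $\beta$ is only an $h_{[0]}$\nbd-equivalence, not a weak equivalence. Consequently, twisting the chain by $-\ell$ does not automatically give $h_{[\ell]}$\nbd-equivalences: the $h_{[0]}$ condition pushed down by $\theta_{-\ell}$ only controls the component at $j = \ell$ of the $h_{[\ell]}$ condition.

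The paper closes this gap by an explicit case analysis on $j$ for the twisted map $\nu \colon \psi_{-\ell} \circ \check\Gamma \circ \theta_\ell(Y) \rTo \theta_{-\ell}(\sigma Y(\ell))$. For $0 \leq j < \ell$, $\check\Gamma \circ \theta_j$ applied to the source equals $\check\Gamma \circ \psi_{j-\ell} \circ \check\Gamma \circ \theta_\ell(Y)$, which is acyclic by Lemma~\ref{lem:Gamma_of_psi_k} since $E_P(j-\ell) = 0$; and applied to the target it is weakly equivalent to $\check\Gamma\, Y(j)[n]$, which is acyclic because $Y \in \perf(P)^{[\ell-1]}$. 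For $j = \ell$, $\theta_\ell(\nu)$ is exactly the map from Lemma~\ref{lem:gamma_psi0_gamma} applied to $Y(\ell)$. So the map is a quasi-isomorphism after $\check\Gamma \circ \theta_j$ for all $0 \leq j \leq \ell$, i.e.\ an $h_{[\ell]}$\nbd-equivalence. You need this (or an equivalent) argument; the appeal to componentwise quasi-isomorphism is not a valid shortcut.
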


\begin{proof}
  The sequence of bi\textsc{Waldhausen\/} categories
  \[(\perf(P)^{[\ell]},\,w) \rTo^{\subseteq} (\perf(P)^{[\ell-1]},\,
  w) \rTo (\perf(P)^{[\ell-1]}, h_{[\ell]})\] induces a fibration
  sequence of $K$\nbd-theory spaces, by the Fibration Theorem
  \cite[1.6.4]{W-Ktheory}. We have to prove that the $K$\nbd-theory of
  its base is homotopy equivalent, via $\check \Gamma \circ
  \theta_\ell$, to~$K(R)$, where we have written~$\theta_\ell$ for the
  $\ell$th twist functor $Y \mapsto Y(\ell)$.

  First note that $\psi_{-\ell}$ restricts to an exact functor
  \[\psi_{-\ell} \colon \perf(R) \rTo (\perf(P)^{[\ell-1]},\,
  h_{[\ell]}) \ .\] We have to check that $\check \Gamma \circ
  \psi_j$, when applied to a complex of the form~$\psi_{-\ell} (C)$,
  produces an acyclic chain complex for $0 \leq j < \ell$. But for $j$
  in this range we have $-n_P \leq -\ell + j < 0$ so that $E_P(-\ell +
  j) = 0$. It follows that
  \[\check \Gamma \circ \theta_j \circ \psi_{-\ell} (C) \iso \check
  \Gamma C(-\ell+j)\] is acyclic by Lemma~\ref{lem:Gamma_of_psi_k}.

  Now the composition $(\check \Gamma \circ \theta_\ell) \circ
  \psi_{-\ell} \iso \check \Gamma \psi_0$ is weakly equivalent to the
  $n$th suspension endo-functor $C \mapsto C[n]$ of~$\Ch(R)$, by
  Lemmas~\ref{lem:susp_limit} and~\ref{lema:con_psi0}, hence $\check
  \Gamma \circ \theta_\ell$ induces a surjection on homotopy groups of
  $K$\nbd-theory spaces.

  According to Lemmas~\ref{lemma:susp_is_sigma}
  and~\ref{lem:gamma_psi0_gamma} the composition $\psi_0 \circ \check
  \Gamma$ is connected to the $n$th suspension functor by a chain of
  $h_{[0]}$\nbd-equivalences depicted
  \[Y[n] \rTo^\simeq \sigma Y \lTo \psi_0 \circ \check \Gamma (Y)\]
  (where the first map is actually a weak equivalence). Replacing~$Y$
  by its $\ell$th twist~$Y(\ell)$ yields a chain of $h_{[0]}$\nbd-equivalences
  \begin{equation}
    \label{eq:h0-eq}
    \theta_\ell Y[n] \rTo^\simeq \sigma Y(\ell) \lTo \psi_0 \circ \check \Gamma
    \circ \theta_\ell (Y) \ .
  \end{equation}
  Twisting by~$-\ell$ again leaves us with a chain of natural maps
  \[Y[n] \rTo^\simeq_\mu \theta_{-\ell} \big( \sigma Y(\ell) \big)
  \lTo_\nu \psi_{-\ell} \circ \check \Gamma \circ \theta_\ell (Y) \
  .\] We claim that {\it the maps~$\mu$ and~$\nu$ are
    $h_{[\ell]}$\nbd-equivalences}. This is clear for~$\mu$ since
  $\mu$~is in fact a weak equivalence. As for~$\nu$, given an
  integer~$j$ with $0 \leq j \leq \ell$ we have to prove that
  application of~$\check \Gamma \circ \theta_j$ to the map~$\nu$
  produces a quasi-isomorphism of $R$\nbd-module chain complexes. For
  $0 \leq j < \ell$ this is true since both source and target of the
  resulting map of chain complexes are acyclic. Indeed, $\check \Gamma
  \circ \theta_j \big( \theta_{-\ell} \big( \sigma Y(\ell) \big) \big)
  \simeq \check \Gamma Y(j)[n] \simeq 0$ since $Y$~is an object
  of~$\perf(P)^{[\ell-1]}$, while $\check \Gamma \circ \theta_j \circ
  \psi_{-\ell} \circ \check \Gamma \circ \theta_\ell (Y) = \check
  \Gamma \circ \psi_{j-\ell} \circ \check \Gamma \circ \theta_\ell (Y)
  \simeq 0$ by Lemma~\ref{lem:Gamma_of_psi_k}, applied to the chain
  complex $C = \check \Gamma \circ \theta_\ell (Y)$, since
  $E_P(j-\ell) = 0$. For $j = \ell$ note that $\theta_\ell (\nu)$ is
  the map $\sigma Y(\ell) \lTo \psi_0 \circ \check \Gamma Y(\ell)$
  which is an $h_{[0]}$\nbd-equivalence according to
  Lemma~\ref{lem:gamma_psi0_gamma} (applied to~$Y(\ell)$ instead
  of~$Y$).

  We have thus verified the claim. But this means that $n$th
  suspension and $\psi_{-\ell} \circ \check
  \Gamma \circ \theta_\ell$ induce homotopic maps on the
  $K$\nbd-theory space of $(\perf(P)^{[\ell-1]}, h_{[\ell]})$ so
  that $\check \Gamma \circ \theta_\ell$ induces a map of
  $K$\nbd-theory spaces which is injective on homotopy groups.

  This proves that $K(\perf(P)^{[\ell-1]}, h_{[\ell]}) \simeq K(R)$
  via the functor $\check \Gamma \circ \theta_\ell$. The resulting
  fibration sequence
  \[\tilde K(P,\, [\ell]) \rTo K(P,\, [\ell-1]) \rTo^{\check \Gamma
    \circ \theta_\ell} K(R)\] has a section up to homotopy and up to
  sign induced by~$\psi_{-\ell}$ as the composition $\check \Gamma
  \circ \psi_\ell \circ \psi_{-\ell}$ is weakly equivalent to $n$th
  suspension, just as argued above.
\end{proof}

We are now in a position to return to the main theorem of the paper:

\begin{main}
  Let $P \subseteq \bR^n$ be an $n$\nbd-dimensional lattice polytope,
  and let $n_P$ be the number of distinct integral roots of its
  \textsc{Ehrhart\/} polynomial~$E_P(x)$. Let $R$ be a ring with unit;
  suppose that $R$~is commutative, or else left \textsc{noether}ian.
  Then there is a homotopy equivalence of $K$\nbd-theory spaces
  \[K(P) \simeq K(R)^{1+n_P} \times K(P, [n_P])\] where $K(R)$~denotes
  the $K$\nbd-theory of the ring~$R$, and $K(P, [n_P])$ denotes the
  $K$\nbd-theory of those perfect complexes $Y \in \perf(P)$ which
  satisfy $\check \Gamma \big( Y(j) \big) \simeq 0$ for $0 \leq j \leq
  n_P$.  If $n_P = 0$ this expresses the tautological splitting $K(P)
  \simeq K(R) \times \tilde K(P)$ where $\tilde K(P) = K(P,\, [0])$.
\end{main}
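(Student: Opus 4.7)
The plan is to deduce the splitting by iterating the two fibration results already established, \emph{viz}.\ Proposition~\ref{prop:reduced_ktheory} and Proposition~\ref{prop:split_again}. All of the geometric, homological and combinatorial content has been absorbed into those two results; what remains is essentially bookkeeping.

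First, I would apply Proposition~\ref{prop:reduced_ktheory} to obtain a homotopy equivalence
\[K(P) \simeq K(R) \times \tilde K(P) = K(R) \times K(P, [0]) \ ,\]
together with the explicit section (up to homotopy and sign) given by~$\psi_0$. This disposes of the base case.

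Next, for each $\ell$ with $1 \leq \ell \leq n_P$, Proposition~\ref{prop:split_again} supplies a homotopy equivalence
\[K(P, [\ell-1]) \simeq K(R) \times K(P, [\ell]) \ ,\]
the section being induced (up to homotopy and sign) by $\psi_{-\ell}$ and the projection being induced by $\check \Gamma \circ \theta_\ell$. Note the hypothesis of Proposition~\ref{prop:split_again} is exactly that $E_P(-\ell)=0$, which holds in the range $1 \leq \ell \leq n_P$ by Corollary~\ref{cor:zeros_of_Ehrhart}. Combining the base case with a straightforward induction on~$\ell$ then yields
\[K(P) \simeq K(R) \times K(P,[0]) \simeq K(R)^{2} \times K(P,[1]) \simeq \ldots \simeq K(R)^{1+n_P} \times K(P,[n_P]) \ ,\]
which is the statement of the Theorem. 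The ``tautological'' case $n_P = 0$ is just the base case.

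There is no real obstacle at this stage; the hard work has already been carried out in \S\ref{sec:reduced-k} and in the proof of Proposition~\ref{prop:split_again} (where the Fibration Theorem is applied, the relevant functors $\psi_{-\ell}$ and $\check \Gamma \circ \theta_\ell$ are shown to be mutually inverse on the level of $K$\nbd-theory, and the acyclicity Lemma~\ref{lem:Gamma_of_psi_k} intervenes to guarantee that $\psi_{-\ell}$ lands in $\perf(P)^{[\ell-1]}$). The only point that warrants a sentence of comment is to identify $K(P,[n_P])$ with the category described in the statement, namely the full subcategory of those $Y \in \perf(P)$ with $\check \Gamma \big(Y(j)\big) \simeq 0$ for $0 \leq j \leq n_P$; but this is precisely the definition of $\perf(P)^{[n_P]}$ given at the start of \S\ref{sec:further-splitting}.
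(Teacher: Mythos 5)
Your proposal is correct and follows the paper's own argument exactly: concatenate the splitting from Proposition~\ref{prop:reduced_ktheory} with the iterated splittings from Proposition~\ref{prop:split_again} for $\ell = 1, \ldots, n_P$. The extra remarks you supply (checking the hypothesis $E_P(-\ell)=0$, identifying $K(P,[n_P])$ with the stated subcategory) are harmless elaboration of details the paper leaves implicit.
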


\begin{proof}
  This follows by concatenating the homotopy equivalences from
  Propositions~\ref{prop:reduced_ktheory} and~\ref{prop:split_again}
  for $\ell = 1,\, 2,\, \cdots,\, n_p$.
\end{proof}

\subsection{Algebraic $K$-theory of projective space}

\begin{theorem}
  \label{thm:k_of_Pn}
  Let $\Delta^n$ be an $n$\nbd-dimensional simplex with
  volume~$1/n!$. Then $n_{\Delta^n} = n$ and $K(\Delta^n, [n]) \simeq
  *$ so there is a homotopy equivalence
  \[K(\Delta^n) \simeq K(R)^{n+1} \ .\]
\end{theorem}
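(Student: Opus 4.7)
The plan is three-fold: (a) compute $n_{\Delta^n}$ from the \textsc{Ehrhart} polynomial of the standard simplex, (b) invoke Theorem~\ref{thm:main} to split off $n+1$ copies of $K(R)$, and (c) show the remaining factor $K(\Delta^n, [n])$ is contractible.

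For (a), after an integral affine change of coordinates I may assume $\Delta^n$ has vertices $0, e_1, \ldots, e_n \in \bZ^n$; then $k\Delta^n \cap \bZ^n = \{x \in \bZ_{\geq 0}^n : x_1 + \cdots + x_n \leq k\}$ has cardinality $\binom{n+k}{n}$, so the \textsc{Ehrhart} polynomial is
\[ E_P(x) = \binom{x+n}{n} = \frac{1}{n!}(x+1)(x+2) \cdots (x+n) \ , \]
whose distinct integer roots are precisely $-1, -2, \ldots, -n$. By Corollary~\ref{cor:zeros_of_Ehrhart} this gives $n_{\Delta^n} = n$, and Theorem~\ref{thm:main} then yields the splitting $K(\Delta^n) \simeq K(R)^{n+1} \times K(\Delta^n, [n])$.

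For step (c), I would show that every object $Y$ of the defining subcategory of $K(\Delta^n, [n])$ --- that is, every $Y \in \perf(\Delta^n)$ with $\check \Gamma \big(Y(j)\big) \simeq 0$ for $0 \leq j \leq n$ --- is already quasi-isomorphic to the zero presheaf. This collapses the category to a single weak-equivalence class and hence makes $K(\Delta^n, [n])$ contractible. The underlying input is the combinatorial analogue of \textsc{Beilinson}'s classical theorem that $\{\mathcal{O}(-n), \mathcal{O}(-n+1), \ldots, \mathcal{O}(0)\}$ is a full exceptional collection in the derived category of~$\mathbb{P}^n$; in our setting this takes the form that vanishing of $\check \Gamma \circ \theta_j (Y)$ for every $0 \leq j \leq n$ forces $Y \simeq 0$. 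I would establish this by constructing a \textsc{Koszul}-type resolution of a ``diagonal presheaf'' on $\Delta^n \times \Delta^n$ using the monoid algebras $A_F = R[S_F]$, tensoring with~$Y$, and reading off $Y$ from the resulting spectral sequence, which collapses to zero under the vanishing hypothesis.

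The main obstacle is making this \textsc{Beilinson}-style argument work combinatorially for a possibly non-commutative left \textsc{noether}ian ring~$R$. In the commutative case one may bypass the combinatorial construction by invoking Proposition~\ref{prop:KXPisKP} together with \textsc{Quillen}'s projective bundle formula $K(\mathbb{P}^n_R) \simeq K(R)^{n+1}$ and matching it against the splitting from Theorem~\ref{thm:main}. For a general left \textsc{noether}ian~$R$ the \textsc{Koszul} construction is the decisive step: one must verify that the resolution, built over the monoid algebras~$A_F$ (all of which are polynomial-like and free as $R$-modules), is exact and compatible with the structure maps attached to the face poset of $\Delta^n$, and that the resulting double complex behaves correctly under the functor $\check \Gamma$.
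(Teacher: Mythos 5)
Your steps (a) and (b) match the paper: you correctly compute the \textsc{Ehrhart} polynomial of the standard simplex, read off $n_{\Delta^n}=n$, and invoke Theorem~\ref{thm:main}. For step (c) you also correctly identify the crux --- that vanishing of $\check\Gamma\big(Y(j)\big)$ for $0\le j\le n$ must force $Y$ to be componentwise acyclic, i.e.\ the twists $\mathcal{O}(0),\dots,\mathcal{O}(n)$ ``generate'' --- and the paper itself explicitly draws this analogy to \textsc{Beilinson}'s exceptional collection.

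However, your proposed mechanism for proving it is a genuine gap. You propose to build a combinatorial \textsc{Koszul}-type resolution of a diagonal presheaf on $\Delta^n\times\Delta^n$, tensor with $Y$, and run a spectral sequence --- but none of this is constructed, and you yourself flag as unresolved precisely the steps that carry all the weight: defining the diagonal presheaf and the analogues of $\Omega^j(j)$ over the monoid algebras $A_F$, proving exactness, verifying compatibility with the face-poset structure maps, and checking behavior under $\check\Gamma$, all over a possibly non-commutative left \textsc{noether}ian ring. As stated this is a plan, not a proof, and the commutative-case fallback via \textsc{Quillen}'s projective bundle formula abandons the very generality that makes the theorem interesting.

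The paper's actual argument is different and considerably more elementary, avoiding any resolution of the diagonal. It inducts on the dimension $n$, reducing to the claim that all structure maps $Y^F\rTo Y^{0\vee F}$ (for $F$ a face not containing a fixed vertex $0$) are quasi-isomorphisms. The key input is the short exact sequence of presheaves $0\rTo Y(k)\rTo Y(k+1)\rTo Z(k+1)\rTo 0$, where the cokernel $Z(k+1)$ vanishes on faces containing $0$ and, on the opposite facet $\Delta^{n-1}$, restricts naturally to an object of $\hco(\Delta^{n-1})$ satisfying the same vanishing hypotheses (via the algebra isomorphism $A^{\Delta^n}_F/\mathcal{O}_{\Delta^n}(-1)^F\iso A^{\Delta^{n-1}}_F$ and the compatibility of twisting). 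The induction hypothesis then kills $Z(1)^F$, so $Y(0)^F\rTo Y(1)^F$ is a quasi-isomorphism; a colimit argument upgrades this to the structure map $Y^F\rTo Y^{0\vee F}$ being a quasi-isomorphism; symmetry of the simplex gives all structure maps, so $Y$ is weakly equivalent to a constant diagram, and $\check\Gamma Y\simeq Y^F[n]$ forces each $Y^F$ to be acyclic. If you want to pursue your Beilinson-style route you would need to actually carry out the construction; otherwise the paper's ``restriction to a facet'' induction is the path of least resistance.
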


Let us remark first that an $n$\nbd-dimensional simplex with
volume~$1/n!$ can be transformed, by integral translation and a linear
map in~$GL_n(\bZ)$, into the standard simplex with vertices
$0,\,e_1,\, \cdots,\, e_n \in \bR^n$. Up to isomorphism, the algebraic
data associated to~$\Delta^n$ does not change so that we may assume
$\Delta^n$ to be a standard simplex to begin with. Its
\textsc{Ehrhart\/} polynomial is $E_{\Delta^n}(x) = (x+1)(x+2) \cdots
(x+n)/n!$ which has precisely $n_{\Delta^n} = n$ integral roots.

In case of a commutative ring~$R$ we have $X_{\Delta^n} =
\mathbb{P}^n_R$, projective $n$\nbd-space over~$R$, and the splitting
of Theorem~\ref{thm:k_of_Pn} corresponds to the known splitting of
$K$\nbd-theory of projective $n$\nbd-space which in turn is a special
case of \textsc{Quillen\/}'s ``projective bundle'' theorem in
$K$\nbd-theory applied to the trivial vector bundle of rank~$n+1$ over
the affine scheme $\mathrm{Spec}\, R$ \cite[\S8.2]{Quillen-K}~\cite[4.1]{TT-K}.

\begin{proof}[Proof of~\ref{thm:k_of_Pn}]
  It is enough to prove the following assertion:
  \begin{assertion}
    \label{ass:generators}
    Let $Y \in \hco(\Delta^n)$ be such that $\check \Gamma \big(
    Y(\ell) \big)$ is acyclic for all~$\ell$ with $0 \leq \ell \leq
    n$. Then the chain complexes~$Y^F$ are acyclic for all $F \in
    F(\Delta^n)_0$.
  \end{assertion}
  \noindent For then the map $Y \rTo 0$ in~$\hco(\Delta^n)$ gives a
  weak equivalence of endo-functors of $\perf(P)^{[n]}$ from the
  identity to the zero functor. Consequently, the identity map of
  $K(\Delta^n,\, [n])$ is null homotopic so that $K(\Delta^n,\, [n])
  \simeq *$. The theorem now follows from the splitting
  result~\ref{thm:main} and the fact that $n_{\Delta^n} = n$.

  The above assertion roughly states that the sheaves
  $\mathcal{O}(k)$, $0 \leq k \leq n$, generate the derived category
  of~$\hco(P)$. This point of view has been pursued, in a model
  category context, in \cite[3.3.5]{H-colocal}. We sketch the argument
  for the reader's convenience.

  Suppose $Y \in \hco(\Delta^n)$ has the property that all the
  structure maps $Y^F \rTo Y^G$ are quasi-isomorphisms, for all pairs
  $F \subseteq G$ of non-empty faces of~$\Delta^n$. For fixed $F \in
  F(\Delta^n)_0$ the structure maps then induce a chain of
  quasi-isomorphisms of diagrams
  \[Y \rTo \con Y^{\Delta^n} \lTo \con Y^F \ .\] By
  Proposition~\ref{prop:cech_invariant} and Lemma~\ref{lem:susp_limit}
  we obtain quasi-isomorphisms of chain complexes of $R$\nbd-modules
  \[\check \Gamma Y \rTo^\simeq \check \Gamma \big( \con Y^{\Delta^n}
  \big) \lTo^\simeq \check \Gamma \big( \con Y^F \big) \lTo^\simeq
  Y^F[n] \ .\] So if in addition $\check \Gamma (Y) = \check \Gamma
  \big( Y(0) \big)$ is acyclic we know that~$Y^F[n]$ and hence~$Y^F$
  is acyclic as well. --- It is thus sufficient to prove the following
  claim:
  \begin{assertion}
    \label{ass:structure_maps}
    Let $Y \in \hco(\Delta^n)$ be such that $\check \Gamma \big(
    Y(\ell) \big)$ is acyclic for all~$\ell$ with $0 \leq \ell \leq
    n$. Then the structure maps $Y^F \rTo Y^G$ are quasi-isomorphisms
    for all pairs $F \subseteq G$ of non-empty faces of~$\Delta^n$.
  \end{assertion}
  It is in fact enough to consider structure maps of the form $Y^F
  \rTo Y^{v \vee F}$ for $F \in F(\Delta^n)_0$ and a vertex~$v$
  of~$P$.

  As remarked above, $\Delta^n$~is isomorphic to a standard
  $n$\nbd-simplex with vertices $0,\, e_1,\, \cdots,\, e_n \in \bR^n$;
  the isomorphism can be chosen to map any vertex of~$\Delta^n$
  to~$0$. In view of this symmetry it is enough to prove the
  following:
  \begin{assertion}
    \label{ass:special_structure_map}
    Suppose that $\Delta^n$ is a standard $n$\nbd-simplex, and suppose
    that $Y \in \hco(\Delta^n)$ is such that $\check \Gamma \big(
    Y(\ell) \big)$ is acyclic for all~$\ell$ with $0 \leq \ell \leq
    n$. Then for every face~$F$ of~$\Delta^n$ not containing~$0$ the
    structure map $Y^F \rTo Y^{0 \vee F}$ is a quasi-isomorphism.
  \end{assertion}
  This assertion is proved by
  induction on~$n$, the case $n=0$ being trivial as $\pre(\Delta^0) =
  \hco(\Delta^0) = \Ch (\RMod)$.

  So let $n > 0$. For every face~$F$ of~$\Delta^n$ there is an obvious
  inclusion of sets $kF+T_F \subseteq (k+1)F+T_F$, $0 \leq k < n$,
  which is an equality if and only if $0 \in F$. Hence we have
  corresponding maps $\mathcal{O}(k) \rTo \mathcal{O}(k+1)$ and $Y(k)
  \rTo Y(k+1)$ which are identities if $0 \in F$. We obtain short
  exact sequences in $\pre(\Delta^n)$
  \begin{equation}
    \label{eq:short_exact_sheaves}
    0 \rTo Y(k) \rTo Y(k+1) \rTo Z(k+1) \rTo 0
  \end{equation}
  where $Z(k+1)$ is, {\it a
    priori}, simply a name for the cokernel. However, since taking
  cokernels commutes with tensor products we see that $Z(k+1)$ is indeed
  the $k$th twist of $Z(1) = \mathrm{coker}\, \big( \mathcal{O}(0)
  \rTo \mathcal{O}(1) \big)$. As the functor $\check \Gamma$ preserves
  short exact sequences we conclude that $\check \Gamma Z(k+1)$ is
  acyclic for $0 \leq k < n$ since $\check \Gamma Y(k)$ and $\check
  \Gamma Y(k+1)$ are acyclic in this range by hypothesis.

  We now have to analyse the diagram~$Z(k+1)$ in more detail. If $0
  \in F$ the map $Y(k)^F \rTo Y(k+1)^F$ is the identity, as remarked
  above, so $Z(k+1)^F = 0$. --- Suppose now that $F$~is a face
  of~$\Delta^n$ with $0 \notin F$. There is in fact a maximal face
  $\Delta^{n-1}$ of~$\Delta^n$ not containing~$0$, and~$F$ is a face
  of~$\Delta^{n-1}$. We will argue that $Z(k+1)$, when restricted
  to~$F(\Delta^{n-1})_0$, is naturally an object
  of~$\hco(\Delta^{n-1})$ with $\check \Gamma_{\Delta^{n-1}} Z(k+1)$
  being acyclic for $0 \leq k < n$. By induction this implies:
  \begin{assertion}
    \label{ass:Z1_acyclic}
    The chain complex $Z(1)^F$~is acyclic for all $F \in
    F(\Delta^{n-1})_0$ and hence for all $F \in F(\Delta^n)_0$
  \end{assertion}
  
First let $\bR^{n-1}$ denote the affine hull of~$\Delta^{n-1}$,
  turned into a vector space by distinguishing a lattice point as
  origin. It comes equipped with its own integer lattice $\bZ^{n-1} =
  \bZ^n \cap \bR^{n-1}$. Let $F \in F(\Delta^{n-1})_0$. Then
  \[\big( ((k+1)F +T_F) \cap \bZ^n \big) \setminus \big((kF + T_F) \cap \bZ^n
  \big) = ((k+1)F + T^{\bR^{n-1}}_F) \cap \bZ^{n-1} \ ,\] the barrier
  cones on the left being computed in~$\bR^n$, the barrier cone on the
  right being computed in~$\bR^{n-1}$. This translates into an
  isomorphism
  \begin{equation}
    \label{eq:twisting_iso}
    \mathrm{coker}\, \big(\mathcal{O}_{\Delta^n} (k)^F \rTo
    \mathcal{O}_{\Delta^n} (k+1)^F \big) \iso \mathcal{O}_{\Delta^{n-1}}
    (k+1)^F \ .
  \end{equation}
  By considering $k = -1$ we obtain from this an algebra
  isomorphism
  \begin{equation}
    \label{eq:algebra_iso}
    A_F^{\Delta^n} / \mathcal{O}_{\Delta^n} (-1)^F =
    \mathcal{O}_{\Delta^n}(0)^F / \mathcal{O}_{\Delta^n} (-1)^F \iso
    A^{\Delta^{n-1}}_F
  \end{equation}
  and thus an algebra epimorphism $A^{\Delta^n}_F \rTo
  A^{\Delta^{n-1}}_F$. The isomorphism displayed
  in~(\ref{eq:algebra_iso}) is used to equip~$Z(k+1)^F$ with a natural
  $A_F^{\Delta^{n-1}}$\nbd-module structure while the
  isomorphism~(\ref{eq:twisting_iso}) is used to verify that twisting
  with respect to~$\Delta^{n}$ and~$\Delta^{n-1}$, respectively, is
  compatible. A straightforward $5$\nbd-lemma argument shows that
  $Z(k)$, when considered as an object of~$\pre(\Delta^{n-1})$, is
  indeed a homotopy sheaf. Finally, from the definition of \textsc{\v
    Cech\/} complexes it follows that the chain complexes $\check
  \Gamma_{\Delta^n} \big(Z(k+1)\big)$ and $\check
  \Gamma_{\Delta^{n-1}} \big( Z(k+1)|_{F(\Delta^{n-1})_0} \big)$ agree
  up to re-indexing by~$1$. In total, this means that $Z(k+1) \in \hco
  (\Delta^{n-1})$ satisfies the induction hypotheses as claimed. We
  have thus verified~\ref{ass:Z1_acyclic}.

  From the short exact sequence~(\ref{eq:short_exact_sheaves}),
  restricted to $F$\nbd-components, we infer that the map $\sigma_F
  \colon Y(0)^F \rTo Y(1)^F$ is a quasi-isomorphism of chain
  complexes, hence so is the map from $Y(0)^F$ to the colimit of the
  infinite sequence
  \[Y(0)^F \rTo_{\sigma_F}^\simeq Y(1)^F \rTo^\iso Y(0)^F
  \rTo_{\sigma_F}^\simeq Y(1)^F \rTo^\iso Y(0)^F \rTo^\simeq \cdots \
  .\] Here every second map is a fixed isomorphism between $Y(0)^F$
  and~$Y(1)^F$. It is not difficult to see that the colimit of this
  sequence is isomorphic to $A_{0 \vee F} \tensor_{A_F} Y^F$ (this
  follows from the fact that the cone $T_{0 \vee F}$ is obtained from
  the cone~$T_F$ by forming \textsc{Minkowski\/} sum with a single ray
  spanned by the negative of a vector in~$T_F \cap \bZ^n$). Now the
  composite
  \[Y^F \rTo A_{0 \vee F} \tensor_{A_F} Y^F \rTo Y^{0 \vee F}\] is a
  structure map of~$Y$, and both constituent maps are
  quasi-isomorphisms: the first by what we have just shown, the second
  by the stipulation that $Y$~be a homotopy sheaf. In total, we have
  verified that the structure map $Y^F \rTo Y^{0 \vee F}$ is a
  quasi-isomorphism. We are done.
\end{proof}


\begin{thebibliography}{H{\"u}t09b}

\bibitem[Bar08]{Barvinok-latticepoints}
Alexander Barvinok, \emph{Integer points in polyhedra}, Zurich Lectures in
  Advanced Mathematics, European Mathematical Society (EMS), Z\"urich, 2008.

\bibitem[Ful93]{Fulton:TVs}
William Fulton, \emph{Introduction to toric varieties}, Annals of Mathematics
  Studies, vol. 131, Princeton University Press, Princeton, NJ, 1993, The
  William H. Roever Lectures in Geometry.

\bibitem[Har77]{Ha-AG}
Robin Hartshorne, \emph{{Algebraic geometry}}, {Graduate Texts in Mathematics.
  52. New York-Heidelberg-Berlin: Springer-Verlag}, 1977.

\bibitem[H{\"u}t04]{H-finiteness}
Thomas H{\"u}ttemann, \emph{{Finiteness of total cofibres}}, {\it K}-Theory
  \textbf{31} (2004), 101--123.

\bibitem[H{\"u}t09]{H-nonlin}
\bysame, \emph{{$K$-theory of non-linear projective toric varieties}}, Forum
  Math. \textbf{21} (2009), no.~1, 67--100.

\bibitem[H{\"u}t10]{H-colocal}
\bysame, \emph{On the derived category of a regular toric scheme}, Geometriae
  Dedicata \textbf{148} (2010), 175--203. 

\bibitem[MS05]{MS-CombAlg}
Ezra Miller and Bernd Sturmfels, \emph{{Combinatorial commutative algebra}},
  {Graduate Texts in Mathematics 227. New York, NY: Springer}, 2005.

\bibitem[Qui73]{Quillen-K}
Daniel Quillen, \emph{{Higher algebraic {$K$}-theory. I.}}, {Algebr.
  {$K$}-Theory I, Proc. Conf. Battelle Inst. 1972, Lect. Notes Math. 341},
  1973, pp.~85--147.

\bibitem[Ros94]{Rosen-K}
Jonathan Rosenberg, \emph{{Algebraic K-theory and its applications}},
  {Graduate Texts in Mathematics. 147. New York, NY: Springer-Verlag}, 1994.

\bibitem[Sta86]{Stanley-popo}
Richard~P. Stanley, \emph{Two poset polytopes}, Discrete Comput. Geom.
  \textbf{1} (1986), no.~1, 9--23.

\bibitem[TT90]{TT-K}
R.W. Thomason and Thomas Trobaugh, \emph{{Higher algebraic {\it K}-theory of
  schemes and of derived categories}}, The {\textsc{Grothendieck}} Festschrift.
  A collection of articles written in honor of the 60th birthday of Alexander
  Grothendieck, Boston, MA: Birkh{\"a}user, 1990, pp.~247--435.

\bibitem[Wal85]{W-Ktheory}
Friedhelm Waldhausen, \emph{{Algebraic {$K$}-theory of spaces}}, {Algebraic
  and geometric topology, Proc. Conf., New Brunswick/USA 1983, Lect. Notes
  Math. 1126}, 1985, pp.~318--419.

\bibitem[Wei95]{Weibel-intro}
Charles~A. Weibel, \emph{{An introduction to homological algebra}}, {Cambridge
  Studies in Advanced Mathematics. 38. Cambridge: Cambridge Univ. Press}, 1995.

\end{thebibliography}

\providecommand{\bysame}{\leavevmode\hbox to3em{\hrulefill}\thinspace}
\providecommand{\MR}{\relax\ifhmode\unskip\space\fi MR }
\providecommand{\MRhref}[2]{%
  \href{http://www.ams.org/mathscinet-getitem?mr=#1}{#2}
}
\providecommand{\href}[2]{#2}

\bigskip
\tiny
\noindent (Version of paper: 16.09.2010, minor corrections 16.12.2010,
01.03.2011, 04.07.2011)

\end{document}